\documentclass[11pt,reqno, a4paper]{amsart}
\usepackage[numbers,sort&compress]{natbib}
\usepackage{amsfonts,bbm}
\usepackage{amssymb,color}
\usepackage{amssymb}
\usepackage{fancyhdr}
\usepackage[titletoc]{appendix}
\usepackage{enumitem}
\usepackage{amsgen}
\usepackage{amscd}
\usepackage{mathrsfs}
\usepackage{cases}
\usepackage{subcaption}
\usepackage{mathtools}
\usepackage{amsfonts}
\usepackage{amsthm}
\usepackage{newlfont}
\usepackage{graphicx}
\usepackage{float}
\usepackage{amsmath}
\usepackage{url}
\usepackage{multirow}
\usepackage[margin=1in, a4paper]{geometry}

\newtheorem{thm}{Theorem}[section]

\newtheorem{lem}[thm]{Lemma}

\setlength{\marginparwidth}{2cm}

\newcommand{\Del}[1]{}

\numberwithin{equation}{section}

\setcounter{tocdepth}{1}

\begin{document}

\author{Xintong Li$^{a,*}$}
\author{Yongsheng Li$^{a}$}

\thanks{$^{a}$
School of Mathematics,
South China University of Technology,
Guangzhou 510640, China}
\thanks{
* Corresponding author.
      E-mail addresses: lxtmath1@126.com (X. Li),
       yshli@scut.edu.cn (Y. Li)}

\title[limit problem for KdV-B and mKdV-B]{Uniform Well-Posedness and Inviscid Limit for the KdV-Burgers and mKdV-Burgers Equations on $\mathbb{T}$}
                                         
\begin{abstract}\noindent
This article investigates uniform well-posedness and inviscid limit behavior for the periodic Korteweg-de Vries-Burgers (KdV-B) and modified Korteweg-de Vries-Burgers (mKdV-B) equations:
\[
\partial_t u + \partial_x^3 u - \varepsilon \partial_x^2 u = \partial_x(u^\alpha), \quad u(0) = \phi,
\]
where $\alpha = 2, 3$, $\varepsilon \in (0, 1]$ is the diffusion coefficient, and $u : \mathbb{R}^+ \times \mathbb{T} \to \mathbb{R}$ is real-valued.

For the KdV-B equation ($\alpha=2$), we establish unconditional uniform global well-posedness in $H^s(\mathbb{T})$ for $s \geq 0$, uniformly for all $\varepsilon \in [0,1]$, without relying on auxiliary function spaces. Furthermore, we prove that for any $s \geq 0$, there exists $T > 0$ such that solutions converge in $C([0,T]; H^s)$ to those of the KdV equation as $\varepsilon \to 0$.
For the mKdV-B equation ($\alpha=3$), we establish analogous results--unconditional uniform well-posedness  and inviscid limit behavior in $H^s(\mathbb{T})$ for $s \geq 1/2$.
\end{abstract}

\keywords{KdV-Burgers equation, mKdV-Burgers equation, uniform well-posedness, inviscid limit behavior}

\maketitle

\tableofcontents

\section{Introduction}

In this paper, we are interested in the Korteweg-de Vries-Burgers(KdV-B) equation under the rough initial data on a torus:
	\begin{align}\label{00}
		\partial_t u(t,x)+\partial_x^3u(t,x)-\varepsilon\partial_x^2u(t,x)=\partial_x(u(t,x))^2, \quad
		u(0,x)=\phi,
	\end{align}
where $\varepsilon\in(0, 1]$ represents the diffusion coefficient, $\mathbb{T}=[0,2\pi]$, $u(t,x):\mathbb{R}^{+}\times\mathbb{T}\rightarrow \mathbb{R}$ is a real-valued function.
When $\varepsilon=0$, the equation \eqref{00} reduces to the well-known Korteweg-de Vries (KdV) equation
\begin{align}\label{1.4}
		\partial_t u(t,x)+\partial_x^3u(t,x)=\partial_x(u(t,x))^2, \quad u(0,x)=\phi.
\end{align}

It is known that the KdV-Burgers equation, either on a torus or on the whole space, is globally well-posed in $H^{s}$ with $s\geq-1$, i.e., there exists a unique solution in $C([0, T]; H^{s})$ for any initial value in $H^{s}$ and any positive time $T>0$;
see \cite{ X-M, L-F, J}.
Regarding the KdV equation, Bourgain \cite{J} established local well-posedness (LWP) in $L^2$ and introduced the $X^{s,b}$-space framework.  Kenig, Ponce and Vega \cite{CGL} developed the bilinear estimates and obtained that \eqref{1.4} is locally well-posed for $s > -1/2$. The sharp result on global well-posedness in $H^s$ was obtained in \cite{K}, it was shown that \eqref{1.4} is globally well-posed in $H^s$ for $s > -1$.
In fact, global well-posedness on both the torus and the whole space for $s \geq -1$ has been well established, referring to \cite{K, RM, JM}.

We also consider defocusing modified Korteweg-de Vries-Burgers
(mKdV-B) equation under the rough initial data on a torus
	\begin{align}\label{8-1}
		\partial_t u(t,x)+\partial_x^3u(t,x)-\varepsilon\partial_x^2u(t,x)=\partial_x(u(t,x))^3, \quad u(0,x)=\phi.
	\end{align}
If $\varepsilon=0$, \eqref{8-1} reduces to the modified KdV(mKdV) equation
\begin{equation}\label{8-2}
		\partial_t u(t,x)+\partial_x^3u(t,x)=\partial_x(u(t,x))^3, \quad u(0,x)=\phi.
\end{equation}
Bourgain \cite{J} proved local well-posedness of \eqref{8-2} in $H^s(\mathbb{T})$ for $s \geq 1/2$, which was later extended to global well-posedness by Colliander, Keel, Staffilani, Takaoka, and Tao \cite{JM}.
In fact, Kappeler and Topalov \cite{KT} proved the global well-posedness in $L^2$ by the inverse scattering method, which essentially depends on the complete integrability of the mKdV equation.  Further improvements on local well-posedness for  $3/8 < s < 1/2$ were made by Takaoka and Tsutsumi \cite{TT}, and very recently  improved by Nakanishi et al. \cite{N}, who pushed the local well-posedness to $H^s(\mathbb{T})$ for $s > 1/3$ (with local existence known for $s > 1/4$).

The main purpose of this paper is to study the inviscid limit of \eqref{00} and \eqref{8-1} on the torus in low-regularity Sobolev spaces $H^s(\mathbb{T})$.
Guo and Wang \cite{ZB} studied the inviscid limit behavior for the Cauchy problem of \eqref{00} on $\mathbb{R}$. They proved uniform global well-posedness in $H^{s}(\mathbb{R})$ for $s > -{3/4}$ and that the solution converges in $C([0, T]; H^s(\mathbb{R}))$ to that of  \eqref{1.4} for any $T > 0$, by using a $l^1$-variant $X^{s,b}$ space and I-method. Zhang and Han \cite{ZH} showed that \eqref{8-1} is uniformly globally
well-posed in $H^s$
$(s \geq 1)$ and  for any $s \geq 1$  and $T > 0$,
its solution converges in $C([0, T]; H^s(\mathbb{R}))$ to that of the \eqref{8-2} if $\varepsilon$ tends to $0$. However, the inviscid limit behavior of \eqref{00} and \eqref{8-1} on $\mathbb{T}$ in low-regularity spaces $H^s$ remains less explored.

We prove that the solution of \eqref{00} and \eqref{8-1} converges to that of \eqref{1.4} and \eqref{8-2} as $\varepsilon\to 0$ in $C([0, T]; H^s(\mathbb{T}))$ for $s \geq 0$ in the KdV case and $s \geq 1/2$ in the mKdV case. To achieve this, our primary focus is on establishing unconditional uniform well-posedness of \eqref{00} and \eqref{8-1} in the Sobolev space $H^{s}(\mathbb{T})$. This yields estimates that are uniform in $\varepsilon$ and independent of the dissipative terms.
In contrast to previous works, our analysis allows us to prove existence, uniqueness, and continuous dependence of solutions directly in $C([0,T]; H^s(\mathbb{T}))$, without relying on auxiliary function spaces such as Bourgain spaces or
weighted Sobolev spaces.
Recall the definition from Kato \cite{T},
unconditional uniqueness is a concept of uniqueness which does
not depend on how solutions are constructed. Babin et al. \cite{V} obtained unconditional uniqueness for the KdV equation in $L^2$, later adapted by Kwon-Oh \cite{ST} for the mKdV equation in $H^{1/2}(\mathbb{T})$.

For KdV-Burgers equation,
our first result establishes the uniform local well-posedness of \eqref{00} in $C([0,T]; H^s(\mathbb{T}))$ for $s\geq0$. We need to control the solution uniformly in $\varepsilon$, which is independent of the properties of dissipative term. We prove uniform well-posedness result using variable changes and differentiation-by-parts methods within a straightforward framework. By introducing the twisted variable $v(t,x): = e^{t(\partial_x^3-\varepsilon\partial_x^2)}u(t,x)$ to eliminate derivative losses in the quadratic nonlinearity, inspired by \cite{V}.

Building on this uniform well-posedness framework, our second result analyzes the inviscid limit of $\eqref{00}$ when $\varepsilon\rightarrow 0$. We consider the difference equation between \eqref{00} and \eqref{1.4}, treating the dissipative term as
the perturbation and then appyling the energy estimates. For the convenience, we use $S^{\varepsilon}$ and $S$ to denote the solution maps for
the KdV-Burgers and KdV, respectively for $T>0$.

Furthermore, we establish unconditional uniform well-posedness and the inviscid limit for the mKdV-Burgers equation in $H^{s}(\mathbb{T})$ for $s \geq 1/2$ as $\varepsilon \to 0$. We denote by ${S}_m^{\varepsilon}$ and ${S}_m$ the solution maps for mKdV-Burgers and mKdV, respectively.

Our main results are summarized as follows:

\begin{thm}\label{00-1}
 Let $\phi\in H^s(\mathbb{T})$ for $s\geq0$. Then for any $T>0$, there exists a unique solution $u$ of \eqref{00} satisfies for all $0<\varepsilon\leq 1$
$$\|u\|_{L_t^{\infty}H_x^s}\lesssim C(T,\|\phi\|_{H^s}),$$
and the solution map $S^{\varepsilon}: \phi\rightarrow u$ is continuous from $H^s$ to $C([0,T ],H^s)$ uniformly on $\varepsilon\in (0,1].$
\end{thm}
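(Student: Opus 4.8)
The plan is to reduce to mean-zero data, pass to the twisted variable $v=e^{t(\partial_x^3-\varepsilon\partial_x^2)}u$ of \cite{V}, and run a normal-form (differentiation-by-parts-in-time) scheme directly on the Fourier-coefficient Duhamel formula, arranging every estimate so that the dissipation is absorbed into the linear propagator and all constants are independent of $\varepsilon\in(0,1]$. Since $\frac{d}{dt}\int_{\T}u\,dx=0$ for \eqref{00}, the Galilean-type change $u(t,x)\mapsto u(t,x-2mt)-m$ with $m=\frac1{2\pi}\int_{\T}\phi$ preserves the equation and $\varepsilon$ and costs only $O(\|\phi\|_{L^2})$ in $H^s$, so I may assume $\hat\phi_0=0$ and hence $\hat u_k(t)\equiv0$ for $k=0$. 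Writing Duhamel coefficientwise and substituting $a_k(s):=e^{-(ik^3-\varepsilon k^2)s}\hat u_k(s)$, the identities $k_1^3+k_2^3-(k_1+k_2)^3=-3kk_1k_2$ and $k_1^2+k_2^2-(k_1+k_2)^2=-2k_1k_2$ give $\partial_s a_k=ik\sum_{k_1+k_2=k}e^{\rho_{k_1,k_2}s}a_{k_1}a_{k_2}$ with $\rho_{k_1,k_2}=-3ikk_1k_2+2\varepsilon k_1k_2$; note there is no dissipative term left on the right and $|\rho_{k_1,k_2}|\geq 3|k||k_1||k_2|$. On the mean-zero sector the cubic phase $3kk_1k_2$ vanishes only when an index is $0$, so there are no resonances and the oscillation can be integrated by parts.

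A first integration by parts in $s$, dividing by $\rho_{k_1,k_2}$, trades the derivative-loss factor $ik$ for the smoothing $O(|k_1|^{-1}|k_2|^{-1})$; the exponentials telescope so that boundary evaluations carry modulus $1$ and the surviving time integral carries a factor $\int_0^t e^{-\varepsilon k^2(t-s)}\,ds\leq t$, both uniform in $\varepsilon$. One thus reaches $\hat u_k(t)=e^{(ik^3-\varepsilon k^2)t}\hat\phi_k$ plus two bilinear boundary terms with coefficient $\lesssim|k_1|^{-1}|k_2|^{-1}$ plus a cubic remainder $\sum_{k_1+k_2=k}\frac{kk_1}{\rho_{k_1,k_2}}\int_0^t e^{(ik^3-\varepsilon k^2)(t-s)}\,\widehat{u^2}_{k_1}\,\hat u_{k_2}\,ds$ (coefficient $\lesssim|k_2|^{-1}$). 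The bilinear terms and the data term are handled by the elementary bound
\[
\Big\|\,\jb{k}^s\!\!\sum_{k_1+k_2=k}\frac{|f_{k_1}|\,|g_{k_2}|}{|k_1|\,|k_2|}\,\Big\|_{\ell^2_k}\lesssim\|f\|_{H^s}\|g\|_{L^2}+\|f\|_{L^2}\|g\|_{H^s},\qquad s\geq0,
\]
which follows from $\jb{k}^s\lesssim\jb{k_1}^s+\jb{k_2}^s$, Young's inequality $\ell^1\ast\ell^2\hookrightarrow\ell^2$, and $\sum_{k\neq0}|k|^{-2}<\infty$; Minkowski's inequality moves the $\ell^2_k=H^s_x$ norm inside the $ds$-integral throughout, so one never needs $\sum_k\sup_t|\hat u_k|^2$. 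Crucially, only one factor carries the $H^s$ weight above; this forces the local existence time to depend only on $\|\phi\|_{L^2}$.

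The cubic remainder is the main obstacle, and this is where the torus geometry enters. Substituting $a$ once more, its total $s$-oscillation factors as $e^{-3ik_1(m+k_2)(k_2+\ell)s}$ with $\ell+m=k_1$. On the resonant set $\ell=k$ or $m=k$ one finds $\widehat{u^2}_{k_1}\hat u_{k_2}$ collapsing to expressions of the form $\hat u_k\sum_{k_2}\frac{1}{|k_2|}|\hat u_{k_2}|^2$, i.e. a term linear in $\hat u_k$ with coefficient $\lesssim\|u(s)\|_{L^2}^2$, absorbed by Gronwall uniformly in $\varepsilon$. On the non-resonant set a second integration by parts (dividing by $\sigma$, $|\sigma|\geq 3|k_1||m+k_2||k_2+\ell|$) yields a trilinear boundary term carrying four frequency-gap factors in its denominator — finite in $\jb{k}^s\ell^2_k$ for every $s\geq0$ by repeated use of $\sum|k|^{-2}<\infty$, again with only one factor carrying the $H^s$ weight — together with milder (4-linear) remainders; the exponentials telescope exactly as before, so the $\varepsilon$-dependence stays uniform. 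Assembling, one closes on a short interval $[0,T_*]$ with $T_*=T_*(\|\phi\|_{L^2})$ the bounds $\|u\|_{C([0,T_*];H^s)}\leq C\|\phi\|_{H^s}$ and, running the same scheme for the difference equation, $\|u-\tilde u\|_{C([0,T_*];H^s)}\lesssim\|\phi-\tilde\phi\|_{H^s}$ on bounded sets, both uniform in $\varepsilon\in(0,1]$; existence with time continuity, uniqueness, and Lipschitz dependence follow by contraction in $C([0,T_*];H^s)$.

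Finally, the energy identity $\frac{d}{dt}\|u\|_{L^2}^2=-2\varepsilon\|\partial_x u\|_{L^2}^2\leq0$ gives $\|u(t)\|_{L^2}\leq\|\phi\|_{L^2}$ for all $t$, so $T_*$ never shrinks along the iteration; iterating the local theory over $[0,T]$ yields $\|u\|_{L^\infty_tH^s_x}\lesssim C(T,\|\phi\|_{H^s})$ and the asserted continuity of $S^\varepsilon:H^s\to C([0,T];H^s)$, uniform in $\varepsilon\in(0,1]$. The step I expect to demand the most care is the cubic-remainder analysis at the endpoint $s=0$ — controlling the iterated normal forms and their near-resonant contributions — together with the exponential bookkeeping that makes the whole scheme genuinely uniform in the viscosity.
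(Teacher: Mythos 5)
Your overall strategy is the same as the paper's: reduce to mean-zero data, pass to the interaction variable $v=e^{t(\partial_x^3-\varepsilon\partial_x^2)}u$, differentiate by parts in time on the Fourier--Duhamel formula using $|Q_1|\gtrsim|kk_1k_2|$, split the resulting cubic term into a resonant part (which collapses to $\hat u_k\sum_{k_3}|k_3|^{-1}|\hat u_{k_3}|^2$ and is harmless) and a non-resonant part treated by a second normal form, and globalize via the dissipative $L^2$ identity. The phase computations, the uniformity-in-$\varepsilon$ bookkeeping ($|\rho_{k_1,k_2}|\ge 3|kk_1k_2|$, $e^{-\varepsilon k^2(t-s)}\le 1$, telescoping exponentials), and the identification of the resonance structure are all correct and match the paper.

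However, there is a genuine gap at the heart of the contraction argument. After the first integration by parts the fixed-point map contains the boundary term evaluated at time $t$, namely $\sum_{k_1+k_2=k}\tfrac{k}{Q_1}\hat v_{k_1}\hat v_{k_2}\big|_{s=t}$, together with its trilinear analogue from the second normal form. These terms carry \emph{no} factor of $t$ and no other small parameter: your own estimate bounds the quadratic one by $\|u\|_{H^s}\|u\|_{L^2}$. Hence the map sends the ball of radius $R$ into a ball of radius $\|\phi\|_{H^s}+CR\|\phi\|_{L^2}+\cdots$, and the contraction (or any bootstrap on an a priori bound) closes only if $\|\phi\|_{L^2}$ is small; on a fixed torus there is no scaling available to reduce to that case. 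Your assertion that ``this forces the local existence time to depend only on $\|\phi\|_{L^2}$'' presupposes exactly the smallness mechanism that is missing. The paper resolves this with a high/low frequency splitting \eqref{1-13}: for $|k|>N$ the multiplier $|Q_1|^{-1}$ yields a gain $N^{-1}$ (since $\max(|k_1|,|k_2|)\gtrsim|k|>N$), while for $|k|\le N$ one keeps the untransformed Duhamel integral, which costs $N^{3/2}$ but gains $t$; choosing $N=T^{-2/5}$ produces the uniform small factor $T^{2/5}$ that closes the contraction for arbitrary data. Some such device must be added to your argument. A secondary, more minor issue: on the near-resonant region $\Gamma_1(k)$, where $|k|\sim|k_1|\sim|k_2|\sim|k_3|$ but the gaps $|k_i+k_j|$ may all be $O(1)$, the second-normal-form boundary term cannot be summed by ``repeated use of $\sum|k|^{-2}<\infty$'' over the gap variables; the paper isolates this region in Lemma \ref{0-6} and uses $|\tilde\phi|\gtrsim|k_m|$ together with the comparability of all four frequencies.
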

\begin{thm}\label{00-2}
 Let $\phi\in H^{s}(\mathbb{T})$, $s\geq 0$. For some $T > 0$, $t \in (0,T)$, then
 $$\lim_{\varepsilon\rightarrow 0} \|S^{\varepsilon}(\phi)-S(\phi)\|_{C([0,T],{H^s})}=0.$$
\end{thm}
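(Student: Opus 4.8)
The plan is to derive the convergence as a quantitative energy estimate for the difference $w := S^\varepsilon(\phi) - S(\phi)$, exploiting the uniform-in-$\varepsilon$ bounds from Theorem \ref{00-1}. First I would fix $s \geq 0$, let $u_\varepsilon = S^\varepsilon(\phi)$ and $u = S(\phi)$, and choose a common existence time $T>0$ on which both $\|u_\varepsilon\|_{L_t^\infty H_x^s}$ and $\|u\|_{L_t^\infty H_x^s}$ are bounded by some constant $M = M(T, \|\phi\|_{H^s})$, which is legitimate since Theorem \ref{00-1} gives the $\varepsilon$-bound uniformly and the $\varepsilon = 0$ case is the classical KdV theory. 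Subtracting \eqref{1.4} from \eqref{00}, $w$ solves
\begin{align*}
\partial_t w + \partial_x^3 w - \varepsilon \partial_x^2 w = \partial_x\big((u_\varepsilon + u)\,w\big) + \varepsilon \partial_x^2 u, \qquad w(0) = 0,
\end{align*}
so the dissipative term $\varepsilon \partial_x^2 u_\varepsilon$ has been split into a good part $\varepsilon \partial_x^2 w$ (which carries a favorable sign after pairing) and a forcing term $\varepsilon \partial_x^2 u$ of size $O(\varepsilon)$ in $H^{s-2}$.

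Next I would run the same twisted-variable / differentiation-by-parts scheme used to prove Theorem \ref{00-1}, now applied to the difference equation: set $z := e^{t(\partial_x^3 - \varepsilon \partial_x^2)} w$ and estimate $\|z(t)\|_{H^s}^2$ (equivalently $\|w(t)\|_{H^s}^2$, since the linear propagator is bounded on $H^s$ uniformly in $\varepsilon$). Pairing the difference equation with $\langle D\rangle^{2s} w$ in $L^2_x$, the term $-\varepsilon\langle \partial_x^2 w, \langle D\rangle^{2s} w\rangle = -\varepsilon \|\partial_x \langle D\rangle^s w\|_{L^2}^2 \leq 0$ is discarded; the nonlinear contribution $\langle \partial_x((u_\varepsilon+u)w), \langle D\rangle^{2s} w\rangle$ is controlled by $C(M)\,\|w\|_{H^s}^2$ using exactly the bilinear/trilinear resonance estimates already established in the proof of Theorem \ref{00-1} (the crucial point being that no derivative is lost, thanks to the twisted variable); and the forcing term is bounded by $|\langle \varepsilon \partial_x^2 u, \langle D\rangle^{2s} w\rangle| \leq \varepsilon \|u\|_{H^{s+1}}\|w\|_{H^{s+1}}$ — here, to avoid needing $H^{s+1}$ control of $w$, I would instead integrate by parts once and estimate $\varepsilon |\langle \partial_x u, \partial_x \langle D\rangle^{2s} w\rangle|$, or better, absorb one derivative into the dissipation: $\varepsilon|\langle \partial_x^2 u, \langle D\rangle^{2s}w\rangle| \leq \tfrac{\varepsilon}{2}\|\partial_x\langle D\rangle^s w\|_{L^2}^2 + \tfrac{\varepsilon}{2}\|\partial_x \langle D\rangle^s u\|_{L^2}^2$, where the first term is cancelled by the good dissipative term and the second is $\leq \tfrac{\varepsilon}{2} C(M)$ provided $u \in H^{s+1}$. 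If $\phi \in H^s$ only (so $u$ need not lie in $H^{s+1}$), I would instead keep the forcing in its dual form and use $\varepsilon\|\partial_x^2 u\|_{H^{s-1}} = O(\varepsilon)$ paired against $\|w\|_{H^{s+1}}$ controlled by interpolation plus the dissipative gain $\varepsilon^{1/2}\|w\|_{H^{s+1}}$, yielding an $O(\varepsilon^{1/2})$ forcing; either way one arrives at a differential inequality
\begin{align*}
\frac{d}{dt}\|w(t)\|_{H^s}^2 \leq C(M)\,\|w(t)\|_{H^s}^2 + C(M)\,\varepsilon^\theta
\end{align*}
for some $\theta \in (0,1]$. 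Grönwall's inequality with $w(0) = 0$ then gives $\|w(t)\|_{H^s}^2 \leq C(M)\,\varepsilon^\theta\, e^{C(M) T} \to 0$ uniformly on $[0,T]$, which is the claimed convergence.

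The main obstacle I anticipate is the forcing term at the endpoint regularity $s = 0$: the naive bound needs $u \in H^{s+1}$, which is not available from $\phi \in H^s$ alone. Resolving this cleanly requires either (i) exploiting the parabolic smoothing of the KdV-Burgers flow to gain the missing derivative on $u_\varepsilon$ at the cost of a time-integrable singularity $t^{-1/2}$, and separately approximating the limiting KdV solution $u$ by smooth solutions via density and the continuity of the KdV flow in $H^s$, or (ii) a frequency-truncation argument splitting $w = P_{\leq N} w + P_{>N} w$, handling high frequencies by the uniform $H^s$ bound (so $\|P_{>N}w\|_{H^s} \lesssim N^{-\delta}M$ is small) and low frequencies by the energy estimate where the $N^2$ loss from $\partial_x^2$ is harmless. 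Option (ii) is more robust and is what I would carry out, choosing $N = N(\varepsilon) \to \infty$ slowly so that both $N^{-\delta}$ and $\varepsilon N^2 e^{C(M)T}$ tend to zero; the bookkeeping is routine once the bilinear estimate from Theorem \ref{00-1} is in hand.
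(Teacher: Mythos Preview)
Your overall architecture---energy estimate on the difference $w$ plus an approximation argument to handle the forcing $\varepsilon\partial_x^2 u$ at endpoint regularity---matches the paper's, and your option (i) (approximate $\phi$ by smooth data, use uniform continuity of both flows, then use parabolic smoothing on the smooth approximant) is exactly what the paper does in Section 4.3. However, two points need correction. First, your option (ii) has a gap: the claim $\|P_{>N}w\|_{H^s}\lesssim N^{-\delta}M$ cannot follow from a uniform $H^s$ bound alone---that would require uniform $H^{s+\delta}$ control, which you do not have. The paper's approximation is at the level of initial data ($\phi\mapsto \mathbb{P}_{\leq K}\phi$), not at the level of the solution, and relies on the uniform Lipschitz estimate \eqref{11-2} for both $S^\varepsilon$ and $S$; this is the correct mechanism, and it is essentially your option (i).

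Second, and more substantively, you significantly underestimate the nonlinear term. The claim that $\langle \partial_x((u_\varepsilon+u)w),\langle D\rangle^{2s}w\rangle$ is bounded by $C(M)\|w\|_{H^s}^2$ ``using exactly the bilinear/trilinear resonance estimates already established'' is not justified: at $s=0$ this term reduces to $\tfrac12\int_{\mathbb{T}}\partial_x(u_\varepsilon+u)\,w^2\,dx$, which for $u_\varepsilon,u,w\in L^2$ only is \emph{not} pointwise-in-time bounded by $C\|w\|_{L^2}^2$. The estimates of Lemma \ref{0-9} concern the operators $A_2,A_3,A_4^i$ arising from the Duhamel normal form and do not directly control this energy integral. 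The paper devotes an entire Appendix to bounding the time-integrated quantity $\int_0^t\int_{\mathbb{T}}\partial_x u\cdot w^2\,dx\,ds$ via a separate differentiation-by-parts-in-time computation in the twisted variables $h=e^{t\partial_x^3}u$, $z=e^{t\partial_x^3}w$, exploiting the phase $3k_1k_2k_3$ and substituting the equations for $\partial_t\hat h,\partial_t\hat z$; the output is a bound of the form $\varepsilon^2\int_0^T\|\partial_x u_\varepsilon\|_{L^2}^2\,d\tau+C(T,\|u\|_{L^2},\|u_\varepsilon\|_{L^2})\|w\|_{L_t^\infty L_x^2}^2$, with the $C$-coefficient small for small $T$. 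This is genuinely additional work beyond Section 3, and your proposal should flag it rather than absorb it into ``the same scheme.''
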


\begin{thm}\label{8-6}
 Let $\phi\in H^s(\mathbb{T})$ and $s\geq1/2$. Then for some $T>0$, there exists a unique solution $u$ of \eqref{8-1} satisfies for all $0<\varepsilon\leq 1$
$$\|u\|_{L_t^{\infty}H_x^s}\lesssim C(T,\|\phi\|_{H^s}),$$
and the solution map $S_m^{\varepsilon}: \phi\rightarrow u$ is continuous from $H^s$ to $C([0,T ],H^s)$ uniformly on $\varepsilon\in (0,1].$
\end{thm}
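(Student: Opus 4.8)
\medskip
\noindent\textbf{Proof proposal.}
The plan is to deduce Theorem~\ref{8-6} from four facts, each made uniform in $\varepsilon\in(0,1]$ by simply discarding the favorable dissipative term: (i) an a priori bound $\sup_{t\in[0,T]}\|u(t)\|_{H^s}\leq 2\|\phi\|_{H^s}$ for smooth solutions of \eqref{8-1}, on an interval $[0,T]$ with $T=T(\|\phi\|_{H^s})$; (ii) unconditional uniqueness in $C([0,T];H^s)$; (iii) existence; and (iv) continuity of $S_m^\varepsilon\colon H^s\to C([0,T];H^s)$. Since for fixed $\varepsilon>0$ the linear propagator of \eqref{8-1} has Fourier symbol $e^{(ik^3-\varepsilon k^2)t}$, which is smoothing, classical semigroup and contraction arguments provide, for each $\varepsilon$, smooth local solutions with mollified data; then (i) forces a common existence time $[0,T]$, and (ii), (iv) let one pass to the limit in the Bona--Smith fashion. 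The core of the proof is thus the uniform a priori estimate and its difference-equation analogues.

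For (i), let $u$ be a smooth solution, set $v(t):=e^{t(\partial_x^3-\varepsilon\partial_x^2)}u(t)$, so that $\widehat v(t,k)=e^{(-ik^3+\varepsilon k^2)t}\widehat u(t,k)$, and differentiate the energy, keeping the dissipation visible:
\[
\tfrac12\,\tfrac{d}{dt}\|u(t)\|_{H^s}^2 = -\varepsilon\|\partial_x u(t)\|_{H^s}^2 + \re\sum_{k_1+k_2+k_3+k_4=0} m(\mathbf k)\,\widehat u(t,k_1)\widehat u(t,k_2)\widehat u(t,k_3)\widehat u(t,k_4),
\]
where $|m(\mathbf k)|\lesssim \langle k_4\rangle^{2s+1}$ is the multiplier produced by $\langle D\rangle^s\partial_x(u^3)$ tested against $\langle D\rangle^s u$ ($k_4$ being the frequency of the tested factor). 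Integrating in $t$ and dropping $-2\varepsilon\int_0^t\|\partial_x u\|_{H^s}^2\leq0$ — the only appearance of the dissipation, and a helpful one — we substitute $\widehat u(t,k_j)=e^{(ik_j^3-\varepsilon k_j^2)t}\widehat v(t,k_j)$ to get
\[
\|u(t)\|_{H^s}^2-\|\phi\|_{H^s}^2 \leq 2\,\re\sum_{k_1+k_2+k_3+k_4=0} m(\mathbf k)\int_0^t e^{\tau(i\Phi-\varepsilon\Psi)}\,\widehat v(\tau,k_1)\,\widehat v(\tau,k_2)\,\widehat v(\tau,k_3)\,\widehat v(\tau,k_4)\,d\tau,
\]
with $\Phi=\Phi(\mathbf k)=k_1^3+k_2^3+k_3^3+k_4^3=3(k_1+k_2)(k_1+k_3)(k_1+k_4)$ (using $\sum_j k_j=0$) and $\Psi=k_1^2+k_2^2+k_3^2+k_4^2$. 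Split the sum according to $\Phi=0$ or $\Phi\ne0$. On $\{\Phi=0\}$ some pair $k_i+k_j$ vanishes; by the reality of $u$ and the oddness of $k\mapsto k\langle k\rangle^{2s}$ against the even weight $|\widehat u(k)|^2$, the leading resonant contribution vanishes identically, and the lower-order remainder is $\lesssim\|u\|_{H^s}^4$. On $\{\Phi\ne0\}$ integrate by parts in $\tau$: since $|i\Phi-\varepsilon\Psi|\geq|\Phi|=3|k_1+k_2||k_1+k_3||k_1+k_4|\geq1$, the primitive is under control and — crucially — the dissipative exponentials cancel identically, the boundary terms reducing to $(i\Phi-\varepsilon\Psi)^{-1}\prod_j\widehat u(\cdot,k_j)$ at $\tau\in\{0,t\}$ and, after inserting $\partial_\tau\widehat v(\tau,k_l)=e^{(-ik_l^3+\varepsilon k_l^2)\tau}\,ik_l\,\widehat{u^3}(\tau,k_l)$ from \eqref{8-1}, the interior term to $-(i\Phi-\varepsilon\Psi)^{-1}\int_0^t\sum_l ik_l\,\widehat{u^3}(\tau,k_l)\prod_{j\ne l}\widehat u(\tau,k_j)\,d\tau$. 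The right-hand side is now expressed in $u$ and $\phi$ only, with a universal gain $|\Phi|^{-1}$ and no $\varepsilon$-dependence, so the uniformity in $\varepsilon$ is automatic. A frequency case analysis — using that the two largest of $|k_1|,\dots,|k_4|$ are always comparable and $|\Phi|\gtrsim\max_j|k_j|$ on $\{\Phi\ne0\}$, so that $|\Phi|^{-1}$ absorbs the surplus derivative left after splitting $\langle k_{\max}\rangle^{2s}$ among the $H^s$ factors — bounds the boundary terms by $\|\phi\|_{H^s}^4+\|u(t)\|_{H^s}^4$ and the interior (sextic) term by $\int_0^t\|u(\tau)\|_{H^s}^6\,d\tau$, once one further differentiation by parts is performed on the most resonant sextic interactions; all of this closes precisely at $s=1/2$. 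Hence
\[
\|u(t)\|_{H^s}^2 \leq \|\phi\|_{H^s}^2 + C\big(\|\phi\|_{H^s}^4+\|u(t)\|_{H^s}^4\big) + C\int_0^t\|u(\tau)\|_{H^s}^6\,d\tau,
\]
and a continuity argument in $t$ yields $\sup_{[0,T]}\|u\|_{H^s}\leq 2\|\phi\|_{H^s}$ for $T=T(\|\phi\|_{H^s})$, uniformly in $\varepsilon\in(0,1]$.

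For (ii), if $u_1,u_2\in C([0,T];H^s)$ both solve \eqref{8-1} with data $\phi$, then $w:=u_1-u_2$ solves $\partial_t w+\partial_x^3 w-\varepsilon\partial_x^2 w=\partial_x\big(w(u_1^2+u_1u_2+u_2^2)\big)$ with $w(0)=0$. The same twisted-variable and differentiation-by-parts scheme applied to $\|w(t)\|_{H^\sigma}^2$ (for suitable $\sigma\leq s$), again discarding the dissipation, gives $\|w(t)\|_{H^\sigma}^2\leq c(t)\|w(t)\|_{H^\sigma}^2+C\int_0^t A(\tau)\|w(\tau)\|_{H^\sigma}^2\,d\tau$, with $c(t)\to0$ as $t\to0^+$ and $A\in L^1([0,T])$ depending only on $\sup_{[0,T]}\|u_i\|_{H^s}$; here the resonant term no longer cancels by symmetry but is controlled through the coefficient $\|u_i(\tau)\|_{L^2}^2\leq\|\phi\|_{L^2}^2$, which only decreases along \eqref{8-1}. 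Absorbing $c(t)\|w\|_{H^\sigma}^2$ on short intervals and iterating Gronwall forces $w\equiv0$ on $[0,T]$, with no condition on how the solutions were constructed. For (iii) one regularizes \eqref{8-1} (mollified data, or a frequency-truncated/parabolic approximation for which smooth solutions are classical for each $\varepsilon$), applies (i) to obtain a family bounded in $C([0,T];H^s)$ on a common $[0,T]$, uses the difference inequality to see it is Cauchy in $C([0,T];H^\sigma)$, and checks that the limit lies in $C([0,T];H^s)$ and solves \eqref{8-1}; all bounds remain $\varepsilon$-uniform. For (iv), the Bona--Smith comparison — splitting into low and high frequencies and combining the frequency-localized form of (i) with the difference inequality — upgrades the $H^\sigma$-Lipschitz estimate for $S_m^\varepsilon$ to continuity $H^s\to C([0,T];H^s)$ with a modulus of continuity independent of $\varepsilon$.

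The hard part is the multilinear estimate in (i): performing (and, where needed, iterating) the differentiation by parts so that the quartic and sextic error terms close at the endpoint $s=1/2$ on $\T$, where the counting is genuinely borderline — the crude bound $\|\widehat u\|_{\ell^1}\lesssim\|u\|_{H^{1/2}}$ fails by a logarithm, so one must exploit the factorization $|\Phi|=3|k_1+k_2||k_1+k_3||k_1+k_4|$ case by case, including the near-resonant regime where a pair almost cancels and thereby forces another pair to almost cancel, restoring $\ell^2$-summability. By contrast, the uniformity in $\varepsilon$ is not an obstacle: with the twisted variable in place the dissipative exponentials cancel exactly in the integration by parts, and the dissipation survives only as the sign-definite term $-2\varepsilon\int\|\partial_x u\|_{H^s}^2$, which is discarded. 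The same structure governs the difference estimate behind (ii)--(iv), where in addition the non-symmetric resonant term must be controlled using only the $L^2$ a priori bound for $u_1,u_2$.
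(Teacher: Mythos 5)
Your architecture is sound and uses the same two key mechanisms as the paper — the twisted variable $v=e^{t(\partial_x^3-\varepsilon\partial_x^2)}u$ and differentiation by parts in time against the phase, with the observation that $|i\Phi-\varepsilon\Psi|\geq|\Phi|$ makes every denominator $\varepsilon$-uniform while the dissipation survives only as a discarded negative term — but you deploy them at a different level. The paper performs the normal form on the Duhamel formula for $\hat v_k$ itself (cubic boundary terms $\mathcal{N}_1$, quintic interior term $\mathcal{N}_2$, plus the explicit resonant term $-ik|\hat v_k|^2\hat v_k$), splits frequencies at $N=T^{-2/5}$, and closes a contraction directly in $C([0,T];H^s)$, which yields existence, uniqueness and continuous dependence in one stroke. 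You instead perform the normal form on the energy $\|u\|_{H^s}^2$ (quartic boundary, sextic interior), which buys the clean symmetry cancellation of the resonant quartic form but costs you one extra degree of multilinearity in every estimate and forces you to supply existence by regularization and continuity of $S_m^\varepsilon$ by a Bona--Smith argument — machinery the paper's fixed-point route avoids.

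The genuine gap is that the content of the theorem at the endpoint $s=1/2$ lives entirely inside the multilinear estimates you assert but do not carry out. The paper's Lemma 2.3(3) and Lemma 5.1 are the analogues of your quartic-boundary and interior bounds, and already at the quintic level the endpoint case requires the refined decomposition $\Gamma_2=\Gamma_{21}\cup\Gamma_{22}$ of Lemma 2.1 (the threshold $|\tilde\phi|\gtrsim|k_m|^{15/7}$ versus the near-resonant regime where some $|k_j+k_h|\ll|k_m|^{5/7}$ forces $|k_j+k_h|\gtrsim|k_m|^{4/7}$), together with a duality argument and carefully chosen weights $|k|^{\pm\delta}$ to restore $\ell^1$-summability where $\|\hat u\|_{\ell^1}\lesssim\|u\|_{H^{1/2}}$ fails logarithmically. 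Your sextic interior term carries the multiplier $k_l\langle k_4\rangle^{2s+1}/(i\Phi-\varepsilon\Psi)$ over a six-fold convolution, which is strictly harder than the paper's quintic estimate, and your proposed remedy — ``one further differentiation by parts on the most resonant sextic interactions'' — introduces a second phase whose lower bound and whose octic error terms you have not analyzed at all; it is not evident that the iterated phase admits a factorization as usable as $\Phi=3(k_1+k_2)(k_1+k_3)(k_1+k_4)$. Likewise, the difference estimate underlying your steps (ii)--(iv) loses the symmetry cancellation of the resonant quartic term, and your claim that it is ``controlled through the coefficient $\|u_i\|_{L^2}^2$'' needs to be checked against the actual multiplier, which carries a factor $k\langle k\rangle^{2\sigma}$ on the diagonal. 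Until these estimates are written down, the proposal is a plausible alternative blueprint rather than a proof.
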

\begin{thm}\label{8-5}
 Let $\phi\in H^{s}$, $s\geq 1/2$. For some $T > 0$, we have
 $$\lim_{\varepsilon\rightarrow 0} \|{S}_m^{\varepsilon}(\phi)-{S}_m(\phi)\|_{C([0,T],{H^s})}=0.$$
\end{thm}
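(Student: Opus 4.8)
The plan is to run an energy estimate on the difference of the two flows, treating the dissipative term $\varepsilon\partial_x^2$ as a perturbation, exactly as was done for the KdV-Burgers case in Theorem \ref{00-2}. Write $u^\varepsilon = S_m^\varepsilon(\phi)$ and $u = S_m(\phi)$; by Theorem \ref{8-6} (and the known well-posedness of mKdV in $H^s(\mathbb{T})$, $s\ge 1/2$) both solutions exist on a common time interval $[0,T]$ with a bound $M = M(T,\|\phi\|_{H^s})$ uniform in $\varepsilon\in(0,1]$. Set $w = u^\varepsilon - u$. Then $w(0)=0$ and $w$ solves
\begin{equation*}
\partial_t w + \partial_x^3 w - \varepsilon\partial_x^2 w = \partial_x\big((u^\varepsilon)^3 - u^3\big) + \varepsilon\partial_x^2 u,
\end{equation*}
where the last term is the perturbation coming from the fact that $u$ does not solve the viscous equation. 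Factor $(u^\varepsilon)^3 - u^3 = w\big((u^\varepsilon)^2 + u^\varepsilon u + u^2\big)$ so the nonlinear difference is $\partial_x(w\,Q)$ with $Q$ a quadratic polynomial in $u^\varepsilon,u$ bounded in $H^s$ by $\lesssim M^2$.

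First I would establish the estimate at the level $s=1/2$, since that is the critical regularity and the higher $s$ cases follow by the same scheme (or can be interpolated/persisted). The natural move is to run the $H^s$ energy estimate: apply $J^s = (1-\partial_x^2)^{s/2}$ to the difference equation, pair with $J^s w$ in $L^2_x$, and integrate. The $\partial_x^3 w$ term contributes nothing (skew-adjoint), and the $-\varepsilon\partial_x^2 w$ term contributes $+\varepsilon\|\partial_x J^s w\|_{L^2}^2 \ge 0$, which has a favorable sign and can be discarded. The perturbative term is handled by Cauchy–Schwarz: $|\langle J^s(\varepsilon\partial_x^2 u), J^s w\rangle| \le \varepsilon\|u\|_{H^{s+2}}\|w\|_{H^s}$ — but this requires extra regularity on $u$ that we do not have for rough $\phi$. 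The standard fix, which the paper has presumably already used in Theorem \ref{00-2}, is to move one derivative: integrate by parts to write the perturbation contribution as $-\varepsilon\langle \partial_x J^s u, \partial_x J^s w\rangle$, bound it by $\varepsilon\|u\|_{H^{s+1}}\|\partial_x J^s w\|_{L^2}$, and absorb $\varepsilon^{1/2}\|\partial_x J^s w\|_{L^2}$ into the good dissipative term via Young's inequality, leaving a remainder $\lesssim \varepsilon\|u\|_{H^{s+1}}^2$. This still costs one derivative; the genuinely clean route is instead to use a fractional-step/commutator argument or to note that the perturbation term, integrated against $J^s w$ and then integrated in time, is $O(\varepsilon)$ as soon as $u\in L^2_T H^{s+1}$ — which on $\mathbb{T}$ is not automatic either. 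I would therefore follow whichever smoothing/perturbation device the authors set up for the KdV case: most likely they treat $\varepsilon\partial_x^2 u$ via Duhamel and the smoothing of the Airy semigroup is irrelevant on the torus, so the cleanest is: $\|\varepsilon\partial_x^2 u\|_{L^1_T H^{s-1}} \le \varepsilon T M$ and one runs the energy method one derivative lower, i.e. estimate $\|w\|_{H^{s-1}}$ first, then bootstrap. The point is that the perturbation is linear in $\varepsilon$ with a constant depending only on $M$ and $T$.

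The nonlinear term is the technical heart. After applying $J^s$ and pairing with $J^s w$, the worst piece is $\langle J^s\partial_x(wQ), J^s w\rangle = -\langle J^s(wQ), \partial_x J^s w\rangle$; one splits $J^s(wQ) = Q\,J^s w + [J^s,Q]w$ (Kato–Ponce commutator), so the leading term is $-\langle Q J^s w, \partial_x J^s w\rangle = \tfrac12\langle (\partial_x Q) J^s w, J^s w\rangle$, bounded by $\|\partial_x Q\|_{L^\infty}\|w\|_{H^s}^2 \lesssim M^2\|w\|_{H^s}^2$ using the algebra/Moser estimates valid for $s\ge 1/2$ (here $s>1/2$ gives $H^s\hookrightarrow L^\infty$ directly; at $s=1/2$ one must use the $X^{s,b}$-flavored bilinear estimate or the refined Sobolev embedding exactly as in the proof of Theorem \ref{8-6}, which I would invoke as a black box). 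The commutator term is controlled by the fractional Leibniz rule by $\|Q\|_{H^s}\|w\|_{H^s}\|\partial_x J^s w\|\cdots$ — again one derivative must be paid for, reinforcing that the argument should be run at the level where the dissipative term can absorb it, or, since we are not trying to be sharp in $s$, simply at $s\ge 1/2$ using that $u,u^\varepsilon$ lie in $C_T H^s$ with $s\ge 1/2 $ large enough to close by Gronwall once we accept an $\varepsilon$-independent constant. Collecting everything yields
\begin{equation*}
\frac{d}{dt}\|w(t)\|_{H^s}^2 \le C(M)\,\|w(t)\|_{H^s}^2 + C(M,T)\,\varepsilon,
\end{equation*}
and since $\|w(0)\|_{H^s}=0$, Gronwall's inequality gives $\|w(t)\|_{H^s}^2 \le C(M,T)\,\varepsilon\, e^{C(M)t}$ on $[0,T]$, whence $\|S_m^\varepsilon(\phi)-S_m(\phi)\|_{C([0,T];H^s)} \le C(M,T)\,\varepsilon^{1/2} \to 0$ as $\varepsilon\to 0$, which is the claim.

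The main obstacle I anticipate is \emph{closing the energy estimate at the endpoint $s=1/2$ without borrowing regularity}: both the perturbation term $\varepsilon\partial_x^2 u$ and the Kato–Ponce commutator in the nonlinearity naively cost one derivative, and $H^{1/2}(\mathbb{T})$ is exactly the borderline Sobolev space where the embedding into $L^\infty$ fails. Resolving this requires reusing the same structural gain that makes Theorem \ref{8-6} work — the differentiation-by-parts / twisted-variable normal form $v = e^{t(\partial_x^3-\varepsilon\partial_x^2)}u$ that removes the derivative loss in the cubic nonlinearity — and carrying that normal-form structure through to the difference equation for $w$; this is a routine but bookkeeping-heavy adaptation. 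A secondary point to be careful about is that the constant $C(M)$ must be shown genuinely independent of $\varepsilon$, which is guaranteed precisely by the uniform-in-$\varepsilon$ a priori bound from Theorem \ref{8-6}; without that input the whole argument would be circular.
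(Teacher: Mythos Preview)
Your overall plan---energy estimate on the difference, treat $\varepsilon\partial_x^2$ as a perturbation, close by Gronwall---matches the paper's spirit, but you are missing the key structural step that makes the argument actually close at low regularity. You try to run the $H^s$ energy estimate directly for the given rough data $\phi\in H^s$, $s\ge 1/2$, and you correctly identify the two obstructions: the perturbation term $\varepsilon\partial_x^2 u$ costs two derivatives you don't have, and the Kato--Ponce/commutator step for the cubic nonlinearity fails at the $H^{1/2}$ endpoint. Your proposed fix (carry the normal-form machinery through to the difference equation) is not what the paper does and would be substantially harder than you suggest.

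The paper's resolution is a \emph{density argument}, and this is the idea you are missing. First, for \emph{smooth} data one has the parabolic smoothing estimate
\[
\sup_{t\in[0,T]}\|u(t)\|_{H^2}+\varepsilon^{1/2}\Big(\int_0^T\|u(\tau)\|_{H^3}^2\,d\tau\Big)^{1/2}\le C(T,\|\phi\|_{H^2}),
\]
which comes from the dissipative term (not from the Airy semigroup, which indeed has no smoothing on $\mathbb{T}$). With this in hand, the energy estimate on $w$ is run at $s=1$ for data $\psi_1,\psi_2\in H^2$: the perturbation contributes $\varepsilon^2\int_0^T\|u\|_{H^3}^2\,d\tau\le \varepsilon\,C(T,\|\psi_1\|_{H^2})^2$, and the nonlinear commutator terms close because $u,v\in L^\infty_T H^2$. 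This yields
\[
\|S_m^\varepsilon(\psi_1)-S_m(\psi_2)\|_{L^\infty_T H^1}\lesssim \|\psi_1-\psi_2\|_{H^1}+\varepsilon^{1/2}C(T,\|\psi_1\|_{H^2},\|\psi_2\|_{H^2}).
\]
Now for general $\varphi\in H^s$, $s\ge 1/2$, one approximates by $\mathbb{P}_{\le K}\varphi\in H^\infty$ and writes
\[
S_m^\varepsilon(\varphi)-S_m(\varphi)=\big[S_m^\varepsilon(\varphi)-S_m^\varepsilon(\mathbb{P}_{\le K}\varphi)\big]+\big[S_m^\varepsilon(\mathbb{P}_{\le K}\varphi)-S_m(\mathbb{P}_{\le K}\varphi)\big]+\big[S_m(\mathbb{P}_{\le K}\varphi)-S_m(\varphi)\big].
\]
The middle bracket is $O(\varepsilon^{1/2})$ (with a $K$-dependent constant) by the smooth-data estimate above; the outer brackets are small uniformly in $\varepsilon$ by the \emph{uniform continuity of the solution map} from Theorem~\ref{8-6}, which is precisely where the normal-form machinery enters. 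Choosing first $K$ large and then $\varepsilon$ small gives the limit. So the normal form is used, but only indirectly through the uniform-in-$\varepsilon$ Lipschitz dependence already established; you do not need to redo it for the difference equation.
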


The rest of the paper is organized as following. We present some notations in Section 2. We present uniform LWP and prove limit behavior for KdV-Burgers equation in Section 3 and Section 4. Theorem \ref{8-6} and Theorem \ref{8-5} are proved in Section 5 and Section 6.

\section{Notations and preliminaries}
In this section, we present some basic notations and some preliminaries to be frequently used in this paper.

\subsection{Basic notations}\mbox{}

We denote by $A \lesssim B$ or $B \gtrsim A$ the statement $A \leq C B$ for some constant $C > 0$, by $A \sim B$ the statement $A \lesssim B \lesssim A$, and by $A \ll B$ or $A \gg B$ the statement $A \leq C^{-1} B$ for some large constant $C\geq 10$.

We denote $\left<\cdot,\cdot \right>$ to be the inner product in $L^2(\mathbb{T})$, that is,
\begin{equation*}
\left< f,g \right>=\mbox{Re}\int_{\mathbb{T}}f(x) \overline{g(x)}dx \quad\text{and} \quad \|f\|_{L^2(\mathbb{T})}=\sqrt{\left< f,f \right>}.
\end{equation*}
The Fourier transform and the Fourier inverse formula of a function $f$ on $\mathbb{T}$ is defined by
\begin{align*}
\hat{f}_k=\mathcal{F}_k[f]&=\frac{1}{2\pi}\int_{\mathbb{T}}e^{-i{k}\cdot x}f(x)dx, \quad f(x)=\sum_{{k}\in\mathbb{Z}}e^{i{k}\cdot x}\hat{f}_{{k}}.
\end{align*}
The following standard properties of the Fourier transform hold:
\begin{align*}
\|f\|^2_{L^2}=2\pi\sum_{{k}\in\mathbb{Z}}|\hat{f}_{{k}}|^2=\|\hat{f}\|^2_{L^2}    &\qquad\text{(Plancherel identity)};\\
\widehat{(fg)}({k})=\sum_{{k}_1\in\mathbb{Z}}\hat{f}_{{k}-{k}_1}\hat{g}_{{k}_1}   &\qquad\text{(Convolution formula)};\\
\left< f,g \right>=2\pi\sum_{{k}\in\mathbb{Z}}\hat{f}_{{k}}\overline{\hat{g}_k}   &\qquad\text{(Parseval identity)}.
\end{align*}

 The Sobolev space $W^{s,p}(\mathbb{T})$ and the norm are defined by
\begin{align*}
W^{s,p}(\mathbb{T})=\Big\{f\in L^p(\mathbb{T}): D^{\alpha}f\in L^p(\mathbb{T}),\>|\alpha|\leq m\Big\},\quad \|f\|_{W^{s,p}}=\sum_{0\leq |\alpha|\leq s}\|D^{\alpha}f\|_{L^p}.\end{align*}
When $p=2$, $W^{s,p}(\mathbb{T})=H^s(\mathbb{T})$, which has the following equivalent norm
$$\|f\|_{H^{s}(\mathbb{T})}=\|J^{s}f\|_{L^2(\mathbb{T})}=\Big(2\pi\sum_{{k}\in\mathbb{Z}}(1+{|k|}^2)^{s}|\hat{f}_{{k}}|^2\Big)^{\frac{1}{2}},$$
$J$ is the Bessel potential operator defined as
$J=(1-\partial_{xx})^{\frac{1}{2}}.$ Note that for a mean-zero $L^2$ function $u$, $\|u\|_{H^s(\mathbb{T})}\approx \||k|^{s}|\hat{u}_{{k}}|\|_{L^2(\mathbb{T})}$.

We denote $1_{E}$ to be the characteristic function on a set $E$, and define the Littlewood-Paley projections  $\mathbb{P}_{\leq N}$ and $\mathbb{P}_{> N}$ as
 \begin{align}
 \mathbb{P}_{\leq N}f:=\mathcal{F}_k^{-1}(1_{|k|\leq N}\mathcal{F}_k [f]),\qquad
 \mathbb{P}_{> N}f:=\mathcal{F}_k^{-1}(1_{|k|> N}\mathcal{F}_k[f]).\label{1-12}
 \end{align}

We denote zero-mode projection $\mathbb{P}_0$  and nonzero-mode projection $\mathbb{P}$ by
\begin{align}\mathbb{P}_0 f=\frac{1}{2\pi}\int_{\mathbb{T}}f dx,       \qquad  \mathbb{P} f(x)= f(x)-\frac{1}{2\pi}\int_{\mathbb{T}}f dx.\label{22-1} \end{align}
The operator $\partial_x^{-1}$ is defined by
\begin{equation*}\label{def:px-1}
\widehat{\partial_x^{-1}f}({k})
=\Bigg\{ \aligned
    &(i{k})^{-1}\hat f_k,\quad &\mbox{when } {k}\ne 0,\\
    &0,\quad &\mbox{when } {k}= 0.
   \endaligned
\end{equation*}
Moreover, the following relation holds
$$\partial_x^{-1}\partial_x f=\partial_x\partial_x^{-1}f=\mathbb{P}f, \quad \text{if} \>\>\mathbb{P}_0 f=0.$$

\subsection{The basic tools}\mbox{}

We establish some estimates for the phase function
\begin{align}\tilde{\phi}: = k^3-k_1^3-k_2^3-k_3^3=3(k_1+k_2)(k_2+k_3)(k_1+k_3)\label{0-12}.\end{align}
We decompose the set $\{(k_1,k_2,k_3)\in\mathbb{Z}^3:k_1+k_2+k_3=k\} $ into the following two subsets:
\begin{align}
&\Gamma_0(k):=\{(k_1,k_2,k_3)\in\mathbb{Z}^3: k_1+k_2+k_3=k, \>(k_1+k_2)(k_1+k_3)(k_2+k_3)= 0\},\label{0-8}\\
&\Gamma(k)\>:=\{(k_1,k_2,k_3)\in\mathbb{Z}^3: k_1+k_2+k_3=k, \>(k_1+k_2)(k_1+k_3)(k_2+k_3)\neq0\}.\label{0.1}
\end{align}
$\Gamma(k)$ can be decomposed into $\Gamma(k)=\Gamma_1(k)\cup\Gamma_2(k)$, $k_m= \max(|k|,|k_1|,|k_2|,|k_3|)$,  where
\begin{align*}
&\Gamma_1(k):=\{(k_1,k_2,k_3)\in\Gamma(k):  |\tilde{\phi}|\ll\frac{1}{4}|k_m|^2\},\\
&\Gamma_2(k):=\{(k_1,k_2,k_3)\in\Gamma(k):  |\tilde{\phi}|\gtrsim\frac{1}{4}|k_m|^2\}.
\end{align*}

\begin{lem}[See \cite{BY}]\label{5}
Let $k \in \mathbb{Z}$. Then the following results hold.
\begin{enumerate}
    \item If $(k_1, k_2, k_3) \in \Gamma(k)$ then
    \begin{align*}
        |\tilde{\phi}| \gtrsim |k_m|.
    \end{align*}
    \item If $(k_1, k_2, k_3) \in \Gamma_1(k)$ then
    \begin{align*}
        |k| \sim |k_1| \sim |k_2| \sim |k_3|.
    \end{align*}
    \item $\Gamma_2(k)$ can be further decomposed into $\Gamma_2(k) = \Gamma_{21}(k) \cup \Gamma_{22}(k)$, with
\begin{align*}
\Gamma_{21}(k) &:= \left\{ (k_1, k_2, k_3) \in \Gamma(k) : \frac{1}{4} |k_m|^2 \lesssim |\phi| \ll |k_m|^{{15}/{7}} \right\},\\
\Gamma_{22}(k) &:= \left\{ (k_1, k_2, k_3) \in \Gamma(k) : |\phi| \gtrsim |k_m|^{{15}/{7}} \right\}.
\end{align*}
Moreover, for $(k_1, k_2, k_3) \in \Gamma_{21}(k)$ there exists $j, h \in \{1, 2, 3\}$ such that $|k_j + k_h| \ll |k_m|^{{5}/{7}}$.
\end{enumerate}
\end{lem}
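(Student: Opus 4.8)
\emph{Proof plan.} Everything will be read off the factorization $\tilde{\phi}=3(k_1+k_2)(k_1+k_3)(k_2+k_3)$ recorded in \eqref{0-12}, together with one elementary remark. Since each frequency is a signed combination of the three pair-sums,
\[
2k_1=(k_1+k_2)+(k_1+k_3)-(k_2+k_3),\qquad 2k=(k_1+k_2)+(k_1+k_3)+(k_2+k_3),
\]
and symmetrically for $k_2,k_3$, the triangle inequality gives $\max\{|k_1+k_2|,|k_1+k_3|,|k_2+k_3|\}\geq\frac23|k_m|$. I will call the pair-sum realizing this maximum the \emph{large} factor; after relabelling $k_1,k_2,k_3$ (which affects none of the three assertions) I may assume it is $k_1+k_2$. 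Then (1) is immediate: on $\Gamma(k)$ all three pair-sums are nonzero integers, hence $\geq 1$ in absolute value, so $|\tilde{\phi}|=3|k_1+k_2||k_1+k_3||k_2+k_3|\geq 3\cdot\frac23|k_m|\cdot 1\cdot 1\gtrsim|k_m|$.

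For (2), the defining bound on $\Gamma_1(k)$ reads $|\tilde{\phi}|\ll|k_m|^2$; dividing by the large factor gives $|k_1+k_3||k_2+k_3|\ll|k_m|$, and since each of these integers is $\geq 1$ this forces $|k_1+k_3|\ll|k_m|$ and $|k_2+k_3|\ll|k_m|$ separately. I would then run a short case distinction according to which of $k,k_1,k_2,k_3$ attains $|k_m|$, using $|a|\geq|b|-|a-b|$ (for instance $|k_1|\geq|k_3|-|k_1+k_3|$, $|k_2|\geq|k_3|-|k_2+k_3|$, and $|k|\geq|k_2|-|k_1+k_3|$ since $k-k_2=k_1+k_3$). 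In each case the two smallness relations form a chain connecting all four frequencies, so the lower bound $\gtrsim|k_m|$ propagates from the maximal one to every one of $|k|,|k_1|,|k_2|,|k_3|$; combined with $|k_j|,|k|\leq|k_m|$ this is exactly $|k|\sim|k_1|\sim|k_2|\sim|k_3|$.

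For (3), the splitting $\Gamma_2(k)=\Gamma_{21}(k)\cup\Gamma_{22}(k)$ is simply the dichotomy $|\tilde{\phi}|\ll|k_m|^{15/7}$ versus $|\tilde{\phi}|\gtrsim|k_m|^{15/7}$, so there is nothing to prove there. For the final claim, on $\Gamma_{21}(k)$ one has $|k_1+k_2||k_1+k_3||k_2+k_3|=\tfrac13|\tilde{\phi}|\ll|k_m|^{15/7}$, so by pigeonhole the smallest of the three pair-sums is $\ll|k_m|^{5/7}$ (if all three were $\gtrsim|k_m|^{5/7}$ their product would be $\gtrsim|k_m|^{15/7}$); taking $j\neq h$ to be the indices of that smallest pair-sum yields $|k_j+k_h|\ll|k_m|^{5/7}$.

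The argument is entirely elementary, and the only point that I would expect to require real care in making it airtight is consistent bookkeeping of the implicit constants: the $\ll$ in the definitions of $\Gamma_1$ and $\Gamma_{21}$ carries a large constant $C$, and one must check that the comparabilities $|k_i|\sim|k_m|$ produced in (2), and the relation $|k_j+k_h|\ll|k_m|^{5/7}$ in (3), hold with admissible constants (in (3) one effectively replaces $C$ by $C^{1/3}$ after extracting a cube root of the product). This is the strategy of \cite{BY}.
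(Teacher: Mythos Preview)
The paper does not prove this lemma itself; it simply cites \cite{BY}. Your argument is correct and is exactly the elementary route one would expect: extract a large pair-sum from the identity $2k_j=\pm(k_1+k_2)\pm(k_1+k_3)\pm(k_2+k_3)$, use integrality of the remaining factors for (1), divide out the large factor and chase the smallness of the other two pair-sums through the difference relations for (2), and pigeonhole on the cube root for (3). Your caveat about tracking the implicit constants when passing from $\ll$ on the product to $\ll$ on a single factor is well placed but causes no difficulty here.
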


\begin{lem}\label{0-5}
Let \( p(k) = k^3 + i\varepsilon k^2 \). We define the following functions:
\begin{align*}
Q_1(k, k_1, k_2) &= p(k) - p(k_1) - p(k_2), \quad \text{where } k = k_1 + k_2, \\
Q_2(k, k_1, k_2, k_3) &= Q_1(k, k_1 + k_2, k_3) + Q_1(k_1 + k_2, k_1, k_2), \quad \text{where } k = k_1 + k_2 + k_3.
\end{align*}
Then the following inequalities hold:
\begin{equation*}
\begin{aligned}
|Q_1(k, k_1, k_2)| &\gtrsim |k k_1 k_2|,\\
|Q_2(k, k_1, k_2, k_3)| &\gtrsim |\tilde{\phi}| \gtrsim |(k_1 + k_3)(k_2 + k_3)(k_1 + k_2)|.
\end{aligned}
\end{equation*}
\end{lem}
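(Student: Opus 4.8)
The plan is purely algebraic: compute $Q_1$ and $Q_2$ in closed form and read off the lower bounds by discarding the imaginary part.

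First I would expand $Q_1$. Since $k=k_1+k_2$, the elementary identities $(k_1+k_2)^3-k_1^3-k_2^3=3k_1k_2(k_1+k_2)$ and $(k_1+k_2)^2-k_1^2-k_2^2=2k_1k_2$ give
\begin{equation*}
Q_1(k,k_1,k_2)=3kk_1k_2+2i\varepsilon k_1k_2=k_1k_2\,(3k+2i\varepsilon).
\end{equation*}
Hence $|Q_1(k,k_1,k_2)|=|k_1k_2|\sqrt{9k^2+4\varepsilon^2}\geq 3|kk_1k_2|\gtrsim|kk_1k_2|$, which is the first claimed inequality.

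Next I would apply this closed form twice to unwind $Q_2$. Writing $k_{12}:=k_1+k_2$ so that $k=k_{12}+k_3$, the definition yields
\begin{equation*}
Q_2(k,k_1,k_2,k_3)=Q_1(k,k_{12},k_3)+Q_1(k_{12},k_1,k_2)=k_{12}k_3\,(3k+2i\varepsilon)+k_1k_2\,(3k_{12}+2i\varepsilon).
\end{equation*}
Separating real and imaginary parts, $\re Q_2=3k_{12}(k_3k+k_1k_2)$ and $\im Q_2=2\varepsilon(k_3k+k_1k_2)$. The key observation is the factorization $k_3k+k_1k_2=k_3(k_1+k_2+k_3)+k_1k_2=(k_1+k_3)(k_2+k_3)$, valid because $k=k_1+k_2+k_3$. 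Therefore
\begin{equation*}
\re Q_2=3(k_1+k_2)(k_1+k_3)(k_2+k_3)=\tilde\phi,\qquad \im Q_2=2\varepsilon(k_1+k_3)(k_2+k_3),
\end{equation*}
using the already-recorded identity $\tilde\phi=3(k_1+k_2)(k_2+k_3)(k_1+k_3)$. Consequently $|Q_2(k,k_1,k_2,k_3)|\geq|\re Q_2|=|\tilde\phi|=3|(k_1+k_2)(k_2+k_3)(k_1+k_3)|\gtrsim|(k_1+k_3)(k_2+k_3)(k_1+k_2)|$, which is the second claim.

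There is no genuine analytic obstacle here; the only point requiring care is bookkeeping of which frequency plays the role of $k$, $k_1$, $k_2$ in the nested applications of the $Q_1$-formula, together with spotting the factorization $k_3k+k_1k_2=(k_1+k_3)(k_2+k_3)$ that makes $\re Q_2$ collapse onto $\tilde\phi$. The diffusive parameter $\varepsilon$ only enters the imaginary parts and is simply thrown away in the estimates, which is precisely why the bounds are uniform in $\varepsilon\in(0,1]$.
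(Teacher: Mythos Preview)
Your approach is correct and essentially identical to the paper's: both compute $Q_1$ and $Q_2$ explicitly, identify the real parts as $3kk_1k_2$ and $\tilde\phi$, and bound $|Q_j|\geq|\re Q_j|$. One small slip: in $\im Q_2$ you wrote $2\varepsilon(k_3k+k_1k_2)$ but the correct expression from your own formula is $2\varepsilon(k_{12}k_3+k_1k_2)=2\varepsilon(k_1k_2+k_1k_3+k_2k_3)$ (matching the paper's $\varepsilon(k^2-k_1^2-k_2^2-k_3^2)$); this is harmless since you only use the real part.
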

\begin{proof}
From the expression of $Q_1(k,k_1,k_2)$, we have
\begin{align*}
& |Q_1(k,k_1,k_2)|=|3kk_1k_2+i\varepsilon(k^2-k_1^2-k_2^2)|\gtrsim|kk_1k_2|.\end{align*}
For $Q_2(k,k_1,k_2,k_3)=Q_1(k,k_1+k_2,k_3)+Q_1(k_1+k_2,k_1,k_2)$, we have
     \begin{align}
     Q_2(k,k_1,k_2,k_3)&=3(kk_3+k_1k_2)(k_1+k_2)+i\varepsilon(k^2-k_1^2-k_2^2-k_3^2)\nonumber\\
     & =3(k_1+k_3)(k_1+k_2)(k_2+k_3)+i\varepsilon(k^2-k_1^2-k_2^2-k_3^2).\label{0.7}\end{align}
Hence, we obtain$$  |Q_2(k,k_1,k_2,k_3)|   \gtrsim|3(k_1+k_3)(k_2+k_3)(k_1+k_2)|=3|\tilde{\phi}|.$$
Therefore, the lemma is proved.
\end{proof}

\begin{lem}\label{0-6}
Let \( \tilde{\phi} \) be defined in \eqref{0-12}, and let \( k = k_1 + k_2 + k_3 \) with \( k_1 + k_2 \neq 0 \) and $k\neq 0$, \( C_1(v_1, v_2, v_3) \),
\( C_2(v_1, v_2, v_3) \) and \( C_3(v_1, v_2, v_3) \) satisfy the following inequalities:
\begin{align*}
\left|\mathcal{F}_k[C_1(v_1, v_2, v_3)]\right| &\lesssim \sum_{\Gamma_0(k), \, k_3 \neq 0} |k_3|^{-1} \left|\mathcal{F}_{k_1}[v_1(t)] \mathcal{F}_{k_2}[v_2(t)] \mathcal{F}_{k_3}[v_3(t)]\right|, \\
\left|\mathcal{F}_k[C_2(v_1, v_2, v_3)]\right| &\lesssim \sum_{\Gamma(k), \, k_3 \neq 0} |k_3|^{-1} |\tilde{\phi}|^{-1} \left|\mathcal{F}_{k_1}[v_1(t)] \mathcal{F}_{k_2}[v_2(t)] \mathcal{F}_{k_3}[v_3(t)]\right|,\\
\left|\mathcal{F}_k[C_3(v_1, v_2, v_3)]\right| &\lesssim \sum_{\Gamma(k)} |k|  |\tilde{\phi}|^{-1}\left|\mathcal{F}_{k_1}[v_1(t)] \mathcal{F}_{k_2}[v_2(t)] \mathcal{F}_{k_3}[v_3(t)]\right|,
\end{align*}
where \( \Gamma_0(k) \) and \( \Gamma(k) \) are defined in \eqref{0-8} and \eqref{0.1}, respectively. We assume $|k_m|=|k_1|>N$, then the following estimates hold:

(1)  Let $s > -1/2$, if $v_1, v_2, v_3 \in H^s$,  then the function $C_1(v_1, v_2, v_3)\in H^s$ and
\[
\| C_1(v_1, v_2, v_3) \|_{H^s} \lesssim \| v_1 \|_{L_t^\infty H_x^s} \| v_2 \|_{L_t^\infty H_x^s} \| v_3 \|_{L_t^\infty H_x^s}.
\]

(2)  Let $s\geq 0$, if $v_1\in H^{s-1}$, and $v_2, v_3 \in H^s$,  then the function $C_2(v_1, v_2, v_3)\in H^s$ and
\[
\| C_2(v_1, v_2, v_3) \|_{H^s} \lesssim \| v_1 \|_{L_t^\infty H_x^{-1+s}} \| v_2 \|_{L_t^\infty H_x^s} \| v_3 \|_{L_t^\infty H_x^s}.
\]

(3) Let $s\geq {1}/{2}$, if $v_1\in H^{s-1/2}$, and $v_2, v_3 \in H^s$,  then $C_3(v_1, v_2, v_3)\in H^s$ and
\[
\| C_3(v_1, v_2, v_3) \|_{H^s} \lesssim \| v_1 \|_{L_t^\infty H_x^{-1/2+s}} \| v_2 \|_{L_t^\infty H_x^s} \| v_3 \|_{L_t^\infty H_x^s}.
\]
\end{lem}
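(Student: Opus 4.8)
The plan is to pass to the Fourier side, reduce all three inequalities to one weighted $\ell^2$ estimate, and then extract the necessary summability from the phase function $\tilde{\phi}$ using Lemma \ref{5} and Lemma \ref{0-5}. Since the right-hand sides already carry $L_t^\infty$ norms, it suffices to work at a fixed time $t$ and use $\|f\|_{H^s}\sim\|\langle k\rangle^s\hat f_k\|_{\ell_k^2}$. Setting $a_1(k_1):=\langle k_1\rangle^{\sigma}|\F_{k_1}[v_1(t)]|$ with $\sigma=s$ in part (1), $\sigma=s-1$ in part (2), $\sigma=s-1/2$ in part (3), and $a_j(k_j):=\langle k_j\rangle^s|\F_{k_j}[v_j(t)]|$ for $j=2,3$, each claim reduces to
\[
\big\|\langle k\rangle^s\,\F_k[C_i(v_1,v_2,v_3)]\big\|_{\ell_k^2}\lesssim\|a_1\|_{\ell^2}\,\|a_2\|_{\ell^2}\,\|a_3\|_{\ell^2},
\]
after which only nonnegative Fourier weights remain and one argues entirely with Cauchy--Schwarz, Young's inequality, and $\sum_{k\neq0}|k|^{-2}<\infty$; the stated conclusions then follow by bounding $\|a_1\|_{\ell^2}$, $\|a_j\|_{\ell^2}$ by the corresponding $L_t^\infty H_x$ norms.

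For part (1) the sum defining $C_1$ is carried by the resonant set $\Gamma_0(k)$, on which one of $k_1+k_2$, $k_1+k_3$, $k_2+k_3$ vanishes, so I split it into the corresponding three pieces. In each piece two of the frequencies coincide up to a sign and the remaining one is forced to equal the output mode $k$, and the standing hypothesis $|k_m|=|k_1|>N$ determines which frequency plays which role. The triple sum then decouples into a Cauchy--Schwarz bound over the paired internal variable times an $\ell_k^2$ sum over the output mode, and the prefactor $|k_3|^{-1}$ supplies summability over the internal variable. The extremal piece is $k_1+k_3=0$ (so $|k_1|=|k_3|=|k_m|>N$ and $k_2=k$), which leaves on the internal sum the weight $\langle k_1\rangle^{-1-2s}$; this is bounded precisely when $s\geq-1/2$, which underlies the lower threshold $s>-1/2$.

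For parts (2) and (3) the sums live on the non-resonant set $\Gamma(k)$, where $|\tilde{\phi}|\gtrsim|k_m|=|k_1|$ by Lemma \ref{5}(1), and I split $\Gamma(k)=\Gamma_1(k)\cup\Gamma_2(k)$. On $\Gamma_2(k)$ one has the much stronger gain $|\tilde{\phi}|\gtrsim|k_m|^2$, refined further on $\Gamma_{21}(k)$ and $\Gamma_{22}(k)$ by Lemma \ref{5}(3); moving the output weight onto the $v_1$ slot via $\langle k\rangle^s\le\langle k_1\rangle^s$ costs only $\langle k_1\rangle$ (in part (2)) or $\langle k_1\rangle^{1/2}$ (in part (3)) beyond the regularity of $v_1$, and $|\tilde{\phi}|^{-1}$ absorbs this with room to spare. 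Combined with $|k_3|^{-1}$ in part (2), and on $\Gamma_{21}(k)$ in part (3) with the structural bound $|k_j+k_h|\ll|k_m|^{5/7}$ from Lemma \ref{5}(3), one arranges two of the three convolution factors to lie in $\ell^1$ and the remaining one in $\ell^2$, and closes by $\|f*g*h\|_{\ell^2}\lesssim\|f\|_{\ell^1}\|g\|_{\ell^1}\|h\|_{\ell^2}$. On $\Gamma_1(k)$, where $|\tilde{\phi}|^{-1}$ only gains $|k_m|^{-1}$, one instead uses $|k|\sim|k_1|\sim|k_2|\sim|k_3|$ from Lemma \ref{5}(2) to retain a small negative power of $\langle k\rangle$ for the output sum, decomposes $|\tilde{\phi}|$ dyadically, and controls the number of near-resonant triples $(k_1,k_2,k_3)$ with $|\tilde{\phi}|$ of a given size by counting integer solutions of the resulting quadratic constraint.

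The main obstacle is this near-resonant regime --- the block $\Gamma_1(k)$, together with the intermediate block $\Gamma_{21}(k)$ --- where $|\tilde{\phi}|$ is too small for the estimate to be read off from its size alone. There the bound survives only by combining the fine-grained size of $|\tilde{\phi}|$ with the counting of resonant frequency configurations furnished by Lemma \ref{5}(2)--(3) and with the auxiliary gains $|k_3|^{-1}$ and $|k|$; it is precisely in balancing these ingredients that the thresholds $s\ge0$ for $C_2$ and $s\ge1/2$ for $C_3$ are forced.
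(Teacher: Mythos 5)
Your overall strategy --- Plancherel, the resonant/non-resonant splitting, and extraction of decay from the phase via Lemma \ref{5} --- is the same as the paper's, and your treatments of part (1), of part (2), and of the block $\Gamma_2(k)$ in part (3) are sound. (The paper closes $\Gamma_2(k)$ by Cauchy--Schwarz using $\sum_{k_2,k_3}|k_m|^{-2}|k_3|^{-2}<\infty$ rather than your $\ell^1\ast\ell^1\ast\ell^2$ Young scheme, and it never needs the finer sets $\Gamma_{21},\Gamma_{22}$ in this lemma; these are cosmetic differences.) For part (2) on $\Gamma_1(k)$ your machinery is more than is needed: since all four frequencies are comparable there, the redistributed weight is $\lesssim|k|^{-1-2s}\le|k|^{-1}$ for $s\ge0$, and a bare Cauchy--Schwarz over the $\sim|k|^2$ free pairs $(k_2,k_3)$ already closes; the paper does essentially this via an $L^6\times L^6\times L^6$ product estimate.

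The genuine gap is in part (3) on the near-resonant block $\Gamma_1(k)$ at the endpoint $s=1/2$. There the redistributed weight is $|k|^{3/2-2s}|\tilde\phi|^{-1}$, i.e.\ $|k|^{1/2}|\tilde\phi|^{-1}$ at $s=1/2$, and your plan is to decompose dyadically in $|\tilde\phi|\sim M\in[|k_m|,|k_m|^2]$ and count lattice points on level sets of $\tilde\phi$. Writing $a=k_1+k_2$, $b=k_2+k_3$, $c=k_1+k_3$, so that $a+b+c=2k$ and $abc=\tilde\phi/3$, each value of $\tilde\phi$ has at most $\lesssim|\tilde\phi|^{\epsilon}$ preimages (an ordered three-fold divisor bound), so the level-set count at scale $M$ is only $O(M^{1+\epsilon})$. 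Cauchy--Schwarz then yields a factor $|k|^{1/2}M^{-1}M^{(1+\epsilon)/2}$, which at the worst scale $M\sim|k|$ is $|k|^{\epsilon/2}$ --- an uncompensated loss, since at $s=1/2$ the input $v_1$ sits exactly in $L^2$ and has no surplus decay to absorb it. The paper avoids this by using the strictly stronger structural bound $|\tilde\phi|\gtrsim|k_m|\Lambda$ with $\Lambda$ as in \eqref{99-7}, together with the two-dimensional summability $\sum_{k_2,k_3}\Lambda^{-1-\delta}\lesssim1$ uniformly in $k$; this provides a genuine power gain on $\Gamma_1(k)$ rather than an $\epsilon$-gain, and is precisely what makes the endpoint $s=1/2$ reachable. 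You would need to replace the divisor-counting step by this (or an equivalent) mechanism, or else settle for $s>1/2$.
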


\begin{proof}
For $(k_1,k_2,k_3)\in \Gamma_0(k)$, since $k_1+k_2\neq0$, so either $k_1+k_3=0$ or $k_2+k_3=0$. We assume $k_1+k_3=0$, then $k=k_2$, it follows that
\begin{align*}|\mathcal{F}_{k}[C_1(v_1,v_2,v_3)]|
\lesssim \sum_{k_3\neq 0}|k_3|^{-1}\mathcal{F}_{-k_3}[v_1(t)]\mathcal{F}_{k}[v_2(t)]\mathcal{F}_{k_3}[v_3(t)].
\end{align*}
Taking $H^s$ norm, we obtain
\begin{align*}
\left\| {C}_{1}(v_1, v_2, v_3) \right\|_{H^s} &\lesssim \Big\|\sum_{k_3\neq 0}|k|^s\mathcal{F}_{k}[v_2(t)]|k_3|^{-1}\mathcal{F}_{-k_3}[v_1(t)]\mathcal{F}_{k_3}[v_3(t)]\Big\|_{L^2}\\
&\lesssim \big\|\mathcal{F}_{k}[v_2(t)]\big\|_{H^s}\Big\|\sum_{k_3\neq 0}|k_3|^{-1}|\mathcal{F}_{-k_3}[v_1(t)]|\left|\mathcal{F}_{k_3}[v_3(t)]\right|\Big\|_{L^{\infty}}\\
&\lesssim  \|v_2(t)\|_{H^s}\Big\|\sum_{k_3\neq 0}|k_3|^{-s-1}\mathcal{F}_{-k_3}[v_1(t)]|k_3|^s\mathcal{F}_{k_3}[v_3(t)]\Big\|_{L^{\infty}}\\
&\lesssim   \big\||k|^{-s-1}\mathcal{F}_{k}[v_1(t)]\big\|_{L^2}\|v_2(t)\|_{H^s} \|v_3(t)\|_{H^s}.
\end{align*}
For $s>-{1}/{2}$, this implies that
$$\left\| {C}_{1}(v_1, v_2, v_3) \right\|_{H^s}\lesssim\|v_1\|_{L_t^{\infty}H_x^s}\|v_2\|_{L_t^{\infty}H_x^s}\|v_3\|_{L_t^{\infty}H_x^s}.$$

Since $\Gamma(k)=\Gamma_1(k)\cup\Gamma_2(k)$, we decompose the inequality into two parts:
\begin{align*}
\left|\mathcal{F}_k[C_2(v_1, v_2, v_3)]\right|
&\lesssim \sum_{\Gamma_1(k), \, k_3 \neq 0} |k_3|^{-1} |\tilde{\phi}|^{-1} \big|\mathcal{F}_{k_1}[v_1(t)] \mathcal{F}_{k_2}[v_2(t)] \mathcal{F}_{k_3}[v_3(t)]\big|\\
&\qquad+\sum_{\Gamma_2(k), \, k_3 \neq 0} |k_3|^{-1} |\tilde{\phi}|^{-1} \big|\mathcal{F}_{k_1}[v_1(t)] \mathcal{F}_{k_2}[v_2(t)] \mathcal{F}_{k_3}[v_3(t)]\big|\\
&=:|\mathcal{F}_{k} [C_{21}(v_1,v_2,v_3)]|+|\mathcal{F}_{k} [C_{22}(v_1,v_2,v_3)]|.
\end{align*}

Let
$
\hat{v}_{j, k_j}=\mathcal{F}_{k_j}[v_j],
$
 $j=1,2,3,$
and assume that \( \hat{v}_{j, k_j} \) are nonnegative for any \( t \in [0,T] \), otherwise one may replace them by \( |\hat{v}_{j, k_j}| \).
By Lemma \ref{5}, for $(k_1,k_2,k_3)\in\Gamma_1(k)$, we have  $|\tilde{\phi}| \gtrsim |k_m|$ and $|k|\sim|k_1|\sim|k_2|\sim|k_3|$.

Therefore,
\begin{align*}
|\mathcal{F}_{k} [C_{21}(v_1,v_2,v_3)]|
&\lesssim  \sum_{|k_1|\sim|k_2|\sim|k_3|\sim|k|}|k|^{-1}|k_m|^{-1}
          \hat{v}_{1, k_1} \hat{v}_{2, k_2} \hat{v}_{3, k_3}\\
&\lesssim \sum_{|k_1|\sim|k_2|\sim|k_3|\sim|k|}|k_m|^{-1}|k_1|^{-\frac{1}{3}}\hat{v}_{1, k_1}
|k_2|^{-\frac{1}{3}}\hat{v}_{2, k_2}|k_3|^{-\frac{1}{3}}\hat{v}_{3, k_3}.
\end{align*}
For $s\geq 0$, we also get $|k|^s\lesssim|k|^{3s}\sim |k_1|^s|k_2|^s|k_3|^s$. Then we have
\begin{align*}
\|\mathcal{F}_{k} [C_{21}(v_1,v_2,v_3)]\|_{H^s}
&\lesssim \Big\|\sum_{k_1+k_2+k_3=k}|k|^s|k|^{-1}|k_m|^{-1}\hat{v}_{1, k_1} \hat{v}_{2, k_2} \hat{v}_{3, k_3}\Big\|_{L^2}\\
&\lesssim \Big\|\sum_{|k_1|\sim|k_2|\sim|k_3|}|k_m|^{-1}|k_1|^{s-\frac{1}{3}}\hat{v}_{1, k_1}
\cdot|k_2|^{s-\frac{1}{3}}\hat{v}_{2, k_2}
\cdot|k_3|^{s-\frac{1}{3}}\hat{v}_{3, k_3} \Big\|_{L^2}\\
&\lesssim \big\|\partial_x^{-1}\partial_x^{s-\frac{1}{3}}v_1\big\|_{L_t^{\infty}L_x^6}
\big\|\partial_x^{s-\frac{1}{3}}v_2\big\|_{L_t^{\infty}L_x^6}\big\|\partial_x^{s-\frac{1}{3}}v_3\big\|_{L_t^{\infty}L_x^6}\\
&\lesssim \|v_1\|_{L_t^{\infty}H_x^{s-1}}
\|v_2\|_{L_t^{\infty}H_x^s}\|v_3\|_{L_t^{\infty}H_x^s}.
\end{align*}
For the last two line, we obtain the inequality by choosing $|k_m|^{-1}=|k_1|^{-1}$.

By Lemma \ref{5}, for $(k_1,k_2,k_3)\in\Gamma_2(k)$, without loss of generality, we assume $|k_m|=|k_1|$, we have $|\tilde{\phi}|\gtrsim|k_m|^2=|k_1|^2$, which leads to the following estimate by the Cauchy-Schwarz inequality
\begin{align*}
|\mathcal{F}_{k}[{C}_{22}(v_1, v_2, v_3)]|&\lesssim
\Big(\sum_{k_2,k_3}|k_m|^{-2}|k_3|^{-2}\Big)^{\frac{1}{2}}\Big(\sum_{k_2,k_3}|k_m|^{-2}|\hat{v}_{1, k-k_2-k_3}|^2 |\hat{v}_{2, k_2}|^2 |\hat{v}_{3, k_3}|^2\Big)^{\frac{1}{2}}\\
&\lesssim \Big(\sum_{k_2,k_3}|k_m|^{-2}|\hat{v}_{1, k-k_2-k_3}|^2 |\hat{v}_{2, k_2}|^2 |\hat{v}_{3, k_3}|^2\Big)^{\frac{1}{2}}.
\end{align*}
By taking square of the inequality above and summing up the results for $k\in\mathbb{Z}$, we obtain
\begin{align}
\left\| {C}_{22}(v_1, v_2, v_3) \right\|_{H^\sigma}^2 &\lesssim \sum_{k \neq 0} |k|^{2\sigma} \sum_{k_1 + k_2 + k_3 = k} |k_m|^{-2} \left| \hat{v}_{1, k_1}\right|^2 \left| \hat{v}_{2, k_2} \right|^2 \left|\hat{v}_{3, k_3} \right|^2 \nonumber\\
&\lesssim \sum_{k_1, k_2, k_3} \left| \langle k_1 \rangle^{\sigma_1} \mathcal{F}_{k_1}[v_1(t)] \right|^2 \left| \langle k_2 \rangle^{\sigma_2} \mathcal{F}_{k_2}[v_2(t)] \right|^2 \left| \langle k_3 \rangle^{\sigma_3} \mathcal{F}_{k_3}[v_3(t)] \right|^2 \nonumber\\
&\lesssim \| v_1 \|_{L_t^\infty H_x^{\sigma_1}}^2 \| v_2 \|_{L_t^\infty H_x^{\sigma_2}}^2 \| v_3 \|_{L_t^\infty H_x^{\sigma_3}}^2,\label{1-11}
\end{align}
where $0\leq\sigma\leq 1$ and $-1\leq \sigma_j\leq 0, j=1, 2,3$ satisfying $\sigma_1+\sigma_2+\sigma_3=\sigma-1.$

In particular, by choosing $\sigma_1=-1-s$, and $\sigma_2=\sigma_3=\sigma=s$ in \eqref{1-11}, for $s\geq 0$, we have $\sigma_1\leq -1\leq s-1$, then we obtain the following result:
\begin{align*}
\|C_{22}(v_1,v_2,v_3)\|_{H^s}
&\lesssim\|v_1\|_{L_t^{\infty}H_x^{-1+s}}\|v_2\|_{{L_t^{\infty}H_x^s}}\|v_3\|_{{L_t^{\infty}H_x^s}}.
\end{align*}

Similarly, we decompose $\left|\mathcal{F}_k[C_{3}(v_1,v_2,v_3)]\right|$ into two parts:
\begin{align*}
\left|\mathcal{F}_k[C_{3}(v_1,v_2,v_3)]\right|
&\lesssim \sum_{\Gamma_1(k)} |k||\tilde{\phi}|^{-1} \big|\mathcal{F}_{k_1}[v_1(t)] \mathcal{F}_{k_2}[v_2(t)] \mathcal{F}_{k_3}[v_3(t)]\big|\\
&\qquad+\sum_{\Gamma_2(k)} |k| |k_m|^{-2} \big|\mathcal{F}_{k_1}[v_1(t)] \mathcal{F}_{k_2}[v_2(t)] \mathcal{F}_{k_3}[v_3(t)]\big|\\
&=:|\mathcal{F}_{k} [C_{31}(v_1,v_2,v_3)]|+|\mathcal{F}_{k} [C_{32}(v_1,v_2,v_3)]|.
\end{align*}
For $(k_1,k_2,k_3)\in\Gamma_1(k)$, the following inequality also holds in this case: \( |\tilde{\phi}| \gtrsim |k_m|\Lambda \), where
\begin{align}\label{99-7}
\Lambda = \Lambda_k := \min\left( |k_1 + k_2||k_2 + k_3|, |k_2 + k_3||k_3 + k_1|, |k_3 + k_1||k_1 + k_2| \right).
\end{align}
$\tilde{\phi}$ satisfies  \eqref{0-12}. For $\Lambda$ as in \eqref{99-7}, we have $\max(|k_2|, |k_3|, \Lambda) \gtrsim |k_1|$, since we have $|k_2| \sim |k_1|$ or $|k_3| \sim |k_1|$ if $\Lambda \ll |k_1|$.

 Using the Cauchy-Schwarz inequality, for $\delta>0$, we have
 \begin{align*}
|\mathcal{F}_{k} [C_{31}(v_1,v_2,v_3)]|&\lesssim\sum_{\Gamma_1(k)}|\Lambda|^{-1}\hat{v}_{1,k_1}\hat{v}_{2,k_2}\hat{v}_{3,k_3}\\
&\lesssim|k|^{-\frac{1}{2}}\Big( \sum_{k_2,k_3}|\Lambda|^{-1-\delta}\Big)^{\frac{1}{2}}\Big( \sum_{k_2,k_3}|\Lambda|^{-1+\delta}|k||\hat{v}_{1,k-k_2-k_3}|^2|\hat{v}_{2,k_2}|^2|\hat{v}_{3,k_3}|^2
\Big)^{\frac{1}{2}}\\
&\lesssim \Big( \sum_{k_1+k_2+k_3=k}|\Lambda|^{-1+\delta}|k||\hat{v}_{1,k_1}|^2|\hat{v}_{2,k_2}|^2|\hat{v}_{3,k_3}|^2\Big)^{\frac{1}{2}},\\
|\mathcal{F}_{k} [C_{32}(v_1,v_2,v_3)]|&\lesssim\sum_{\Gamma_2(k)}|k_m|^{-1}\hat{v}_{1,k_1}\hat{v}_{2,k_2}\hat{v}_{3,k_3}\\
&\lesssim\Big( \sum_{k_2,k_3}\frac{1}{|k_2||k_3||k_m|}\Big)^{\frac{1}{2}}\Big( \sum_{k_2,k_3}\frac{|k_2||k_3|}{|k_m|}|\hat{v}_{1,k-k_2-k_3}|^2|\hat{v}_{2,k_2}|^2|\hat{v}_{3,k_3}|^2
\Big)^{\frac{1}{2}}\\
&\lesssim \Big( \sum_{k_1+k_2+k_3=k}|k_2||k_3|{|k_m|}^{-1}|\hat{v}_{1,k_1}|^2|\hat{v}_{2,k_2}|^2|\hat{v}_{3,k_3}|^2\Big)^{\frac{1}{2}}.
\end{align*}
Assume $|k_m|=|k_1|$, for $s\geq1/2$, we have $|k|\lesssim|k|^{2s}$, and $|\Lambda|^{-1+\delta}\lesssim 1$, which further yields that
\begin{align*}
\|C_{31}(v_1,v_2,v_3)\|^2_{H^s}
&\lesssim \sum_{k \neq 0}|k|^{2s}\sum_{\Gamma_1(k)}|k||\hat{v}_{1,k_1}|^2|\hat{v}_{2,k_2}|^2|\hat{v}_{3,k_3}|^2\nonumber\\
&\lesssim \sum_{k_1, k_2, k_3}|k|^{2s+1}|\hat{v}_{1,k_1}|^2|\hat{v}_{2,k_2}|^2|\hat{v}_{3,k_3}|^2\\
\nonumber
&\lesssim\sum_{k_1, k_2, k_3} \left| \langle k_1 \rangle^{\sigma_1} \mathcal{F}_{k_1}[v_1(t)] \right|^2 \left| \langle k_2 \rangle^{\sigma_2} \mathcal{F}_{k_2}[v_2(t)] \right|^2 \left| \langle k_3 \rangle^{\sigma_3} \mathcal{F}_{k_3}[v_3(t)] \right|^2 \nonumber\\
&\lesssim \| v_1 \|_{L_t^\infty H_x^{\sigma_1}}^2 \| v_2 \|_{L_t^\infty H_x^{\sigma_2}}^2 \| v_3 \|_{L_t^\infty H_x^{\sigma_3}}^2,
\end{align*}
where $\sigma_1, \sigma_2, \sigma_3$ satisfy $\sigma_1+\sigma_2+\sigma_3=s+1/2.$
By choosing $\sigma_1=s-\alpha$, $\sigma_2=\sigma_3=s$ in above inequality, for $s\geq 1/2$, and $\alpha\geq 1/2$. then we obtain the following result:
$$\|C_{31}(v_1,v_2,v_3)\|^2_{H^s}\lesssim \| v_1 \|_{L_t^\infty H_x^{s-1/2}}^2 \| v_2 \|_{L_t^\infty H_x^{s}}^2 \| v_3 \|_{L_t^\infty H_x^{s}}^2.$$
The second term we obtain
\begin{align*}
\|C_{32}(v_1,v_2,v_3)\|^2_{H^s}
&\lesssim \sum_{k \neq 0}|k|^{2s}\sum_{\Gamma_2(k)}|k_m|^{-1}|k_2||k_3||\hat{v}_{1,k_1}|^2|\hat{v}_{2,k_2}|^2|\hat{v}_{3,k_3}|^2\nonumber\\
&\lesssim \sum_{k_1, k_2, k_3} |k_m|^{-1}|k_1|^{2s}|\hat{v}_{1,k_1}|^2|k_2|^{2s}|\hat{v}_{2,k_2}|^2|k_3|^{2s}|\hat{v}_{3,k_3}|^2\\
\nonumber
&\lesssim \| v_1 \|_{L_t^\infty H_x^{s-1/2}}^2 \| v_2 \|_{L_t^\infty H_x^{s}}^2 \| v_3 \|_{L_t^\infty H_x^{s}}^2.
\end{align*}
Hence, the third result of Lemma holds for $s\geq 1/2$.
\end{proof}
Kato-Ponce inequality which was originally proved in \cite{TG} and extended to the endpoint case in \cite{ Li-KatoPonce} \cite{Li-D} recently.
\begin{lem}[Kato Ponce inequality]\label{7-1}
For $s>0$, $1<p\leq\infty$ and $1<p_1, p_2, p_3, p_4\leq\infty$ satisfying $\frac{1}{p}=\frac{1}{p_1}+\frac{1}{p_2}$ and $\frac{1}{p}=\frac{1}{p_3}+\frac{1}{p_4}$, the following inequality holds:
\begin{align*}
\|J^s(fg)\|_{L^p}\leq C(\|J^{s}f\|_{L^{p_1}}\|g\|_{L^{p_2}}+\|J^{s}g\|_{L^{p_3}}\|f\|_{L^{p_4}}),
\end{align*}
where the constant $C>0$ depends on $s, p_1, p_2, p_3, p_4$. In particular, when $s>\frac{1}{p}$, then
\begin{align*}
\|J^s(fg)\|_{L^p}\leq C\|J^{s}f\|_{L^{p}}\|J^{s}g\|_{L^{p}},
\end{align*}
where the constant $C>0$ depends on $s, p$.
\end{lem}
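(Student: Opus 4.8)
The plan is to prove the first inequality by the Littlewood-Paley (Bony paraproduct) method and to deduce the second one from it by a Sobolev embedding. Let $\Delta_j$ denote the Littlewood-Paley projection onto frequencies $|k|\sim 2^j$, let $S_j=\sum_{l\le j}\Delta_l$, and split the product into low-high, high-low and resonant (high-high) interactions:
\[
fg=\Pi(f,g)+\Pi(g,f)+R(f,g),\qquad \Pi(f,g):=\sum_j S_{j-2}f\,\Delta_j g,
\]
where $\Pi(f,g)$ and $\Pi(g,f)$ are the two paraproducts and $R(f,g):=\sum_{|j-l|\le 2}\Delta_j f\,\Delta_l g$ is the resonant part.

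The block $\Pi(g,f)$ has output frequency $\sim 2^j$ on its $j$-th term, so there $J^s$ only contributes a factor $\sim 2^{js}$ and the estimate reduces to that of $\sum_j S_{j-2}g\,(J^s\Delta_j f)$. I would then combine the vector-valued Littlewood-Paley square function estimate, the pointwise bound $\sup_j|S_{j-2}g|\lesssim Mg$ with $M$ the Hardy-Littlewood maximal operator (so that $\|Mg\|_{L^{p_2}}\lesssim\|g\|_{L^{p_2}}$ since $p_2>1$; when $p_2=\infty$ one uses instead $\sup_j|S_{j-2}g|\lesssim\|g\|_{L^\infty}$), and H\"older's inequality with $\frac{1}{p}=\frac{1}{p_1}+\frac{1}{p_2}$, to obtain
\[
\|J^s\Pi(g,f)\|_{L^p}\lesssim\Big\|\Big(\sum_j|J^s\Delta_j f|^2\Big)^{1/2}\Big\|_{L^{p_1}}\,\|Mg\|_{L^{p_2}}\lesssim\|J^s f\|_{L^{p_1}}\|g\|_{L^{p_2}}.
\]
By symmetry the block $\Pi(f,g)$ contributes the second term $\|J^s g\|_{L^{p_3}}\|f\|_{L^{p_4}}$.

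For the resonant block $R(f,g)$ the output frequency is only constrained by $|k|\lesssim 2^j$, so $J^s$ can no longer be pushed onto the high mode; this is exactly where the hypothesis $s>0$ enters. Applying $J^s$ to a function spectrally supported in $\{|k|\lesssim 2^j\}$ costs at most a factor $2^{js}$, and the geometric series $\sum_{l\le j}2^{ls}\lesssim 2^{js}$ converges; after Cauchy-Schwarz in $j$ and H\"older this reduces the bound to
\[
\Big\|\Big(\sum_j 2^{2js}|\Delta_j f|^2\Big)^{1/2}\Big\|_{L^{p_1}}\Big\|\Big(\sum_j|\Delta_j g|^2\Big)^{1/2}\Big\|_{L^{p_2}}\lesssim\|J^s f\|_{L^{p_1}}\|g\|_{L^{p_2}},
\]
again by the square function characterization of $L^q$ for $1<q<\infty$. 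Adding the three contributions proves the first inequality when all the $p_i$ are finite. The second inequality then follows by taking $p_1=p_3=p$ and $p_2=p_4=\infty$ and inserting $\|h\|_{L^\infty(\mathbb{T})}\lesssim\|J^s h\|_{L^p(\mathbb{T})}$ for $h=f,g$, which is valid because $s>1/p$ in one space dimension (Sobolev embedding for $p<\infty$, integrability of the Bessel kernel for $p=\infty$).

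The genuinely delicate point, and the reason the lemma is recorded with a reference to \cite{TG,Li-KatoPonce,Li-D}, is the behaviour at the endpoints where some $p_i$ (or $p$ itself) equals $\infty$, which in particular underlies the step just used to pass to the second inequality. There the $L^q$ square function characterization is unavailable, and for the resonant block the naive estimate only produces the Besov norm $\sum_j 2^{js}\|\Delta_j f\|_{L^p}=\|f\|_{B^s_{p,1}}$, which is strictly stronger than $\|J^s f\|_{L^p}$; one must instead exploit cancellation among the resonant blocks, which is the content of the endpoint refinements of \cite{Li-KatoPonce,Li-D} (closely related to the $L^\infty$ endpoint of the Coifman-Meyer bilinear multiplier theorem). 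For the case $p=2$ that is used most often below, no such machinery is needed, since $\|J^s(fg)\|_{L^2}\lesssim\|J^s f\|_{L^2}\|J^s g\|_{L^2}$ for $s>1/2$ is simply the algebra property of $H^s(\mathbb{T})$, which is immediate from Peetre's inequality $\langle k\rangle^{s}\lesssim\langle k-k_1\rangle^{s}+\langle k_1\rangle^{s}$, the convolution formula for Fourier coefficients, and the Cauchy-Schwarz inequality.
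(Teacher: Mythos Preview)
The paper does not prove this lemma at all: it is stated as a quotation of known results, with the line ``Kato--Ponce inequality which was originally proved in \cite{TG} and extended to the endpoint case in \cite{Li-KatoPonce,Li-D} recently'' serving as the entire justification. So there is no ``paper's own proof'' to compare against; your task reduces to whether your sketch is a faithful outline of the cited literature.

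It is. The Bony paraproduct decomposition $fg=\Pi(f,g)+\Pi(g,f)+R(f,g)$, the square-function/maximal-function treatment of the two paraproducts, and the use of $s>0$ to sum the resonant block are exactly the standard route (this is essentially how \cite{TG} and its descendants proceed). You also correctly flag that the endpoint cases $p_i=\infty$ or $p=\infty$ are where the real work lies and are the content of \cite{Li-KatoPonce,Li-D}, and you correctly observe that the instance actually used most in the paper---$p=2$, $s>1/2$---is just the algebra property of $H^s(\mathbb{T})$ and needs none of this machinery. One minor imprecision: in your resonant estimate the phrase ``the geometric series $\sum_{l\le j}2^{ls}\lesssim 2^{js}$ converges'' is shorthand for the $\ell^1*\ell^2\to\ell^2$ Young argument that transfers the output-frequency weight $2^{ms}$ onto the input-frequency weight $2^{js}$; as written it is slightly cryptic, but the underlying mechanism is right.
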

Moreover, we will need the following estimate for the commutator $[A, B] := AB - BA$, which was originally proved in \cite{WZ}.
\begin{lem}
Let $s \geq 0$, $s_1 > {1}/{2}$; then the following inequalities hold:
\begin{enumerate}\label{99-1}
    \item[(1)] For any $f \in H^s \cap H^{s_1}$ and $g \in H^{s+1} \cap H^{s_1+1}$:
    \[
    \left\| [J^s \partial_x, g] f \right\|_{L^2} \lesssim \|f\|_{H^s} \|g\|_{H^{s_1+1}} + \|f\|_{H^{s_1}} \|g\|_{H^{s+1}}.
    \]

    \item[(2)] For any $f \in H^s$ and $g \in H^{s+s_1+1}$:
    \[
    \left\| [J^s, g\partial_x] f \right\|_{L^2} \lesssim \|f\|_{H^s} \|g\|_{H^{s_{1}+1}} + \|f\|_{L^2} \|g\|_{H^{s+s_1+1}}.
    \]
\end{enumerate}
\end{lem}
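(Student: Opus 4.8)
The plan is to argue entirely on the Fourier side, reducing each commutator to a Fourier-multiplier estimate followed by a convolution estimate. For (1) one computes
\[
\mathcal{F}_k\big([J^s\partial_x,g]f\big)=\sum_{k_1\in\mathbb{Z}}m(k,k_1)\,\hat g_{k-k_1}\,\hat f_{k_1},\qquad m(k,k_1):=i\big(\langle k\rangle^s k-\langle k_1\rangle^s k_1\big),
\]
and for (2)
\[
\mathcal{F}_k\big([J^s,g\partial_x]f\big)=\sum_{k_1\in\mathbb{Z}}n(k,k_1)\,\hat g_{k-k_1}\,\hat f_{k_1},\qquad n(k,k_1):=ik_1\big(\langle k\rangle^s-\langle k_1\rangle^s\big).
\]
The first step is the pointwise symbol bound
\[
|m(k,k_1)|+|n(k,k_1)|\ \lesssim\ \langle k-k_1\rangle\,\langle k_1\rangle^s\ +\ \langle k-k_1\rangle^{s+1},
\]
in which, moreover, the first term is the relevant bound on the regime $|k_1|\gg|k-k_1|$ (where necessarily $|k|\sim|k_1|$) and the second on the complementary regime $|k_1|\lesssim|k-k_1|$ (recall $|k|\lesssim\max(|k_1|,|k-k_1|)$ throughout). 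The first bound follows from the mean value theorem, applied to $h(\xi)=\langle\xi\rangle^s\xi$ with $|h'(\xi)|\sim\langle\xi\rangle^s$ for the symbol $m$, and to $h_0(\xi)=\langle\xi\rangle^s$ with $|h_0'(\xi)|\lesssim\langle\xi\rangle^{s-1}$ for the symbol $n$; the second is just the triangle inequality together with $\langle k_1\rangle\lesssim\langle k-k_1\rangle$.

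The second step converts these into $L^2$ bounds using Young's inequality $\|a\ast b\|_{\ell^2}\le\|a\|_{\ell^1}\|b\|_{\ell^2}$ together with the Wiener-type bound $\sum_k|\hat u_k|\lesssim\|u\|_{H^{s_1}}$, valid for $s_1>1/2$ by Cauchy--Schwarz since $\langle\cdot\rangle^{-s_1}\in\ell^2(\mathbb{Z})$. For (1), one splits the $k_1$-sum into the two regimes: on $|k_1|\gg|k-k_1|$ the corresponding portion is bounded in $L^2$ by the $\ell^2$-norm of $\big(\langle\cdot\rangle|\hat g|\big)\ast\big(\langle\cdot\rangle^s|\hat f|\big)$, hence by $\|\langle\cdot\rangle|\hat g|\|_{\ell^1}\|\langle\cdot\rangle^s|\hat f|\|_{\ell^2}\lesssim\|g\|_{H^{s_1+1}}\|f\|_{H^s}$; on $|k_1|\lesssim|k-k_1|$ it is bounded by the $\ell^2$-norm of $\big(\langle\cdot\rangle^{s+1}|\hat g|\big)\ast|\hat f|$, hence by $\|\langle\cdot\rangle^{s+1}|\hat g|\|_{\ell^2}\|\hat f\|_{\ell^1}\lesssim\|g\|_{H^{s+1}}\|f\|_{H^{s_1}}$. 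For (2) the routing is arranged so that $f$ never carries a derivative: on $|k_1|\gg|k-k_1|$ one argues exactly as in the first case of (1) and obtains $\|f\|_{H^s}\|g\|_{H^{s_1+1}}$; on $|k_1|\lesssim|k-k_1|$ one again has $|n|\lesssim\langle k-k_1\rangle^{s+1}$, but this time writes the convolution as $\big(\langle\cdot\rangle^{s+1}|\hat g|\big)\ast|\hat f|$ and puts $g$ in $\ell^1$ --- costing $\|\langle\cdot\rangle^{s+1}|\hat g|\|_{\ell^1}\lesssim\|g\|_{H^{s+s_1+1}}$ --- and $f$ in $\ell^2$, contributing $\|f\|_{L^2}$; this yields $\|f\|_{L^2}\|g\|_{H^{s+s_1+1}}$.

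I expect the only point needing genuine care to be the symbol estimate for $n$ in the high-high part of the second regime, $|k_1|\sim|k-k_1|\gtrsim|k|$: there $\langle k\rangle^s-\langle k_1\rangle^s$ enjoys no cancellation and is merely $O(\langle k_1\rangle^s)$, so together with the factor $k_1$ one is forced to absorb $\langle k_1\rangle^{s+1}\sim\langle k-k_1\rangle^{s+1}$ entirely onto $g$, keeping $f$ in $L^2$ only; this is exactly why (2) demands the extra regularity $g\in H^{s+s_1+1}$, and it dictates the form of the statement. The remainder is bookkeeping: checking that the two regimes exhaust $\mathbb{Z}^2$, handling the degenerate indices $k=0$ and $k_1=0$ (where $n$ vanishes and $|m|\lesssim\langle k-k_1\rangle^{s+1}$), and noting that nothing beyond Young's inequality and the embedding $H^{s_1}\hookrightarrow\mathcal{F}\ell^1$ is needed. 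A paraproduct (Coifman--Meyer) argument would give the same conclusions, but the Fourier-series computation above is self-contained on $\mathbb{T}$.
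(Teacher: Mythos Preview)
Your argument is correct: the Fourier-side reduction to the symbol bounds $|m|,|n|\lesssim\langle k-k_1\rangle\langle k_1\rangle^s+\langle k-k_1\rangle^{s+1}$ via the mean value theorem, followed by Young's inequality together with the embedding $H^{s_1}\hookrightarrow\mathcal F\ell^1$ for $s_1>1/2$, handles both estimates cleanly; your identification of the high--high regime $|k_1|\sim|k-k_1|$ as the place forcing the extra $\|g\|_{H^{s+s_1+1}}$ in (2) is exactly the point. The paper itself does not prove this lemma but quotes it from Wu--Zhao, so there is no in-paper argument to compare against; your self-contained Fourier-series computation is a standard and efficient route.
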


\section{Uniform local well-posedness for the KdV-B equation}
We present below an alternative method for proving local and global well-posedness for $H^s$ data on $\mathbb{T}$. The method can be summarized as changing variables and differentiating by
parts in the time variable.

Without loss of generality, we can assume that the integral of the initial value $u_0$ is zero,
i.e. $\mathbb{P}_0u_0=0$
and $\mathbb{P}u_0 = u_0$,
where $\mathbb{P}_0$ and $\mathbb{P}$ are the zero-mode and nonzero-mode projection
operators defined in \eqref{22-1}. Since $\mathbb{P}_0u_0=0$, the conservation law
$\int_{\mathbb{T}}u(t,x)dx=\int_{\mathbb{T}}u_0(x) dx$
of KdV-Burgers equation implies that
$\mathbb{P}_0u(t,\cdot)=0$ for all $t\geq 0$.

By introducing the twisted variable $v(t,x)$
$$v(t,x): = e^{t(\partial_x^3-\varepsilon\partial_x^2)}u(t,x), \quad t\geq 0, x\in \mathbb{T},$$
the KdV-Burgers equation in \eqref{00} can be written as
\begin{equation*}
\partial_t v(t,x)=e^{t(\partial_x^3-\varepsilon\partial_x^2)}\partial_x\left(e^{-t(\partial_x^3-\varepsilon\partial_x^2)}v(t,x)\right)^2,\quad t\geq 0, x\in\mathbb{T}.
\end{equation*}

Using the Fourier transform, we can have
\begin{align}
\partial_t\hat{v}_k={ik}\sum_{k_1+k_2=k}e^{-isQ_1(k,k_1,k_2)}\hat{v}_{k_1}\hat{v}_{k_2},\label{1-2}
\end{align}
where $Q_1(k,k_1,k_2)=3kk_1k_2+i\varepsilon(k^2-k_1^2-k_2^2)$.

Integrating both sides from 0 to $t$, we have
\begin{align}
\hat{v}_k(t)=\hat{v}_k(0)+\int_0^t{ik}\sum_{k_1+k_2=k}e^{-i\tau Q_1(k,k_1,k_2)}\hat{v}_{k_1}\hat{v}_{k_2}d\tau.\label{1-5}
\end{align}
 Therefore any given solution is also
a solution of the integral equation \eqref{1-5} below.
Indeed, it suffices to check that for each $k$, one can change the order of sum and differentiation.
Since
$e^{-itQ_1(k,k_1,k_2)}=\partial_t\left(\frac{ie^{-itQ_1(k,k_1,k_2)}}{Q_1(k,k_1,k_2)}\right),$
we consider the integration by parts and rewrite $\eqref{1-5}$ as
\begin{align}
\hat{v}_k(t)&=\hat{v}_k(0)-\sum_{k_1+k_2=k}\frac{ke^{-i\tau Q_1(k,k_1,k_2)}}{Q_1(k,k_1,k_2)}\hat{v}_{k_1}\hat{v}_{k_2}\Big|_{0}^t\nonumber
\\ &\quad+\int_0^t\sum_{k_1+k_2=k}\frac{ke^{-i\tau Q_1(k,k_1,k_2)}}{Q_1(k,k_1,k_2)}
\big(\partial_\tau\hat{v}_{k_1}\hat{v}_{k_2}+\partial_{\tau}\hat{v}_{k_2}\hat{v}_{k_1}\big)d\tau.\label{1-3}
\end{align}
Since $\mathbb{P}_0(v) = 0$, the terms corresponding to $k_1=0$ or $k_2=0$ are not actually present in the above sums. The last two terms are symmetric with respect to $k_1$ and $k_2$ and thus we can consider only one of them.

Substituting \eqref{1-2} into $\partial_t\hat{v}_{k_1}$, then the first term of \eqref{1-3} yields that
\begin{align*}
&\quad \>k\sum_{k_{1}+k_{2}=k}\frac{e^{-itQ_1(k,k_1,k_2)}}{Q_1(k,k_1,k_2)}\partial_t\hat{v}_{k_1}\hat{v}_{k_2}\\
& = k\sum_{k_{1}+k_{2}=k}\frac{e^{-itQ_1(k,k_1,k_2)}}{Q_1(k,k_1,k_2)}\Big(\sum_{\mu+\lambda=k_1}ik_1 e^{-itQ_1(k_1,\mu,\lambda)}\hat{v}_{\mu}\hat{v}_{\lambda}\Big)\hat{v}_{k_2}\\
&
=
\sum_{\mu+\lambda+k_{2}=k}\frac{ik(\mu+\lambda)}{Q_1(k,\mu+\lambda,k_2)} e^{-itQ_2(k,\mu,\lambda,k_2)}\hat{v}_{\mu}\hat{v}_{\lambda}\hat{v}_{k_2}\\
&=\sum_{k_1+k_2+k_3=k} \frac{{ik(k_1+k_2)}e^{-itQ_2(k,k_1,k_2,k_3)}}{Q_1(k,k_1+k_2,k_3)}\hat{v}_{k_1}\hat{v}_{k_2}\hat{v}_{k_3},
\end{align*}
where
$Q_2(k,\mu,\lambda,k_2)=Q_1(k,\mu+\lambda,k_2)+Q_1(\mu+\lambda,\mu,\lambda)$.
Now, relabel the variables in the summation as:
$\mu \to k_1$,  $\lambda \to k_2$,  $k_2 \to k_3$.

Formula \label{1-4} can be expressed alternatively as
\begin{align}\hat{v}_k(t)=\hat{v}_k(0)-\widehat{A}_2(v,v)_k(t)+\widehat{A}_2(v,v)_k(0)+2\int_0^t\widehat{B}_{3}(v,v,v)_{k}(\tau)d\tau,\label{1-7}
\end{align}
where
\begin{align}
\widehat{A}_2(u,v)_{k}= \sum_{k_{1}+k_{2}=k}\frac{k e^{-itQ_1(k,k_1,k_2)}}{Q_1(k,k_1,k_2)}
\hat{u}_{k_1}\hat{v}_{k_2},
\end{align}
$$\widehat{B}_{3}(u,v,w)_{k}=\sum_{k_{1}+k_{2}+k_{3}=k}\frac{ik (k_1+k_2) e^{-itQ_2(k,k_1,k_2,k_3)}}{Q_1(k,k_1+k_2,k_3)}\hat{u}_{k_1}\hat{v}_{k_2}\hat{w}_{k_3}.$$

For $\widehat{B}_{3}(v,v,v)_{k}$, we decompose it into two parts, resonant term $\widehat{R}^0_{3}(v,v,v)_{k}$ and non-resonant term $\widehat{R}_{3}(v,v,v)_{k}$, corresponding to the sets $\Gamma_0(k)$ and $\Gamma(k)$ respectively, where $\Gamma_0(k)$ and $\Gamma(k)$ defined in \eqref{0-8} and \eqref{0.1}.
Then we have
\begin{align}
\widehat{B}_{3}(u,v,w)_{k}&=\sum_{(k_1,k_2,k_3)\in\Gamma_0(k)}\frac{ik (k_1+k_2) e^{-itQ_2(k,k_1,k_2,k_3)}}{Q_1(k,k_1+k_2,k_3)}\hat{u}_{k_1}\hat{v}_{k_2}\hat{w}_{k_3}\\
\nonumber
&\quad+\sum_{(k_1,k_2,k_3)\in\Gamma(k)}\frac{ik (k_1+k_2) e^{-itQ_2(k,k_1,k_2,k_3)}}{Q_1(k,k_1+k_2,k_3)}\hat{u}_{k_1}\hat{v}_{k_2}\hat{w}_{k_3}\nonumber\\
&=:\widehat{R}^0_{3}(v,v,v)_{k}+\widehat{R}_{3}(v,v,v)_{k}.\label{99-9}
\end{align}
For the term $\widehat{R}^0_{3}(v,v,v)_{k}$, we consider configurations where $(k_1+k_2)(k_1+k_3)(k_2+k_3)=0.$
Since $k_1+k_2\neq 0,$ we have $k_1+k_3=0$ or $k_2+k_3=0$.  In this case, the phase function simplifies to $Q_2(k,k_1,k_2,k_3)=i\varepsilon(k^2-k_1^2-k_2^2-k_3^2),$ then we have the following expression:
\begin{align}
\widehat{R}^0_{3}(v,v,v)_{k}&=\sum_{\Gamma_0(k)}\frac{ik (k_1+k_2)e^{t\varepsilon(k^2-k_1^2-k_2^2-k_3^2)}}{Q_1(k,k_1+k_2,k_3)}\hat{v}_{k_1}\hat{v}_{k_2}\hat{v}_{k_3}\nonumber\\
&=\sum_{\Gamma_0(k)}\frac{ik (k_1+k_2) e^{t\varepsilon k^2}}{Q_1(k,k_1+k_2,k_3)}{\mathcal{F}_{k_1}[e^{t\varepsilon\partial_x^2}v] \mathcal{F}_{k_2}[e^{t\varepsilon\partial_x^2}v]  \mathcal{F}_{k_3}[e^{t\varepsilon\partial_x^2}v]}. \label{0.2}
\end{align}%
Next, for the term $\widehat{R}_{3}(v,v,v)_{k}$, we consider configurations where
$(k_1+k_2)(k_1+k_3)(k_2+k_3)\neq 0.$ We can differentiate by parts one more time, i.e.,
$e^{-iQ_2(k,k_1,k_2,k_3)t}
=\partial_{t}\left(\frac{ie^{-itQ_2(k,k_1,k_2,k_3)}}{Q_2(k,k_1,k_2,k_3)}\right).$
Then
\begin{align*}
&\widehat{R}_{3}(v,v,v)_{k}\\
&=-\partial_t \Big(\sum_{\Gamma(k)}\frac{k (k_1+k_2) e^{-itQ_2(k,k_1,k_2,k_3)}}{Q_1(k,k_1+k_2,k_3)Q_2(k,k_1,k_2,k_3)}\hat{v}_{k_1}\hat{v}_{k_2}\hat{v}_{k_3}\Big)\\
&\quad+\sum_{\Gamma(k)}\frac{k(k_1+k_2)e^{-itQ_2(k,k_1,k_2,k_3)}}{Q_1(k,k_1+k_2,k_3)Q_2(k,k_1,k_2,k_3)}
\big(\partial_t\hat{v}_{k_1}\hat{v}_{k_2}\hat{v}_{k_3}+\partial_t\hat{v}_{k_2}\hat{v}_{k_1}\hat{v}_{k_3}+\partial_t\hat{v}_{k_3}\hat{v}_{k_1}\hat{v}_{k_2}\big).
\end{align*}
The term corresponding to $\partial_t\hat{v}_{k_1}$ is
\begin{align}
&\>\sum_{k_{1}+k_{2}+k_{3}=k}\frac{k(k_1+k_2)e^{-itQ_2(k,k_1,k_2,k_3)}}{Q_1(k,k_1+k_2,k_3)Q_2(k,k_1,k_2,k_3)}
\partial_t\hat{v}_{k_1}\hat{v}_{k_2}\hat{v}_{k_3}\nonumber\\
&=\sum_{k_{1}+k_{2}+k_{3}=k}\frac{k(k_1+k_2)
e^{-itQ_2(k,k_1,k_2,k_3)}}{Q_1(k,k_1+k_2,k_3)Q_2(k,k_1,k_2,k_3)}\Big(\sum_{k_1=\mu+\lambda}ik_1 e^{-itQ_1(k_1,\mu,\lambda)}\hat{v}_{\mu}\hat{v}_{\lambda}\Big)\hat{v}_{k_2}\hat{v}_{k_3}\nonumber\\
&=\sum_{k_{1}+k_{2}+k_{3}+k_4=k}\frac{ik(k_1+k_2+k_3)(k_1+k_2)
e^{-itQ_3(k,k_1,k_2,k_3,k_4)}}{Q_1(k,k_1+k_2+k_3,k_4)Q_2(k,k_1+k_2,k_3,k_4)}\hat{v}_{k_1}\hat{v}_{k_2}\hat{v}_{k_3}\hat{v}_{k_4},\label{99-10}
\end{align}
where $Q_3(k,k_1,k_2,k_3,k_4)=Q_2(k,k_1,k_2,k_3)+Q_1(k_1+k_2,k_1,k_2).$ In the last line, we rename the variables as $k_1=\mu$, $k_2=\lambda$, $k_3=k_2$, $k_4=k_3.$

Similarly, the second term corresponding to
$\partial_t\hat{v}_{k_2}$ can be obtained by the same method. Then the third term corresponding to $\partial_t\hat{v}_{k_3}$ is
\begin{align}
&\>\sum_{k_{1}+k_{2}+k_{3}=k}\frac{k(k_1+k_2)e^{-itQ_2(k,k_1,k_2,k_3)}}{Q_1(k,k_1+k_2,k_3)Q_2(k,k_1,k_2,k_3)}
\partial_t\hat{v}_{k_3}\hat{v}_{k_1}\hat{v}_{k_2}\nonumber\\
&=\sum_{k_1+k_{2}+k_{3}+k_4=k}\frac{ik(k_1+k_2)(k_3+k_4)e^{-itQ_3(k,k_1,k_2,k_3,k_4)}}{Q_1(k,k_1+k_2,k_3+k_4)Q_2(k,k_1,k_2,k_3+k_4)}
\hat{v}_{k_1}\hat{v}_{k_2}\hat{v}_{k_3}\hat{v}_{k_4},\label{99-11}
\end{align}
Hence, combining the resonant term decomposition \eqref{0.2} and the differentiation by parts procedure for the non-resonant term \eqref{99-10} and \eqref{99-11}, we obtain the complete structure of \eqref{99-9}
\begin{align*}
\widehat{B}_3(v,v,v)_k=-\partial_t\widehat{A}_3(v,v,v)_k+\widehat{R}^0_{3}(v,v,v)_{k}+2\widehat{A}^1_4(v,v,v,v)_k+\widehat{A}^2_4(v,v,v,v)_k,
\end{align*}
where the operators are defined as follows:
\begin{align}
\widehat{A}_3(u,v,w)_k&=\sum_{k_{1}+k_{2}+k_{3}=k}\frac{k (k_1+k_2) e^{-itQ_2(k,k_1,k_2,k_3)}}{Q_1(k,k_1+k_2,k_3)Q_2(k,k_1,k_2,k_3)}\hat{v}_{k_1}\hat{w}_{k_2}\hat{u}_{k_3}.\label{0.3}\\
\widehat{A}_4^1(u,v,w,z)&=\sum_{k_{1}+k_{2}+k_{3}+k_4=k}\frac{ik(k_1+k_2+k_3)(k_1+k_2)
e^{-itQ_3(k,k_1,k_2,k_3,k_4)}}{Q_1(k,k_1+k_2+k_3,k_4)Q_2(k,k_1+k_2,k_3,k_4)}\hat{u}_{k_1}\hat{v}_{k_2}\hat{w}_{k_3}\hat{z}_{k_4}.\nonumber\\
\widehat{A}_4^2(u,v,w,z)&=\sum_{k_1+k_{2}+k_{3}+k_4=k}\frac{ik(k_1+k_2)(k_3+k_4)e^{-itQ_3(k,k_1,k_2,k_3,k_4)}}{Q_1(k,k_1+k_2,k_3+k_4)Q_2(k,k_1,k_2,k_3+k_4)}
\hat{u}_{k_1}\hat{v}_{k_2}\hat{w}_{k_3}\hat{z}_{k_4}.\nonumber
\end{align}
If we put everything together and combine the terms into \eqref{1-7}, 
 we obtain
\begin{align}
\hat{v}_k(t)=\hat{v}_k(0)-\widehat{A}(v)_k(t)+\widehat{A}(v)_k(0)+2\int_0^t\widehat{B}(v)_k(\tau)d\tau,\label{1-16}
\end{align}
where
\begin{align*}
&\widehat{A}(v)_k=\widehat{A}_2(v,v)_k+\widehat{A}_3(v,v,v)_k,\\
&\widehat{B}(v)_k=\widehat{R}^0_{3}(v,v,v)_{k}+2\widehat{A}^1_4(v,v,v,v)_k+\widehat{A}^2_4(v,v,v,v)_k.
\end{align*}

Combining the expression of \eqref{1-5} and \eqref{1-16}, we will consider high and low frequencies separately  to close the contraction.
Fix $N$ large to be determined later. Define the operator $\Phi$ as follows:
\begin{align}
\Phi(v)_k(t)=
\left\{
\begin{array}{ll}
\hat{v}_k(0)-\widehat{A}(v)_k(t)+\widehat{A}(v)_k(0)+\displaystyle2\int_0^t\widehat{B}(v)_k(\tau)d\tau & \text{if } |k|>N,\\
\>
\hat{v}_k(0)+\displaystyle\int_0^t{ik}\displaystyle\sum_{k_1+k_2=k}e^{-3i\tau kk_1k_2}e^{2\tau\varepsilon k_1k_2}\hat{v}_{k_1}\hat{v}_{k_2}d\tau
& \text{if } |k|\leq N.
\end{array}\right.\label{1-13}
\end{align}

Using the relation $u(t)=e^{-t(\partial_x^3-\varepsilon\partial_x^2)}v(t)$, denote $\tilde{\Phi}(v)=\mathcal{F}^{-1}[\Phi(v)_k]$, we have
\begin{align*}
\tilde{\Phi}(u)(t)=e^{-t(\partial_x^3-\varepsilon\partial_x^2)}\tilde{\Phi}(v)(t).
\end{align*}
We note that $e^{-t\partial_x^3}$ is a linear isometry on $L^2$ for all $t\in R$, therefore
\begin{align*}
\|\tilde{\Phi}(u)(t)\|_{H^s}=\|e^{t\varepsilon\partial_x^2}\tilde{\Phi}(v)(t)\|_{H^s}.
\end{align*}
Before proving the theorem we aim to obtain, we need a lemma to assist in the proof.
At this point, we need some estimates.
\begin{lem}\label{0-9}
Let $s\geq 0$, we have the following inequalities hold:
\begin{align*}
&\|\mathbb{P}_{>N}e^{t\varepsilon\partial_x^2}A_2(u,v)\|_{H^{s}}\lesssim N^{-1}\|e^{t\varepsilon\partial_x^2}u\|_{H^s}\|e^{t\varepsilon\partial_x^2}v\|_{H^s}.\\
&\|\mathbb{P}_{>N}e^{t\varepsilon\partial_x^2}A_3(u,v,w)\|_{H^{s}}\lesssim N^{-{1}}\|e^{t\varepsilon\partial_x^2}u\|_{H^s}\|e^{t\varepsilon\partial_x^2}v\|_{H^s}\|e^{t\varepsilon\partial_x^2}w\|_{H^s}.\\
&\|\mathbb{P}_{>N}e^{t\varepsilon\partial_x^2}B^0_3(u,v,w)\|_{H^{s}}\lesssim \|e^{t\varepsilon\partial_x^2}u\|_{H^s}\|e^{t\varepsilon\partial_x^2}v\|_{H^s}\|e^{t\varepsilon\partial_x^2}w\|_{H^s}.\\
&\|\mathbb{P}_{>N}e^{t\varepsilon\partial_x^2}A^{i}_{4}(u,v,w,z)\|_{H^{s}}\lesssim\|e^{t\varepsilon\partial_x^2}u\|_{H^s}
\|e^{t\varepsilon\partial_x^2}v\|_{H^s}\|e^{t\varepsilon\partial_x^2}w\|_{H^s}\|e^{t\varepsilon\partial_x^2}z\|_{H^s},i=1,2.
\end{align*}
\end{lem}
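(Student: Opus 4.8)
\emph{Overview of the plan.} The engine is an algebraic identity that removes the $\varepsilon$-dependence from the start. I would first observe that every phase $Q_j$ appearing in $A_2,A_3,B_3^0,A_4^i$ has the form $Q_j=(\text{real dispersive part})+i\varepsilon\big(k^2-\sum_\ell k_\ell^2\big)$, the sum running over the frequencies of the inputs: for $Q_1,Q_2$ this is read off from the computations preceding Lemma \ref{0-5}, and for $Q_3=Q_2(k,k_1{+}k_2,k_3,k_4)+Q_1(k_1{+}k_2,k_1,k_2)$ the imaginary parts telescope to $i\varepsilon(k^2-k_1^2-k_2^2-k_3^2-k_4^2)$. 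Thus $e^{-itQ_j}$ factors as a modulus-one oscillation times $e^{t\varepsilon k^2}\prod_\ell e^{-t\varepsilon k_\ell^2}$. Since $e^{t\varepsilon\partial_x^2}$ has Fourier symbol $e^{-t\varepsilon k^2}$ and commutes with $\mathbb{P}_{>N}$, precomposing with it cancels $e^{t\varepsilon k^2}$ and pushes one factor $e^{-t\varepsilon k_\ell^2}=\mathcal F_{k_\ell}[e^{t\varepsilon\partial_x^2}(\cdot)]$ onto each input. Writing $w_1=e^{t\varepsilon\partial_x^2}u$, $w_2=e^{t\varepsilon\partial_x^2}v$, etc.\ (exactly the quantities on the right-hand sides), this yields identities $e^{t\varepsilon\partial_x^2}A_2(u,v)=\widetilde A_2(w_1,w_2)$, $e^{t\varepsilon\partial_x^2}A_3(u,v,w)=\widetilde A_3(w_1,w_2,w_3)$, and likewise for $B_3^0$ and $A_4^i$, where the tilded operators retain $Q_1,Q_2$ in the denominators but replace $e^{-itQ_j}$ by the pure oscillation. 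The point is that Lemma \ref{0-5} gives $|Q_1(k,k_1,k_2)|\gtrsim|kk_1k_2|$ and $|Q_2|\gtrsim|\tilde\phi|$ \emph{uniformly for $\varepsilon\in[0,1]$}, so nothing is lost, and the remaining task is a set of $\varepsilon$-free multilinear estimates.

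\emph{Reduction to Fourier majorants.} Next I would drop the oscillatory factors by the triangle inequality and insert the Lemma \ref{0-5} denominator bounds. For $\widetilde A_2$ this produces $|\mathcal F_k[\widetilde A_2(w_1,w_2)]|\lesssim\sum_{k_1+k_2=k}|k_1|^{-1}|k_2|^{-1}|\widehat w_{1,k_1}||\widehat w_{2,k_2}|$. For $\widetilde A_3$ (summed over $\Gamma(k)$), using $|Q_1(k,k_1{+}k_2,k_3)|\gtrsim|k|\,|k_1{+}k_2|\,|k_3|$ one lands exactly on the majorant $\sum_{\Gamma(k),\,k_3\neq0}|k_3|^{-1}|\tilde\phi|^{-1}|\widehat w_{1,k_1}\widehat w_{2,k_2}\widehat w_{3,k_3}|$ defining $C_2$ in Lemma \ref{0-6}; for $B_3^0$ (summed over $\Gamma_0(k)$, so $k_1{+}k_3=0$ or $k_2{+}k_3=0$, whence the $Q_2$-phase is real and $|Q_1|\gtrsim|k_3|\,|k_1{+}k_2|\,|k|$) one lands on the $C_1$ majorant; and for $A_4^1,A_4^2$ the numerator is cubic while $Q_1Q_2$ is sextic in the frequencies, so the multiplier is dominated, in each configuration, by a product of reciprocals of three of the linear combinations of $k_1,\dots,k_4$ that appear.

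\emph{Closing the estimates.} For $B_3^0$ I would just quote Lemma \ref{0-6}(1), and for $A_4^i$ run three $\ell^2\!\to\!\ell^1$ Cauchy--Schwarz reductions inside the convolution sum (each reciprocal factor makes one sequence $\ell^1$-summable, since $\||k|^{-1-s}\|_{\ell^2}<\infty$ for $s\geq0$) together with $|k|^s\lesssim\prod_j\langle k_j\rangle^s$; no power of $N$ is needed. To get the $N^{-1}$ for $A_2$ and $A_3$: on $\mathrm{supp}\,\mathbb{P}_{>N}$ one has $|k|>N$, so $|k_m|:=\max(|k|,|k_1|,|k_2|,|k_3|)>N$, and after relabelling I may put the largest frequency on $w_1$. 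For $\widetilde A_2$, in the region $|k_1|\geq|k_2|$ (the other region symmetric) use $|k|^s\lesssim|k_1|^s$ and $|k_1|^{-1}\leq N^{-1}$, then Young plus Cauchy--Schwarz --- only $s\geq0$ is used. For $\widetilde A_3$, Lemma \ref{0-6}(2) gives $\|\widetilde A_3(w_1,w_2,w_3)\|_{H^s}\lesssim\|w_1\|_{H^{s-1}}\|w_2\|_{H^s}\|w_3\|_{H^s}$, and since the sum is restricted to $|k_1|=|k_m|>N$ I bound $\|w_1\|_{H^{s-1}}\leq N^{-1}\|w_1\|_{H^s}$, producing the claimed gain.

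\emph{Main difficulty.} The only genuinely laborious part will be the quartic operators $A_4^1,A_4^2$: I expect to need a case split according to which of $k_1{+}k_2$, $k_1{+}k_2{+}k_3$, $k_4$, $k{-}k_3$, $k{-}k_4$, $k_3{+}k_4$ (the factors emerging from the two denominators $Q_1$ and $Q_2$) are comparable to $|k_m|$ and which are small, in order to confirm that three usable reciprocal factors always remain. This is bookkeeping rather than conceptual work; the conjugation identity of the first step and the $\varepsilon$-uniform bounds of Lemma \ref{0-5} are what make the whole argument possible and reduce the last point to a finite, $\varepsilon$-free case analysis.
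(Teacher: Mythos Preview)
Your plan is correct and coincides with the paper's argument: the conjugation identity that strips off the $e^{t\varepsilon(k^2-\sum k_\ell^2)}$ factor, the reduction of $A_3$ and $B_3^0$ to the majorants $C_2$ and $C_1$ of Lemma~\ref{0-6}, and the extraction of $N^{-1}$ from $\|w_1\|_{H^{s-1}}\le N^{-1}\|w_1\|_{H^s}$ on the high-frequency input are exactly what the paper does. The only place the paper is more economical is $A_4^1$: rather than the case analysis you anticipate, it disposes of the stray numerator factor by the single triangle inequality $|k_1+k_2|\lesssim|k_1+k_2+k_4|+|k_4|$, which splits $A_4^1$ into two pieces each carrying a clean product of three reciprocals (matching the shape of $A_4^2$), and then closes both by pairing with $h\in H^{-s}$, using $|k|^s\big/\prod_j|k_j|^s\lesssim 1$, and a single Cauchy--Schwarz.
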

\begin{proof}
For the formula $Q_1(k,k_1,k_2)=3kk_1k_2+i\varepsilon(k^2-k_1^2-k_2^2)$, we apply Lemma \ref{0-5} to obtain
 $|Q_1(k,k_1,k_2)|\gtrsim |k||k_1||k_2|$. The symmetry of $k_1$ and $k_2$ allows us to assume that $|k_1|\geq |k_2|$, which implies $|k_1|\gtrsim|k|$. Thus, we have $|k_1|^s\gtrsim|k|^s$ for $s\geq 0$, then
\begin{align*}
\|e^{t\varepsilon\partial_x^2}A_2(u,v)\|_{H^s}&=\||k|^s\mathcal{F}_k[e^{t\varepsilon\partial_x^2}A_2(u,v)]\|_{L^2}\\
&\lesssim \Big\|\sum_{k_{1}+k_{2}=k}
\frac{|k^{s+1}||e^{-3ikk_1k_2t}|}{|Q(k,k_1,k_2)|}
\mathcal{F}_{k_1}[e^{t\varepsilon \partial_x^2}u]\mathcal{F}_{k_2}[{e^{t\varepsilon\partial_x^2}}v]\Big\|_{L^2}\\
&\lesssim \Big\|\sum_{k_{1}+k_{2}=k}
|k|^s|k_1|^{-1}|k_2|^{-1}\mathcal{F}_{k_1}[e^{t\varepsilon \partial_x^2}u]\mathcal{F}_{k_2}[{e^{t\varepsilon \partial_x^2}}v]\Big\|_{L^2}\\
&\lesssim\Big\|\sum_{k_{1}+k_{2}=k}
\frac{|k_1|^s|k_2|^{s}}{|k_1||k_2|^{1+s}}\mathcal{F}_{k_1}[e^{t\varepsilon \partial_x^2}u]\mathcal{F}_{k_2}[{e^{t\varepsilon \partial_x^2}}v]\Big\|_{L^2}\\
&\lesssim \||k|^{-1}|k|^s\mathcal{F}_{k}[e^{t\varepsilon \partial_x^2}u]\|_{L^2}\Big\|\frac{|k|^s\mathcal{F}_{k}[{e^{t\varepsilon \partial_x^2}}v]}{|k|^{1+s}}\Big\|_{L^1}.
\end{align*}
Since $s\geq0$, we have $|k|^{-1-s}\lesssim |k|^{-1}$.
And in the last line, using Young's inequality and Cauchy-Schwarz inequality, we have
\begin{align*}
\|\mathbb{P}_{>N}e^{t\varepsilon\partial_x^2}A_2(u,v)\|_{H^s}
\lesssim N^{-1}\|e^{t\varepsilon \partial_x^2}u\|_{H^s}\|e^{t\varepsilon \partial_x^2}v\|_{H^s}.
\end{align*}

For $A_3(u,v,w)_k$ and $R_3^0(u,v,w)_k$ defined in \eqref{0.2} and \eqref{0.3}, by Lemma \ref{0-5}, we have $|Q_1|^{-1}\lesssim (|k||k_1||k_2|)^{-1}$ and $|Q_2|^{-1}\lesssim |\tilde{\phi}|^{-1}$, $\tilde{\phi}=3(k_1+k_2)(k_1+k_3)(k_2+k_3)$, then we obtain
\begin{align*}
&|\mathcal{F}_k[e^{t\varepsilon \partial_x^2}A_3(u,v,w)]|
\lesssim \sum_{k_3\neq 0,\Gamma(k)}\frac{\big|\mathcal{F}_{k_1}[e^{t\varepsilon \partial_x^2}u]\mathcal{F}_{k_2}[e^{t\varepsilon \partial_x^2}v]\mathcal{F}_{k_3}[e^{t\varepsilon \partial_x^2}w]\big|}{|k_3||(k_1+k_2)(k_1+k_3)(k_2+k_3)|}.\\\label{099}
&|\mathcal{F}_k[e^{t\varepsilon\partial_x^2}R_3^0(u,v,w)]|
\lesssim  \sum_{k_3\neq 0, \Gamma_0(k)}|k_3|^{-1}\big|\mathcal{F}_{k_1}[e^{t\varepsilon\partial_x^2}u]\mathcal{F}_{k_2}[e^{t\varepsilon\partial_x^2}v]
\mathcal{F}_{k_3}[e^{t\varepsilon\partial_x^2}w]\big|.
\end{align*}
Using Lemma \ref{0-6}, we can obtain
\begin{align*}
\|\mathbb{P}_{>N}e^{t\varepsilon\partial_x^2}R_3^0(u,v,w)\|_{H^s}
&\lesssim\|e^{t\varepsilon\partial_x^2}u\|_{L_t^{\infty}H_x^s}\|e^{t\varepsilon\partial_x^2}u\|_{L_t^{\infty}H_x^s}
\|e^{t\varepsilon\partial_x^2}u\|_{L_t^{\infty}H_x^s},\\
\|\mathbb{P}_{>N}e^{t\varepsilon \partial_x^2}A_3(u,v,w)\|_{L_t^{\infty}H_x^s}
&\lesssim N^{-1}\|e^{t\varepsilon \partial_x^2}u\|_{L_t^{\infty}H_x^s}\|e^{t\varepsilon \partial_x^2}v\|_{L_t^{\infty}H_x^s}\|e^{t\varepsilon \partial_x^2}w\|_{L_t^{\infty}H_x^s}.
\end{align*}

For $e^{t\varepsilon\partial_x^2}A^i_4(t)$, since $(k_1+k_2,k_3,k_4)\in \Gamma(k)$, we have $(k_1+k_2+k_3)(k_1+k_2+k_4)(k_3+k_4)\neq 0.$ Then we can obtain
\begin{align}
&|\mathcal{F}_k[e^{t\varepsilon\partial_x^2}A^1_4(u,v,w,z)]|= |e^{-t\varepsilon k^2}A^1_4(u,v,w,z)_k|\nonumber\\
&\lesssim \sum_{k_1+k_{2}+k_{3}+k_4=k}\frac{|k_1+k_2||\mathcal{F}_{k_1}[e^{t\varepsilon\partial_x^2}u]\mathcal{F}_{k_2}[e^{t\varepsilon\partial_x^2}v]
\mathcal{F}_{k_3}[e^{t\varepsilon\partial_x^2}w]\mathcal{F}_{k_4}[e^{t\varepsilon\partial_x^2}z]|}{|k_4||k_3+k_4||k_1+k_2+k_3||k_1+k_2+k_4|}.
\end{align}
Since
$|k_1+k_2|\lesssim |k_1+k_2+k_4|+|k_4|$,
\begin{align*}
|\mathcal{F}_k[e^{t\varepsilon\partial_x^2}A^1_4(u,v,w,z)]|
&\lesssim
\sum_{k_1+k_{2}+k_{3}+k_4=k}\frac{\mathcal{F}_{k_1}[e^{t\varepsilon\partial_x^2}u]\mathcal{F}_{k_2}[e^{t\varepsilon\partial_x^2}v]
\mathcal{F}_{k_3}[e^{t\varepsilon\partial_x^2}w]\mathcal{F}_{k_4}[e^{t\varepsilon\partial_x^2}z]}{|k_4||k_3+k_4||k_1+k_2+k_3|}\\
&\quad+\sum_{k_1+k_{2}+k_{3}+k_4=k}\frac{\mathcal{F}_{k_1}[e^{t\varepsilon\partial_x^2}u]\mathcal{F}_{k_2}[e^{t\varepsilon\partial_x^2}v]
\mathcal{F}_{k_3}[e^{t\varepsilon\partial_x^2}w]\mathcal{F}_{k_4}[e^{t\varepsilon\partial_x^2}z]}{|k_3+k_4||k_1+k_2+k_3||k_1+k_2+k_4|},
\end{align*}
and
\begin{align*}
&|\mathcal{F}_k[e^{t\varepsilon\partial_x^2}A^2_4(u,v,w,z)]|
&\lesssim\sum_{k_1+k_{2}+k_{3}+k_4=k}\frac{\mathcal{F}_{k_1}[e^{t\varepsilon\partial_x^2}u]\mathcal{F}_{k_2}[e^{t\varepsilon\partial_x^2}v]
\mathcal{F}_{k_3}[e^{t\varepsilon\partial_x^2}w]\mathcal{F}_{k_4}[e^{t\varepsilon\partial_x^2}z]}{|k_3+k_4||k_1+k_2+k_3||k_1+k_2+k_4|}.
\end{align*}
Both terms of $A^1_4$ and $A^2_4$ are treated in the same way and satisfy the same bound by using Cauchy-Schwartz inequality. For $A^2_4$,
by duality, it suffices to estimate
\[
|\langle e^{t\varepsilon\partial_x^2}A^2_4(u,v,w,z), h\rangle| \lesssim \|u\|_{{H}^s} \|v\|_{{H}^s} \|z\|_{{H}^{s}}\|h\|_{H^{-s}}.
\]
where \( h \) is an arbitrary element in \( {H}^{-s} \).

Setting below \( \widehat{\tilde{u}}_k = \hat{u}_k |k|^s \), \( \widehat{\tilde{v}}_k = \hat{v}_k |k|^s \), \( \widehat{\tilde{w}}_k = \hat{w}_k |k|^s \), \( \widehat{\tilde{z}}_k = \hat{z}_k |k|^s \) and \( \widehat{\tilde{h}}_k = \hat{h}_k |k|^{-s} \) for each element $k$ in the sequence.
This yields that
\begin{align*}
&
\big|\langle e^{t\varepsilon\partial_x^2}A^2_4(u,v,w,z), h\rangle\big| \\
&\lesssim \sum_k \sum_{k_1 + k_2+k_3+k_4 = k} \frac{\mathcal{F}_{k_1}[e^{t\varepsilon\partial_x^2}u]\mathcal{F}_{k_2}[e^{t\varepsilon\partial_x^2}v]
\mathcal{F}_{k_3}[e^{t\varepsilon\partial_x^2}w]\mathcal{F}_{k_4}[e^{t\varepsilon\partial_x^2}z]\mathcal{F}_{k}[h]}
{|k_3+k_4||k_1+k_2+k_3||k_1+k_2+k_4|}\\
&\lesssim\sum_{k_1,k_2,k_3,k_4}
\frac{|k|^s\mathcal{F}_{k_1}[e^{t\varepsilon\partial_x^2}\tilde{u}]\mathcal{F}_{k_2}[e^{t\varepsilon\partial_x^2}\tilde{v}]
\mathcal{F}_{k_3}[e^{t\varepsilon\partial_x^2}\tilde{w}]\mathcal{F}_{k_4}[e^{t\varepsilon\partial_x^2}\tilde{z}]
\mathcal{F}_{{k_1+k_2+k_3+k_4}}[\tilde{h}]}{|k_1|^s|k_2|^s|k_3|^s|k_4|^s|k_3+k_4||k_1+k_2+k_3||k_1+k_2+k_4|}.
\end{align*}
Let $M=\max\{|k_1|,|k_2|,|k_3|,|k_4|\}$, since  $(|k_1||k_2||k_3||k_4|)^{-s}\lesssim M^{-s}$ and $|k|^s\lesssim M^s$ for $s\geq 0$, then we have
$\frac{|k|^s}{|k_1|^s|k_2|^s|k_3|^s|k_4|^s}\lesssim 1$.

 By applying the Cauchy-Schwarz  inequality, we can derive the following result:
\begin{align*}
\bigl| \bigl\langle e^{t\varepsilon\partial_x^2} A^2_4(u,v,w,z), h \bigr\rangle \bigr|
&\lesssim \biggl( \sum_{{k_1,k_2, k_3,k_4}}
    \Bigl| \mathcal{F}_{k_1}[e^{t\varepsilon\partial_x^2}\tilde{u}] \,
    \mathcal{F}_{k_2}[e^{t\varepsilon\partial_x^2}\tilde{v}] \,
    \mathcal{F}_{k_3}[e^{t\varepsilon\partial_x^2}\tilde{w}] \,
    \mathcal{F}_{k_4}[e^{t\varepsilon\partial_x^2}\tilde{z}] \Bigr|^2 \biggr)^{\!\!\frac{1}{2}} \\
&\quad \times \biggl( \sum_{{k_1,k_2,k_3,k_4}}
    \frac{ \bigl| \widehat{\tilde{h}}_{k_1+k_2+k_3+k_4} \bigr|^2 }
    { |k_3 + k_4|^2 \, |k_1 + k_2 + k_3|^2 \, |k_1 + k_2 + k_4|^2 } \biggr)^{\!\!\frac{1}{2}} \\
&\leq \bigl\| e^{t\varepsilon\partial_x^2} u \bigr\|_{H^s}
    \bigl\| e^{t\varepsilon\partial_x^2} v \bigr\|_{H^s}
    \bigl\| e^{t\varepsilon\partial_x^2} w \bigr\|_{H^s}
    \bigl\| e^{t\varepsilon\partial_x^2} z \bigr\|_{H^s}
    \| h \|_{H^{-s}}.
\end{align*}
Then we have
\begin{align*}
&\|e^{t\varepsilon\partial_x^2}A^i_4(u,v,w,z)\|_{H^s}\lesssim \|e^{t\varepsilon\partial_x^2}u\|_{L_t^{\infty}H_x^s}\|e^{t\varepsilon\partial_x^2}v\|_{L_t^{\infty}H_x^s}
\|e^{t\varepsilon\partial_x^2}w\|_{L_t^{\infty}H_x^s}\|e^{t\varepsilon\partial_x^2}z\|_{L_t^{\infty}H_x^s}, \> i={1,2}.
\end{align*}
This completes the proof of Lemma \ref{0-9}.
\end{proof}

Then for $|k|>N$, by the expression of \eqref{1-16} and \eqref{1-13}, we have
\begin{align}
\sup_{t \in [0,T]} \bigl\| e^{t\varepsilon\partial_x^2} \Phi(v)(t) \bigr\|_{H^s}
&\lesssim \| v(0) \|_{H^s}
    + \bigl\| e^{t\varepsilon\partial_x^2} A(v)(t) \bigr\|_{H^s}
    + \int_0^t \bigl\| e^{t\varepsilon\partial_x^2} B(v)(\tau) \bigr\|_{H^s}  \mathrm{d}\tau \nonumber \\
&\lesssim \| v(0) \|_{H^s}
    + N^{-1} \Bigl( \bigl\| e^{t\varepsilon\partial_x^2} v \bigr\|^2_{L_t^{\infty} H_x^s}
    + \bigl\| e^{t\varepsilon\partial_x^2} v \bigr\|^3_{L_t^{\infty} H_x^s} \Bigr) \nonumber \\
&\quad + t \Bigl( \bigl\| e^{t\varepsilon\partial_x^2} v \bigr\|^3_{L_t^{\infty} H_x^s}
    + \bigl\| e^{t\varepsilon\partial_x^2} v \bigr\|^4_{L_t^{\infty} H_x^s} \Bigr) \label{0.4}
\end{align}

For $|k|\leq N$, $|k|^s\lesssim |k_1|^s$ with $s\geq0$, we have
\begin{align*}
&\Big\|\sum_{k_1+k_2=k}ike^{-t\varepsilon k^2}e^{-3ikk_1k_2t+t\varepsilon(k^2-k_1^2-k_2^2)}v_{k_1}v_{k_2}\Big\|_{H^s}\\
&\lesssim \Big\|\sum_{k_1+k_2=k}|k|^{1+s}e^{-3ikk_1k_2t}\mathcal{F}_{k_1}[e^{t\varepsilon\partial_x^2}v]\mathcal{F}_{k_2}[e^{t\varepsilon\partial_x^2}v]
\Big\|_{L^2}\\
&\lesssim
\Big\|\sum_{k_1+k_2=k}|k||k_1|^s\mathcal{F}_{k_1}[e^{t\varepsilon\partial_x^2}v]\mathcal{F}_{k_2}[e^{t\varepsilon\partial_x^2}v]
\Big\|_{L^2}\\
&\lesssim
\|e^{t\varepsilon\partial_x^2}v\|_{H^s}\|e^{t\varepsilon\partial_x^2}v\|_{H^s}\|k\|_{L^2}\lesssim N^{{3}/{2}}\|e^{t\varepsilon\partial_x^2}v\|^2_{H^s}.
\end{align*}
Then for $|k|\leq N$, by \eqref{1-13}, we have
\begin{align}
\|e^{t\varepsilon\partial_x^2}\Phi(v)(t)\|_{H^s}\lesssim
&\|v(0)\|_{H^s}+tN^{{3}/{2}}\bigl\| e^{t\varepsilon\partial_x^2} v \bigr\|^2_{L_t^{\infty} H_x^s}.\label{0.5}
\end{align}
Combining \eqref{0.4} and \eqref{0.5}, we have the $H^s$ estimates for $e^{t\varepsilon\partial_x^2}\Phi(v)(t)$
\begin{align}\label{99-12}
\|e^{t\varepsilon\partial_x^2}\Phi(v)(t)\|_{L_t^{\infty}H_x^s}\lesssim \|v(0)\|_{H^s}+&(tN^{{3}/{2}}+N^{-1})
\Big(\bigl\|e^{t\varepsilon\partial_x^2}v\bigr\|^2_{L_t^{\infty}H_x^s}+\bigl\| e^{t\varepsilon\partial_x^2} v \bigr\|^4_{L_t^{\infty} H_x^s}\Big).
\end{align}

Define $X=
\{
u\in C([0,T];H^s):\|u\|_{L_{[0,T]}^{\infty}H^s}\leq 2Cr\},$ with $r=\|u(0)\|_{H^s}$, then we prove that $\Phi$ is a contraction on $X$.
From the expression of \eqref{99-12}, if $u\in X$, then
\begin{align*}
\|\Phi (u)\|_{L_{[0,T]}^{\infty}H^s}
&\lesssim \|u(0)\|_{H^s}+\big(N^{-1}+TN^{{3}/{2}}\big)\Big(\bigl\|e^{t\varepsilon\partial_x^2}v\bigr\|^2_{L_t^{\infty}H_x^s}+\bigl\| e^{t\varepsilon\partial_x^2} v \bigr\|^4_{L_t^{\infty} H_x^s}\Big).
\end{align*}
Choosing $N=T^{-{2}/{5}}$ such that $TN^{{3}/{2}}=N^{-1}$ in the inequality above, we have that
\begin{align*}
\|\Phi (u)\|_{L_{[0,T]}^{\infty}H^s}
&\leq C\|u(0)\|_{H^s}+CT^{{2}/{5}}(\|u\|^2_{L_{t}^{\infty}H_x^s}+\|u\|^4_{L_t^{\infty}H_x^s})\\
&\leq Cr+CT^{{2}/{5}}\left((2Cr)^2+(2Cr)^4\right)\leq  2Cr,
\end{align*}
if we fix $T$, provided $r$ satisfies $T^{{2}/{5}}(2C)^2r\leq 1/4$ and $T^{{2}/{5}}(2C)^4r^3\leq 1/4$.
And by the following estimate in Lemma \ref{eq1}, then given $u,v \in X$, we have
\begin{align*}
\|\Phi(u) - \Phi(v)\|_{L^\infty_T H^s} &\leq C T^{{2}/{5}}\Big( \|u\|_{L^\infty_T H^s} + \|v\|_{L^\infty_T H^s} + ( \|u\|_{L^\infty_T H^s} + \|v\|_{L^\infty_T H^s} )^3\Big) \|u - v\|_{L^\infty_T H^s} \\
&\leq C T^{{2}/{5}}\Big((2C)^2r+ (2C)^4r^3\Big) \|u - v\|_{L^\infty_T H^s} \\
&\leq \frac{1}{2} \|u - v\|_{L^\infty_T H^s}.
\end{align*}
Thus $\Phi$ is contraction mapping on $X$. This gives us a
unique solution in $X$ and continuous dependence on initial data for the equation $\Phi u = u$.
 This means that the existence time $T$ of the solution $u(t)$ is independent of $\varepsilon$.

Then we deduce that
\begin{align*}
\|u\|_{L_t^{\infty}H_x^s}\lesssim \|u(0)\|_{H^s}\lesssim \|\phi\|_{H^s},
\end{align*}
where the implicit constant is independent of $\varepsilon$.

\section{Uniform continuity and limit behavior}

In this section we complete the proof of Theorem \ref{00-1} and Theorem \ref{00-2}. We need to show the solution mapping $S^{\varepsilon}$ is continuous from $H^s\rightarrow C([0,T ];H^s)$ uniformly on $\varepsilon\in (0,1]$ and the inviscid limit in $H^s$.

\subsection{Uniform continuity}\mbox{}

We now prove the uniform continuity of the solution mapping \( S^\varepsilon \) at \( \phi \in H^s \) with respect to \( \varepsilon \). Specifically, for any \( \eta > 0 \), there exists \( \delta > 0 \) such that if \( \phi_1, \phi_2 \in H^s \) satisfy \( \|\phi_1 - \phi_2\|_{H^s} < \delta \), then for all \( \varepsilon \in (0, 1] \),
\[
\|S^\varepsilon(\phi_1) - S^\varepsilon(\phi_2)\|_{C([0,T];H^s)} < \eta.
\]

Define \( u_i = e^{-t(\partial_x^3 - \varepsilon \partial_x^2)} v_i \) for \( i = 1, 2 \). The following lemma provides key estimates for nonlinear operator differences, leveraging the relation \(\|e^{t\varepsilon \partial_x^2} v_i\|_{H^s} = \|u_i\|_{H^s}\) and Lemma \ref{0-9}.
This notation satisfies that $A_2(v,v)=A_2(v)$, $A_3(v,v,v)=A_3(v)$, $B^3_0(v,v,v)=B^3_0(v)$, $A_4^i(v,v,v,v)=A_4^i(v).$
\begin{lem}\label{eq1}
For $s\geq 0$, the following bounds on nonlinear operator differences hold:
\begin{align*}
&\|\mathbb{P}_{>N}e^{t\varepsilon\partial_x^2}(A_2(v_1)-A_2(v_2))\|_{H^s}\lesssim N^{-1}(\|u_1\|_{H^s}+\|u_2\|_{H^s})\|u_1-u_2\|_{H^s},\\
&\|\mathbb{P}_{>N}e^{t\varepsilon\partial_x^2}(A_3(v_1)-A_3(v_2))\|_{H^s}\lesssim N^{-1}(\|u_1\|_{H^s}+\|u_2\|_{H^s})^2\|u_1-u_2\|_{H^s},\\
&\|\mathbb{P}_{>N}e^{t\varepsilon\partial_x^2}(B_3^0(v_1)-B_3^0(v_2))\|_{H^s}\lesssim (\|u_1\|_{H^s}+\|u_2\|_{H^s})^2\|u_1-u_2\|_{H^s},\\
&\|\mathbb{P}_{>N}e^{t\varepsilon\partial_x^2}(A_4^i(v_1)-A_4^i(v_2))\|_{H^s}\lesssim (\|u_1\|_{H^s}+\|u_2\|_{H^s})^3\|u_1-u_2\|_{H^s}, i=1,2.
\end{align*}
\end{lem}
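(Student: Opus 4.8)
The plan is to reduce all four estimates to the multilinear bounds already proved in Lemma~\ref{0-9}, exploiting that $\widehat{A}_2$, $\widehat{A}_3$, $\widehat{B}^0_3$ and $\widehat{A}^i_4$ are bilinear, trilinear, trilinear and quadrilinear Fourier multiplier operators whose symbols and time-dependent phases depend only on the frequencies, hence are genuinely multilinear in their inputs. First I would pass from the twisted variables back to $u_1,u_2$: since $u_i=e^{-t(\partial_x^3-\varepsilon\partial_x^2)}v_i$ we have $e^{t\varepsilon\partial_x^2}v_i=e^{t\partial_x^3}u_i$, and as $e^{t\partial_x^3}$ is an $H^s$-isometry this gives
\[
\|e^{t\varepsilon\partial_x^2}v_i\|_{H^s}=\|u_i\|_{H^s};
\]
applying the same identity to $v_1-v_2=e^{t(\partial_x^3-\varepsilon\partial_x^2)}(u_1-u_2)$ yields $\|e^{t\varepsilon\partial_x^2}(v_1-v_2)\|_{H^s}=\|u_1-u_2\|_{H^s}$. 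Thus every $v$-side norm produced below matches a $u$-side norm in the statement.

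Next I would telescope each difference. Writing $w:=v_1-v_2$ and using that $\widehat{A}_2$ is symmetric in its two arguments,
\[
A_2(v_1)-A_2(v_2)=A_2(w,v_1)+A_2(v_2,w);
\]
for a trilinear $T\in\{A_3,B^0_3\}$, with arguments in the slot order of its definition,
\[
T(v_1)-T(v_2)=T(w,v_1,v_1)+T(v_2,w,v_1)+T(v_2,v_2,w);
\]
and for $A^i_4$,
\[
A^i_4(v_1)-A^i_4(v_2)=A^i_4(w,v_1,v_1,v_1)+A^i_4(v_2,w,v_1,v_1)+A^i_4(v_2,v_2,w,v_1)+A^i_4(v_2,v_2,v_2,w).
\]
In each case every summand has exactly one argument equal to $w$ and the remaining ones equal to $v_1$ or $v_2$.

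Finally I would apply Lemma~\ref{0-9} termwise: each summand is bounded by the product of the $H^s$-norms of $e^{t\varepsilon\partial_x^2}$ applied to its arguments, carrying the gain $N^{-1}$ for the $A_2$ and $A_3$ terms and no gain for the $B^0_3$ and $A^i_4$ terms, exactly as there. Replacing each factor $\|e^{t\varepsilon\partial_x^2}v_i\|_{H^s}$ by $\|u_i\|_{H^s}\leq\|u_1\|_{H^s}+\|u_2\|_{H^s}$ and the single factor $\|e^{t\varepsilon\partial_x^2}w\|_{H^s}$ by $\|u_1-u_2\|_{H^s}$, and summing the finitely many terms, produces precisely the four claimed inequalities.

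There is no real obstacle here; the only points deserving a little care are (i) that the estimates in Lemma~\ref{0-9} are stated for arbitrary, not necessarily equal, arguments, which is exactly what legitimizes the termwise application after telescoping, and (ii) the routine bookkeeping for the non-symmetric placement of arguments in $\widehat{A}_3$ and $\widehat{A}^i_4$ and for the internal splitting of $A^i_4$ into two pieces, all of which is already absorbed into the statement of Lemma~\ref{0-9}.
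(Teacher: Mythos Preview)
Your proposal is correct and follows essentially the same approach as the paper: telescope each multilinear difference so that one slot carries $v_1-v_2$, then apply Lemma~\ref{0-9} termwise and use the isometry $\|e^{t\varepsilon\partial_x^2}v_i\|_{H^s}=\|u_i\|_{H^s}$ to pass back to the $u$-side norms. If anything, your telescoping for $A_4^i$ is written more carefully than the paper's (which places $v_1-v_2$ in the first slot of every term), but the resulting bounds are identical.
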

\begin{proof}
Using the bilinearity of \( A_2 \),  we have
\begin{align*}
\|A_2(v_1)-A_2(v_2)\|_{H^s}
&\lesssim\|A_2(v_1-v_2,v_1)\|_{H^s}+\|A_3(v_1-v_2,v_2)\|_{H^s}.
\end{align*}
By Lemma \ref{0-9} and the norm equivalence \(\|e^{t\varepsilon \partial_x^2} v_i\|_{H^s} = \|u_i\|_{H^s}\),
\begin{align*}
\|\mathbb{P}_{>N}e^{t\varepsilon\partial_x^2}(A_2(v_1)-A_2(v_2))\|_{H^s}
&\lesssim N^{-1}(\|e^{t\varepsilon\partial_x^2}v_1\|_{H^s}+\|e^{t\varepsilon\partial_x^2}v_2\|_{H^s})\|e^{t\varepsilon\partial_x^2}(v_1-v_2)\|_{H^s}\\
&= N^{-1}(\|u_1\|_{H^s}+\|u_2\|_{H^s})\|u_1-u_2\|_{H^s}.
\end{align*}
By the trilinearity of \( A_3 \), we have
\[
A_3(v_1) - A_3(v_2) = A_3(v_1 - v_2, v_1, v_1) + A_3(v_2, v_1 - v_2, v_1) + A_3(v_2, v_2, v_1 - v_2).
\]
Applying Lemma \ref{0-9} to each term, we obtain
\begin{align*}
\|\mathbb{P}_{>N}e^{t\varepsilon\partial_x^2}(A_3(v_1)-A_3(v_2))\|_{H^s}
&\lesssim N^{-1}\left(\|u_1\|^2_{H^s}+\|u_2\|^2_{H^s}+\|u_1\|_{H^s}\|u_2\|_{H^s}\right)\|u_1-u_2\|_{H^s}\\
&\lesssim N^{-1}\left(\|u_1\|_{H^s}+\|u_2\|_{H^s}\right)^2\|u_1-u_2\|_{H^s}.
\end{align*}
Similarly, we bound the difference $B_3^0(u_1)-B_3^0(u_2)$ by
\begin{align*}
\|\mathbb{P}_{>N}e^{t\varepsilon\partial_x^2}(B_3^0(v_1)-B_3^0(v_2))\|_{H^s}
\lesssim \left(\|u_1\|_{H^s}+\|u_2\|_{H^s}\right)^2\|u_1-u_2\|_{H^s}.
\end{align*}
And for \( A_4^i \) (\( i = 1, 2 \)), analogous decomposition into four terms gives:
\begin{align*}
&\|\mathbb{P}_{>N}e^{t\varepsilon\partial_x^2}(A_4^i(v_1)-A_4^i(v_2))\|_{H^s}\\
&\lesssim\|e^{t\varepsilon\partial_x^2}A_4^i(v_1-v_2,v_1,v_1,v_1)\|_{H_{|k|>N}^s}+\|e^{t\varepsilon\partial_x^2}A_4^i(v_1-v_2,v_2,v_2,v_2)\|_{H_{|k|>N}^s}\\
&\quad+\|e^{t\varepsilon\partial_x^2}A_4^i(v_1-v_2, v_1, v_1, v_2)\|_{H_{|k|>N}^s}+\|e^{t\varepsilon\partial_x^2}A_4^i(v_1-v_2,v_1,v_2, v_2)\|_{H_{|k|>N}^s}\\
&\lesssim \left(\|u_1\|^3_{H^s}+\|u_2\|^3_{H^s}+\|u_1\|^2_{H^s}\|u_2\|_{H^s}+\|u_1\|_{H^s}\|u_2\|^2_{H^s}\right)\|u_1-u_2\|_{H^s}\\
&\lesssim \left(\|u_1\|_{H^s}+\|u_2\|_{H^s}\right)^3\|u_1-u_2\|_{H^s}.
\end{align*}
This completes the proof of Lemma \ref{eq1}.
\end{proof}
Similarly, according to Lemma \ref{eq1}, for \eqref{1-13}, we obtain the following estimates for the high-frequency components ($|k| > N$):
\begin{align}\label{99-13}
\|\mathbb{P}_{\leq N}e^{t\varepsilon\partial_x^2}(v_1-v_2)\|_{H^s}\lesssim \|\phi_1 - \phi_2\|_{H^s}+tN^{3/2}(\|u_1\|_{H^s}+\|u_2\|_{H^s})\|u_1-u_2\|_{H^s}.
\end{align}

Substituting these estimates into the evolution equation for \( u_1 - u_2 \) (cf. \eqref{1-13} and \eqref{99-13}), and recalling that \(\| u_1 - u_2\|_{H^s} = \|e^{t\varepsilon\partial_x^2}(v_1 - v_2)\|_{H^s}\), we obtain
\begin{align*}
\|u_1 - u_2\|_{H^s} \lesssim &\|\phi_1 - \phi_2\|_{H^s} \\
&+ \big( N^{-1} + TN^{3/2} \big) \left( \|u_1\|_{H^s} + \|u_2\|_{H^s} + (\|u_1\|_{H^s} + \|u_2\|_{H^s})^3 \right) \|u_1 - u_2\|_{H^s}.
\end{align*}
Choose \( N \) large enough such that $N^{-1}=TN^{3/2}$. For \( t \in [0,T] \), this implies that
\begin{align}\label{11-2}
\|u_1 - u_2\|_{H^s} \leq C(T, \|\phi_1\|_{H^s}, \|\phi_2\|_{H^s}) \|\phi_1 - \phi_2\|_{H^s},
\end{align}
with the constant independent of \( \varepsilon \).

\subsection{Uniform global well-posedness}\mbox{}

We will extend the uniform local solution obtained in the last section to a global solution. The standard way to use conservation law. Let $u$ be a smooth solution of the KdV-Burgers equation \eqref{00} with initial data $\phi$, multiply $u$ and integrate, then we have
\begin{align*}
\frac{1}{2}\|u(t)\|^2_{L^2}+\varepsilon\int_0^t\big\|\partial_x u(\tau)\big\|^2_{L^2}d\tau=\frac{1}{2}\|\phi\|^2_{L^2}.\label{2-2}
\end{align*}
Thus if $\phi\in L^2(\mathbb{T})$, we obtain
\begin{align*}
\sup_{t\in\mathbb{R}}\|u(t)\|_{L^2}\lesssim \|\phi\|_{L^2}.
\end{align*}
The above inequality holds for $L^2$-strong solution, then we get that \eqref{00} is uniformly globally well-posed.

\subsection{Limit behavior}\mbox{}

Finally, we prove the inviscid limit behavior in $H^s$ for $s\geq 0$. It is well known that KdV equation \eqref{1.4} is completely integrable and has infinite conservation laws, and as a corollary one obtains that let $v$ be a smooth solution to \eqref{1.4} with initial data $v_0$, $s\geq 0$, we have
\begin{align*}
\sup_{t\in\mathbb{R}}\|v(t)\|_{H^s}\lesssim \|v_0\|_{H^{s}}.
\end{align*}
There are less symmetries for \eqref{00}. We can still expect that the $H^k$
norm of the solution remains bounded for a finite time $T > 0$, since the dissipative term behaves well for $t > 0$. Now we prove for $k = 1$ which will suffice for our purpose.
Assume $u$ is a smooth solution to \eqref{00}. Let $H[u] = \int_{\mathbb{T}} (u_x)^2 + \frac{2}{3}u^3 +u^2 dx$, then by Eq. \eqref{00} and
partial integration
\begin{align*}
\frac{d}{dt} H[u] &= \int_{\mathbb{T}} \left( 2u_x \partial_x (u_t) + u^2 u_t + 2u u_t \right) dx \\
&= \int_{\mathbb{T}} 2u_x \partial_x\left( -u_{xxx} + \varepsilon \partial_x^{2} u  + 2uu_x \right) dx + \int_{\mathbb{T}} u^2 \left( -u_{xxx} + \varepsilon  \partial_x^{2} u + 2uu_x \right) dx \\
&\quad+ \int_{\mathbb{T}} 2u\left(  -u_{xxx} + \varepsilon \partial_x^{2} u  +2uu_x \right) dx \\
&= \int_{\mathbb{T}} \left( -2\varepsilon (\partial_x^2 u)^2 + \varepsilon u^2 \partial_x^2 u - 2\varepsilon (\partial_x u)^2 \right) dx \\
&\leq -\varepsilon \int_{\mathbb{T}} (\partial_x^2 u)^2dx + \varepsilon\int_{\mathbb{T}} u^2  \partial_x^2 u   dx,
\end{align*}

Thus we have
\[
\frac{d}{dt} H[u] + \frac{\varepsilon}{2} \| \partial_x^2 u \|_{L^2}^2 \lesssim \| u \|_{L^4}^4.
\]
Using Gagliardo-Nirenberg inequality
\[
\| u \|_{L^3}^3 \lesssim \| u \|_{L^2}^{5/2} \| u_x \|_{L^2}^{1/2}, \quad \| u \|_{L^4}^4 \lesssim \| u \|_{L^2}^3 \| u_x \|_{L^2},
\]
and Cauchy-Schwarz inequality, we get
\begin{align}\label{eq3}
\sup_{t \in [0,T]} \| u(t) \|_{H^1} + \varepsilon^{1/2} \left( \int_0^T \| \partial_x^2 u(\tau) \|_{L^2}^2  d\tau \right)^{1/2} \leq C(T, \|\phi\|_{H^1}), \quad \forall T > 0.
\end{align}

Let $S^{\varepsilon}_{}(\varphi)$ and $S_{}(\varphi)$ denote the nonlinear solution mappings of the Cauchy problems \eqref{00} and \eqref{1.4} that associate with any initial data $\varphi$. For convenience, we only give the proof of the case $s = 0$, since the proof of the case $s > 0$ is similar.
It suffices to prove
$$\lim_{\varepsilon\rightarrow 0} \|S^{\varepsilon}(\varphi)-S(\varphi)\|_{C([0,T];{L^2})}=0.$$
Assume $u$ is a solution to \eqref{00} obtained in the last section and $v$ is a solution to \eqref{1.4} with initial data $\varphi_1, \varphi_2 \in L^2$, $u=S^{\varepsilon}(\varphi_1)$, $v=S(\varphi_2)$. Let $w=u-v$, $w_0=\varphi_1-\varphi_2$, then $w$ satisfies
\begin{equation}\label{8-3}
\left\{
\begin{aligned}
&w_t + \partial_x^3w - \varepsilon \partial_x^{2}u-\partial_x\left(w\big(u + v\big)\right) = 0, \quad t \in [0,T], \, x \in \mathbb{T},\\
&w(0) = w_0.
\end{aligned}
\right.
\end{equation}
Then the term $-\varepsilon\partial_x^2 u$ can be viewed as the perturbation to the difference equation of the KdV equation.
Indeed, we perform an inner product with $w$ in \eqref{8-3}, then we have
\begin{align*}
\langle w_t,w\rangle+\langle\partial_x^3w,w\rangle-\langle\varepsilon\partial_x^2u,w\rangle
=\langle\partial_x((u + v)w),w\rangle.
\end{align*}
We obtain
\begin{align}
\label{99-14}
\frac{1}{2}\frac{d}{dt}\|w(t)\|^2_{L^2}
&=\varepsilon\int_{\mathbb{T}}\partial_x^2u\cdot wdx+\frac{1}{2}\int_{\mathbb{T}}\partial_x(u+v)\cdot w^2dx.
\end{align}
The contribution of the first term of the above right-hand side can be estimated by the Cauchy-Schwarz inequality and Young's inequality, then we get
$$\varepsilon\int_{\mathbb{T}}\partial_x^2u\cdot wdx\lesssim\varepsilon\|\partial_x^2u(t)\|_{L^2}\|w(t)\|_{L^2}\leq \frac{\varepsilon^2}{2}\|\partial_x^2u(t)\|^2_{L^2}+\frac{1}{2}\|w(t)\|^2_{L^2}.$$
For $u, v \in L^2(\mathbb{T})$, the second term is estimated via integration by parts, Fourier techniques, and variable transformations.
The details are provided in the Appendix. Then we get
\begin{align*}
\int_0^t\int_{\mathbb{T}}\partial_x(u+v)w^2dxd\tau
\lesssim \varepsilon^2\int_0^T \|\partial_xu\|^2_{L^2}d\tau+C(T, \|v\|_{L^2},\|u\|_{L^2})\|w\|^2_{L_t^{\infty}L_x^2}.
\end{align*}
Integrating the differential inequality \eqref{99-14} on $[0, T]$, we obtain
\begin{align*}
\sup_{t \in [0,T]} \|w(t)\|_{L^2}^2
&\lesssim \|w_0\|_{L^2}^2 + \varepsilon^2 \int_0^{T} \|u(\tau)\|_{H^2}^2  d\tau + C(T, \|v\|_{L^2},\|u\|_{L^2}) \|w\|_{L_t^{\infty}L_x^2}^2.
\end{align*}
From Theorem \ref{00-1}, we have $C(T, \|v\|_{L^2},\|u\|_{L^2}) < 1$ when $T\|u\|_{L^2} < 1/4$ and $T\|v\|_{L^2} < 1/4$, for $T<1$. Combining this with the regularity estimate \eqref{eq3} yields
\begin{align*}
\|w\|_{L_t^{\infty}L_x^2}
&\lesssim \|w_0\|_{L^2} + \varepsilon \|u\|_{L^2_t H^2_x} \\
&\lesssim \|w_0\|_{L^2} + \varepsilon^{1/2} C(T, \|\varphi_1\|_{H^1}, \|\varphi_2\|_{L^2}).
\end{align*}

For general $\varphi_1,\varphi_2\in L^2$, using the scaling, then we immediately get that for any $T>0$ such that
the difference estimate between solutions $u=S^{\varepsilon}(\varphi_1)$, $v=S(\varphi_2)$ satisfies
 \begin{align}\label{5-1}
 \|u-v\|_{C([0,T];L^2)}\lesssim \|\varphi_1-\varphi_2\|_{L^2}+\varepsilon^{1/2} C(T,\|\varphi_1\|_{H^1},\|\varphi_2\|_{L^2} ).
 \end{align}

Indeed, for any \( \eta > 0 \), there is a \( K > 0 \) such that
$
\|\varphi- \mathbb{P}_{\leqslant K} \varphi  \|_{L^2} \leq \eta/4.
$
Then it follows from the uniform continuity \eqref{11-2}
\begin{align*}
&\|S^{\varepsilon}(\varphi) - S^{\varepsilon}(\mathbb{P}_{\leq K}\varphi)\|_{C([0,T];L^2)}\leq \eta/4,\\
&\|S(\varphi) - S(\mathbb{P}_{\leq K}\varphi)\|_{C([0,T];L^2)}\leq \eta/4.
\end{align*}
Fixing this $K$ large enough, by taking $\varepsilon>0$ sufficiently small, we get from \eqref{5-1} that
\begin{align*}
\|S^{\varepsilon}(\mathbb{P}_{\leq K}\varphi) - S(\mathbb{P}_{\leq K}\varphi)\|_{C([0,T];L^2)}\lesssim \varepsilon^{1/2} C(T,K,\|\varphi\|_{L^2} )\leq \eta/4.
\end{align*}
Combining the above estimates, thus we obtain
\begin{align*}
\|S^{\varepsilon}(\varphi) - S(\varphi)\|_{C([0,T];L^2)}
&\leq \|S^{\varepsilon}(\varphi) - S^{\varepsilon}(\mathbb{P}_{\leq K}\varphi)\|_{C([0,T];L^2)} \\
&\quad + \|S^{\varepsilon}(\mathbb{P}_{\leq K}\varphi) - S(\mathbb{P}_{\leq K}\varphi)\|_{C([0,T];L^2)}\\
&\quad + \|S(\mathbb{P}_{\leq K}\varphi) - S(\varphi)\|_{C([0,T];L^2)}\leq \eta.
\end{align*}
The proof of Theorem \ref{00-2} is completed.

\section{Uniform local well-posedness for the mKdV-B equation}

Without loss of generality, we can assume that the integral of the initial value $u_0$ is zero,
i.e. $\mathbb{P}_0u_0=0$
and $\mathbb{P}u_0 = u_0$,
where $\mathbb{P}_0$ and $\mathbb{P}$ are the zero-mode and nonzero-mode projection
operators defined in \eqref{22-1}.

 Since $\mathbb{P}_0u_0=0$, the conservation law
$\int_{\mathbb{T}}u(t,x)dx=\int_{\mathbb{T}}u_0(x) dx$
of mKdV-Burgers equation implies that
$\mathbb{P}_0u(t,\cdot)=0$ for all $t\geq 0$.
By introducing the twisted variable $v(t,x)$
$$v(t,x): = e^{t(\partial_x^3-\varepsilon\partial_x^2)}u(t,x), \quad t\geq 0, x\in \mathbb{T},$$
the mKdV-Burgers equation in \eqref{8-1} can be written as
\begin{equation*}
\partial_t v(t,x)=e^{t(\partial_x^3-\varepsilon\partial_x^2)}\partial_x\left(e^{-t(\partial_x^3-\varepsilon\partial_x^2)}v(t,x)\right)^3,\quad t\geq 0, x\in\mathbb{T}.
\end{equation*}

By applying the Fourier transform and considering whether $(k_1+k_2)(k_1+k_3)(k_2+k_3)$ is zero, we can decompose the above equality into two parts:
\begin{align}
\partial_t\hat{v}_k&=\sum_{k_1+k_2+k_3=k}ike^{-it Q_2(k,k_1,k_2,k_3)}\hat{v}_{k_1}\hat{v}_{k_2}\hat{v}_{k_3}\label{6-1}\\
&=\sum_{(k_1,k_2,k_3)\in\Gamma(k)}
ike^{-it Q_2(k,k_1,k_2,k_3)}\hat{v}_{k_1}\hat{v}_{k_2}\hat{v}_{k_3}-
ike^{t\varepsilon (k^2-k_1^2-k_2^2-k_3^2)}|\hat{v}_{k}|^2\hat{v}_{k}\nonumber\\
&=:\widehat{\mathcal{N}}(v)_k+\widehat{\mathcal{R}}(v)_k,\nonumber
\end{align}
where $Q_2(k,k_1,k_2,k_3)=3(k_1+k_2)(k_1+k_3)(k_2+k_3)+i\varepsilon(k^2-k_1^2-k_2^2-k_3^2)$.

By integrating both sides from 0 to $t$,
\begin{align*}
\hat{v}_k(t)-\hat{v}_k(0)&=\int_0^t\widehat{\mathcal{R}}(v)_k(\tau)d\tau+\int_0^t\widehat{\mathcal{N}}(v)_k(\tau)d\tau.
\end{align*}
For $\widehat{\mathcal{N}}_k$, we can perform differentiation by parts since
$e^{-itQ_2(k,k_1,k_2,k_3)}=\partial_t\left(\frac{ie^{-itQ_2(k,k_1,k_2,k_3)}}{Q_2(k,k_1,k_2,k_3)}\right)$. Following an argument similar to that in Section 3,  then we obtain
\begin{equation}
\hat{v}_k(t)=\hat{v}_k(0)-\widehat{\mathcal{N}_1}(v)_k(t)+\widehat{\mathcal{N}_1}(v)_k(0)+\int_0^t\widehat{\mathcal{R}}(v)_k(\tau)d\tau
+\int_0^t\widehat{\mathcal{N}_2}(v)_k(\tau)d\tau,\label{0000}
\end{equation}
where
\begin{align}
&\widehat{\mathcal{N}_1}(v)_k=\frac{ke^{-itQ_2(k,k_1,k_2,k_3)}}{Q_2(k,k_1,k_2,k_3)}\hat{v}_{k_1}\hat{v}_{k_2}\hat{v}_{k_3},\label{9.1}\\
&\widehat{\mathcal{N}_2}(v)_k=\sum_{\substack{k=k_1+k_2+k_3\\ k_1=j_1+j_2+j_3}}\frac{ikk_1}{Q_2(k,k_1,k_2,k_3)} e^{-isQ_3(k,k_1,k_2,k_3,j_1,j_2,j_3)}\hat{v}_{j_1}\hat{v}_{j_2}\hat{v}_{j_3}\hat{v}_{k_2}\hat{v}_{k_3},\label{9.2}
\end{align}
where $Q_3(k,k_1,k_2,k_3,j_1,j_2,j_3)=Q_2(k,k_1,k_2,k_3)+Q_2(k_1,j_1,j_2,j_3).$

Combining \eqref{6-1} and \eqref{0000}, we will consider high and low frequencies separately to close the contraction.
Fix $N$ large to be determined later. Then we obtain
\begin{align*}
\hat{v}_k(t)=
\left\{
\begin{array}{ll}
\hat{v}_k(0)-\widehat{\mathcal{N}_1}(v)_k(t)+\displaystyle \widehat{\mathcal{N}_1}(v)_k(0)+\displaystyle\int_0^t\Big(\widehat{\mathcal{R}}(v)_{k}+\widehat{\mathcal{N}_2}(v)_k\Big)(\tau)d\tau & \text{if } |k|>N,\\
\>
\hat{v}_k(0)+\displaystyle\int_0^t\displaystyle\sum_{k_1+k_2+k_3=k}ike^{-i\tau Q_2(k,k_1,k_2,k_3)}\hat{v}_{k_1}\hat{v}_{k_2}\hat{v}_{k_3}d\tau
& \text{if } |k|\leq N.
\end{array}\right.
\end{align*}

\begin{lem}\label{8-4}
 Let $s\geq {1}/{2}$,  \( \tilde{\phi} \) be defined in \eqref{0-12}, and \( k = k_1 + k_2 + k_3 \), the multilinear  function $C_4(v)=C_4(v_1,v_2,v_3,v_4,v_5)$ satisfies the following inequality:
\begin{align*}
\left|\mathcal{F}_k[C_4(v)]\right|\lesssim \sum_{\substack{ k_1+k_2+k_3=k\\j_1+j_2+j_3=k_1, \>\Gamma(k)}} |k||k_1|  |\tilde{\phi}|^{-1}|\hat{v}_{j_1}\hat{v}_{j_2}\hat{v}_{j_3}\hat{v}_{k_2}\hat{v}_{k_3}|.
\end{align*}
Then the following estimate holds:
\[
\| C_4(v) \|_{H^s} \lesssim \prod_{i=1}^5\|v_i\|_{L^{\infty}_tH^s_x}.
\]
\end{lem}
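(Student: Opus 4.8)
The plan is to run the same differentiation‑by‑parts/Cauchy--Schwarz machinery that controls the trilinear operator $C_3$ in Lemma~\ref{0-6}(3); the only genuinely new features are the extra factor $|k_1|$ in the symbol and the innermost convolution $j_1+j_2+j_3=k_1$. First I would collapse that inner sum: for each $i$ let $V_i$ be the function with Fourier coefficients $\widehat{(V_i)}_k=|\hat v_{i,k}|\ge 0$, so that $\|V_i\|_{H^\sigma}=\|v_i\|_{H^\sigma}$ and $\sum_{j_1+j_2+j_3=k_1}|\hat v_{1,j_1}\hat v_{2,j_2}\hat v_{3,j_3}|=\widehat{(V_1V_2V_3)}_{k_1}$. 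With $W:=V_1V_2V_3$, the hypothesis on $|\mathcal F_k[C_4(v)]|$ becomes a trilinear convolution in $(k_1,k_2,k_3)$ with inputs $W,V_4,V_5$ and symbol $|k|\,|k_1|\,|\tilde\phi|^{-1}$; here $\|W\|_{H^\rho}\lesssim\prod_{i=1}^3\|v_i\|_{H^s}$ for every $\rho\le s$ when $s>1/2$ (Kato--Ponce, Lemma~\ref{7-1}), and $\|W\|_{L^2}\lesssim\prod_{i=1}^3\|v_i\|_{H^s}$ when $s=1/2$ (since $H^{1/2}(\mathbb{T})\hookrightarrow L^6$). I would also keep in reserve the sharper pointwise bound $|k_1|\,\widehat W_{k_1}\lesssim\sum_{j_1+j_2+j_3=k_1}\max(|j_1|,|j_2|,|j_3|)\prod_i|\hat v_{i,j_i}|$, which permits moving the extra derivative onto the largest of $v_1,v_2,v_3$ in the bad regions.

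Next, with $k_m=\max(|k|,|k_1|,|k_2|,|k_3|)$, I split $\Gamma(k)=\Gamma_1(k)\cup\Gamma_2(k)$ as in Lemma~\ref{5}. On $\Gamma_2(k)$ one has $|\tilde\phi|\gtrsim|k_m|^2$; since $|k_1|\le k_m$ and $|k|\le k_m$, the symbol is $\lesssim|k|/k_m\le 1$, so the extra $|k_1|$ is fully absorbed, and one closes exactly as in the $C_{32}$ estimate of Lemma~\ref{0-6}(3) — rewriting the weights $|k|^{2s}\,|k|\,|k_1|\,k_m^{-2}$ in terms of $\langle k_1\rangle^{2(s-1/2)}\langle k_2\rangle^{2s}\langle k_3\rangle^{2s}$ (this uses $s\ge 1/2$) and peeling off a convergent double sum, where the further decomposition $\Gamma_2=\Gamma_{21}\cup\Gamma_{22}$ of Lemma~\ref{5}(3) supplies the summability (the surplus $|\tilde\phi|^{-1}\lesssim k_m^{-15/7}$ on $\Gamma_{22}$, the gap condition $|k_j+k_h|\ll k_m^{5/7}$ on $\Gamma_{21}$). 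On $\Gamma_1(k)$, Lemma~\ref{5}(2) gives $|k|\sim|k_1|\sim|k_2|\sim|k_3|$ and $|\tilde\phi|\gtrsim|k_m|\Lambda$ with $\Lambda$ as in~\eqref{99-7}, so the symbol is $\lesssim|k_m|\Lambda^{-1}$ — one power of $|k_m|$ more than $C_{31}$; bounding $\Lambda^{-1}\le\sum_{\{i,j\}}|(k_i+k_j)(k_i+k_\ell)|^{-1}$ and noting that each such rank‑two kernel defines a bounded trilinear form $L^2\times L^2\times L^2\to L^2$ (Cauchy--Schwarz together with $\sum_{r\ne 0}|r|^{-2}<\infty$), the $\Gamma_1$ contribution is $\lesssim\|W\|_{H^{1-s}}\|v_4\|_{H^s}\|v_5\|_{H^s}$, which for $s>1/2$ is $\lesssim\prod_{i=1}^5\|v_i\|_{H^s}$ (using $1-s<s$ and the $H^s$‑algebra property). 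Adding the two regional contributions and absorbing $\|W\|$ via Step~1 yields $\|C_4(v)\|_{H^s}\lesssim\prod_{i=1}^5\|v_i\|_{L^\infty_tH^s_x}$.

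The main obstacle is the extra $|k_1|$ produced by the second differentiation by parts: unlike for $C_3$, the denominator $|\tilde\phi|^{-1}$ does not by itself absorb a whole derivative, so a naive argument would force $W=V_1V_2V_3\in H^{s+1/2}$, i.e.\ $v_i\in H^{s+1/2}$; this is why one must exploit $|k_1|\le k_m$ with the strong gain $|\tilde\phi|\gtrsim k_m^2$ on $\Gamma_2$ and never concentrate the whole weight on one factor on $\Gamma_1$. The truly delicate point is the endpoint $s=1/2$ in the ``double near‑resonant'' region where two of $k_1+k_2,\,k_1+k_3,\,k_2+k_3$ are small, so that $|\tilde\phi|$ is only $\sim k_m$ and the symbol is $\sim k_m$: since $H^{1/2}(\mathbb{T})$ is not an algebra one cannot route the estimate through the product $W$ at all, and must instead retain the summation $j_1+j_2+j_3=k_1$, move the extra derivative onto a genuinely high‑frequency input via $|k_1|\lesssim\max_i|j_i|$, and distribute the remaining weight among all five factors using the embeddings $H^{1/2}(\mathbb{T})\hookrightarrow L^p$ ($p<\infty$) in place of $L^\infty$ and the refined frequency decomposition of Lemma~\ref{5}(3). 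I expect this resonant endpoint case, rather than the generic frequency interactions, to require the bulk of the technical work.
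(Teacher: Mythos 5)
Your overall architecture (split $\Gamma(k)=\Gamma_1(k)\cup\Gamma_2(k)$, absorb the extra $|k_1|$ via $|k_1|\le k_m$ against $|\tilde\phi|\gtrsim k_m^2$ on $\Gamma_2$, and use $|\tilde\phi|\gtrsim k_m\Lambda$ plus Cauchy--Schwarz on $\Gamma_1$) matches the paper, and your $s>1/2$ cases do close: on $\Gamma_2$ the symbol is $\lesssim 1$ and the algebra property of $H^s$ finishes, and on $\Gamma_1$ your rank-two kernel argument gives $\|W\|_{H^{1-s}}\|v_4\|_{H^s}\|v_5\|_{H^s}$ with $1-s<s$. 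The genuine gap is at the endpoint $s=1/2$, and you have located it in the wrong region. The region you single out as ``truly delicate'' --- $\Gamma_1$, where $|\tilde\phi|$ may be as small as $\sim k_m$ --- is in fact handled in the paper by a single iterated Cauchy--Schwarz that keeps the inner sum $j_1+j_2+j_3=k_1$, assumes $|k|\lesssim|j_1|$, and redistributes the total weight $|k|^{2s+2}$ over all five factors; this argument never invokes a product/algebra estimate and is valid uniformly for $s\ge 1/2$, so no special endpoint treatment is needed there. The case that actually forces extra work is $\Gamma_2$ at $s=1/2$: there the symbol is merely bounded, so one is left with a bare quintic product, and $\|v_1\cdots v_5\|_{H^{1/2}}\lesssim\prod\|v_i\|_{H^{1/2}}$ fails. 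Your fallback of collapsing to $W=V_1V_2V_3$ with only $\|W\|_{L^2}\lesssim\prod_{i\le3}\|v_i\|_{H^{1/2}}$ does not close either: placing $W$ at regularity $0$ forces the weight accounting $|k|^{1/2}\,|k|\,|k_1|\,k_m^{-2}\lesssim\langle k_2\rangle^{1/2}\langle k_3\rangle^{1/2}\cdot K$ with $\sum_{k_2,k_3}K^2<\infty$, and when $k_m=|k_1|$ the left side is only bounded by $|k_1|^{1/2}$, leaving a divergent kernel $\sum_{k_2,k_3}|k_1|\,\langle k_2\rangle^{-1}\langle k_3\rangle^{-1}$.

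What is missing is the mechanism the paper uses precisely here: the refined lower bound $|\tilde\phi|\gtrsim k_m^2\lambda$ with $\lambda=\min\{|k_1+k_2|,|k_2+k_3|,|k_1+k_3|\}$, the split $\Gamma_2=\Gamma_{21}\cup\Gamma_{22}$ giving the quantitative gains $\lambda\gtrsim k_m^{4/7}$ on $\Gamma_{21}$ and $|\tilde\phi|\gtrsim k_m^{15/7}$ on $\Gamma_{22}$, and then a duality argument that keeps the inner $j$-sum, puts the single highest-frequency factor in $\ell^2$ with weight $|j_1|^{1/2}$, and places the remaining four factors in the $\ell^1$-Fourier (Wiener-type) norm $\||k|^{-\delta}\hat v_k\|_{\ell^1_k}\lesssim\|v\|_{H^{1/2}}$ for small $\delta>0$. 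Your closing paragraph gestures at the right ingredients (retain the $j$-sum, move the derivative to $\max_i|j_i|$, trade $L^\infty$ for $L^p$), but attaches them to $\Gamma_1$ and never explains how the $\Gamma_2$ endpoint avoids the quintic product; as written, the proof of the lemma at $s=1/2$ does not go through.
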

\begin{proof}
Decompose the inequality into two parts since $\Gamma(k)=\Gamma_1(k)\cup\Gamma_2(k)$, then
\begin{align*}
\left|\mathcal{F}_k[C_4(v)]\right|\lesssim\left|\mathcal{F}_k[C_{41}(v)]\right|+\left|\mathcal{F}_k[C_{42}(v)]\right|.
\end{align*}
For $(k_1,k_2,k_3)\in \Gamma_1(k)$, we have $|k_1|\sim|k_2|\sim|k_3|\sim|k|$ and \( |\tilde{\phi}| \gtrsim |k_m|\Lambda \), where
$\Lambda$ defined in \eqref{99-7}. Assume $|k| \lesssim |j_1|$.
For $s\geq 1/2$, $|k|\lesssim|k|^{2s}$ holds. Applying Cauchy-Schwarz inequality
\begin{align*}
\|\mathcal{F}_k[C_{41}]\|^2_{H^s} &\lesssim \sum_{k}|k|^{2s}\Big(\sum_{k=k_1+k_2+k_3}|k|\Lambda^{-1}\Big(\sum_{k_1=j_1+j_2+j_3}|v_{j_1}v_{j_2}v_{j_3}|\Big)|v_{k_2}v_{k_3}|
\Big)^2\\
&\lesssim\sum_{k}|k|^{2s}\sum_{k_2,k_3}|k||v_{k_2}|^2|v_{k_3}|^2\sum_{k_2,k_3}\Big(\sum_{k-k_2-k_3=j_1+j_2+j_3}\Lambda^{-1}|v_{j_1}v_{j_2}v_{j_3}|\Big)^2
\\
&\lesssim \sum_{k}|k|^{2s}\sum_{k_2,k_3}|k|^{4s}|v_{k_2}|^{2}|v_{k_3}|^2
\sum_{k_1=j_1+j_2+j_3}|j_2||j_3|\sum_{j_2,j_3}|v_{j_1}|^2|v_{j_2}|^2|v_{j_3}|^2
\\
&\lesssim \sum_{k}|k|^{2s}\sum_{k_1,k_2,k_3}
|k_2|^{2s}|{k_3}|^{2s}|v_{k_2}|^2|v_{k_3}|^2\sum_{j_1,j_2,j_3} |j_2| |j_3| |\hat{v}_{j_1}|^2 |\hat{v}_{j_2}|^2 |\hat{v}_{j_3}|^2 \\
&\lesssim\sum_{j_1,j_2,j_3,k_2,k_3}|j_1|^{2s}|j_2|^{2s}|j_3|^{2s}|v_{j_1}|^2|v_{j_2}|^2|v_{j_3}|^2|k_2|^{2s}|v_{k_2}|^{2}|k_3|^{2s}|v_{k_3}|^2\\
&\lesssim\|v_1\|^2_{H^s}\|v_2\|^2_{H^s}\|v_3\|^2_{H^s}\|v_4\|^2_{H^s}\|v_5\|^2_{H^s}.
\end{align*}

 \textbf{Case $s > 1/2$ :} For $(k_1,k_2,k_3)\in\Gamma_2(k)$, we have $|\tilde{\phi}|\gtrsim|k_m|^2$.  Using Kato-Ponce inequality in Lemma \ref{7-1}, we obtain
\begin{align*}
\|\mathcal{F}_{k} [C_{42}(v)]\|_{H^s}&\lesssim \Big\|\sum_{\Gamma_2(k)}|k||k_1||k_m|^{-2}\hat{v}_{j_1}\hat{v}_{j_2}\hat{v}_{j_3}\hat{v}_{k_2}\hat{v}_{k_3}\Big\|_{H^s}\\
&\lesssim \Big\|\sum_{\Gamma_2(k)}\hat{v}_{k_1}\hat{v}_{k_2}\hat{v}_{k_3}\hat{v}_{k_4}\hat{v}_{k_5}\Big\|_{H^s}\lesssim\big\|\prod_{i=1}^5v_i\big\|_{H^s}\\
&\lesssim \|v_1\|_{H^s}\|v_2\|_{H^s}\|v_3\|_{H^s}\|v_4\|_{H^s}\|v_5\|_{H^s}.
\end{align*}
\textbf{Case $s = 1/2$ :} For $(k_1,k_2,k_3)\in\Gamma_2(k)$, we consider $|\tilde{\phi}|\gtrsim|k_m|^2\lambda$, where $\lambda=\lambda_k=:\min\{|k_1+k_2|,|k_2+k_3|,|k_1+k_3|\}$.
By duality, it suffices to estimate
\begin{align}
|\langle \mathcal{F}_{k}[ C_{42}(v)], h\rangle| \lesssim \|v_1\|_{{H}^{1/2}} \|v_2\|_{{H}^{1/2}} \|v_3\|_{{H}^{1/2}} \|v_4\|_{{H}^{1/2}} \|v_5\|_{{H}^{1/2}}\|h\|_{H^{-{1/2}}},
\end{align}
where \( h \) is an arbitrary element in \( {H}^{-1/2} \).

We further decompose $\Gamma_2(k)$ into two parts by Lemma \ref{5}, i.e., $\Gamma_2(k) = \Gamma_{21}(k) \cup \Gamma_{22}(k)$.

For $(k_1,k_2,k_3)\in \Gamma_{21}(k)$, we have $|k_m|^2\lesssim |\tilde{\phi}|\ll|k_m|^{\frac{15}{7}}$.
Without loss of generality, we may assume
$\lambda=|k_2+k_3|\leq\min\{|k_1+k_3|,|k_1+k_2|\}$, and $|k_2+k_3|\ll||k_m|^{5/7}$, then we have $|k_2+k_3|\gtrsim |k_m|^{\frac{4}{7}}$.
Then we have
\begin{align*}
|\langle \mathcal{F}_{k}[ C_{42}(v)], h\rangle| &\lesssim \sum_k\sum_{\Gamma_{21}(k)} \frac{|k||k_1|}{|k_m|^2\lambda}|\hat{v}_{j_1}||\hat{v}_{j_2}||\hat{v}_{j_3}||\hat{v}_{k_2}||\hat{v}_{k_3}||\hat{h}_k|\\
&\lesssim \sum_k\sum_{j_1+j_2+j_3+k_2+k_3=k} \lambda^{-1}|\hat{v}_{j_1}||\hat{v}_{j_2}||\hat{v}_{j_3}||\hat{v}_{k_2}||\hat{v}_{k_3}||\hat{h}_k|.
\end{align*}
 Setting below \( \widehat{{\tilde{v}}}_{j_1}= |j_1|^{1/2}\hat{{\tilde{v}}}_{j_1}  \), \( \widehat{\tilde{v}}_{j_2} = |j_2|^{-\delta} \hat{v}_{j_2} \), \( \widehat{\tilde{v}}_{j_3} = |j_3|^{-\delta}\hat{v}_{j_3}  \), \( \widehat{\tilde{v}}_{k_2} = |k_2|^{-\delta}\hat{v}_{k_2}  \), \( \widehat{\tilde{v}}_{k_3} =  |k|^{-\delta}\hat{v}_{k_3} \) and \( \widehat{\tilde{h}}_k = |k|^{-{1/2}} \hat{h}_k  \).
 Without loss of generality, assume
\begin{align*}
|j_1| &= \max(|j_1|, |j_2|, |j_3|) > |k_1|,\\
|k_1| &=|k_m| = \max(|k_1|, |k_2|, |k_3|, |k|).
\end{align*}
For sufficiently small $\delta>0,$ this yields that
\begin{align}
|\langle \mathcal{F}_{k}[ C_{42}(v)], h\rangle|
 &\lesssim \sum_k\sum_{j_1,j_2,j_3,k_2,k_3}\frac{|k|^{{1}/{2}}|j_2|^{\delta}|j_3|^{\delta}|k_2|^{\delta}|k_3|^{\delta}}{|j_1|^{{1}/{2}}|k_2+k_3|}
 |\widehat{{\tilde{v}}}_{j_1}||\widehat{{\tilde{v}}}_{j_2}||\widehat{{\tilde{v}}}_{j_3}||\widehat{{\tilde{v}}}_{k_2}|
 |\widehat{{\tilde{v}}}_{k_3}||\widehat{\tilde{h}}_k|\nonumber\\
&\lesssim  \sum_k\sum_{j_1,j_2,j_3,k_2,k_3}\frac{|k|^{{1}/{2}}|k_m|^{2\delta}}{|j_1|^{{1}/{2}-2\delta}|k_2+k_3|}
|\widehat{{\tilde{v}}}_{j_1}||\widehat{{\tilde{v}}}_{j_2}||\widehat{{\tilde{v}}}_{j_3}||\widehat{{\tilde{v}}}_{k_2}|
|\widehat{{\tilde{v}}}_{k_3}||\widehat{\tilde{h}}_k|\nonumber\\
&=
\sum_k\sum_{{\substack{k=k_1+k_2+k_3\\ k_1=j_1+j_2+j_3}}}\frac{|k_m|^{4\delta}}{|k_2+k_3|}
|\widehat{{\tilde{v}}}_{j_1}||\widehat{{\tilde{v}}}_{j_2}||\widehat{{\tilde{v}}}_{j_3}||\widehat{{\tilde{v}}}_{k_2}|
|\widehat{{\tilde{v}}}_{k_3}||\widehat{\tilde{h}}_k|\nonumber\\
&\lesssim
 \|{\tilde{v}}_{1}\|_{L^2}\|\tilde{h}\|_{L^2}\|\tilde{v}_2\|_{L^{\infty}}\|\tilde{v}_3\|_{L^{\infty}}\|\tilde{v}_4\|_{L^{\infty}}
 \|\tilde{v}_5\|_{L^{\infty}}.\label{eq6}
\end{align}
To complete the estimate, we analyze the multiplier coefficient:
\begin{align*}
M = \frac{|k_m|^{4\delta}}{|k_2 + k_3|} \lesssim |k_m|^{4\delta - 4/7} \leq 1,
\end{align*}
where the inequality holds because $|k_2 + k_3| \gtrsim |k_m|^{4/7}$ and by choosing $\delta < 1/7$.

Additionally, the modified Fourier coefficients satisfy the Sobolev embeddings:
\begin{align*}
\| \tilde{v}_1 \|_{L^2} &= \| |j_1|^{1/2} \hat{v}_{j_1} \|_{L^2} \lesssim \| v_1 \|_{H^{1/2}}, \\
\| \tilde{h} \|_{L^2} &= \| |k|^{-1/2} \hat{h}_k \|_{L^2} \lesssim \| h \|_{H^{-1/2}}, \\
\| \tilde{v}_i \|_{L^\infty} &\lesssim \| |\cdot|^{-\delta} \hat{v}_{i,\cdot} \|_{L^1} \lesssim \| v_i \|_{H^{1/2}} \quad (i=2,3,4,5).
\end{align*}

Combining these estimates with \eqref{eq6} yields that
\begin{align*}
|\langle \mathcal{F}[C_{42}(v)], h\rangle|
\lesssim \| v_1 \|_{H^{1/2}} \| h \|_{H^{-1/2}} \prod_{i=2}^5 \| v_i \|_{H^{1/2}} = \prod_{i=1}^5 \| v_i \|_{H^{1/2}} \| h \|_{H^{-1/2}}.
\end{align*}

For $(k_1,k_2,k_3)\in \Gamma_{22}(k)$, we have $|\tilde{\phi}|\gtrsim|k_m|^{{15}/{7}}$.
Then we have
\begin{align*}
|\langle \mathcal{F}_{k}[ C_{42}(v)], h\rangle| &\lesssim \sum_k\sum_{\Gamma_{22}(k)} \frac{|k||k_1|}{|k_m|^{{15}/{7}}}|\hat{v}_{j_1}||\hat{v}_{j_2}||\hat{v}_{j_3}||\hat{v}_{k_2}||\hat{v}_{k_3}||\hat{h}_k|\\
&\lesssim
\sum_{j_1,j_2,j_3,k_2,k_3} |k_m|^{-1/7}|\hat{v}_{j_1}||\hat{v}_{j_2}||\hat{v}_{j_3}||\hat{v}_{k_2}||\hat{v}_{k_3}||\hat{h}_k|\\
&\lesssim \sum_{j_1,j_2,j_3,k_2,k_3}\frac{|k|^{{1}/{2}}|j_2|^{\delta}|j_3|^{\delta}|k_2|^{\delta}|k_3|^{\delta}}{|j_1|^{{1}/{2}}|k_m|^{-1/7}}
 |\widehat{{\tilde{v}}}_{j_1}||\widehat{{\tilde{v}}}_{j_2}||\widehat{{\tilde{v}}}_{j_3}||\widehat{{\tilde{v}}}_{k_2}|
 |\widehat{{\tilde{v}}}_{k_3}||\widehat{\tilde{h}}_k|\\
&\lesssim
\sum_{j_1,j_2,j_3,k_2,k_3}|k_m|^{-1/7+4\delta} |\widehat{{\tilde{v}}}_{1,j_1}||\widehat{{\tilde{v}}}_{2,j_2}||\widehat{{\tilde{v}}}_{3,j_3}||\widehat{{\tilde{v}}}_{4,k_2}|
 |\widehat{{\tilde{v}}}_{5,k_3}||\widehat{\tilde{h}}_k|\\
 &\lesssim \||k|^{1/2}v_1\|_{L^2}\prod_{i=2}^5\||k|^{-\delta}v_i\|_{L^1}\||k|^{-1/2}h\|_{L^2}\lesssim \prod_{i=1}^5\|v_i\|_{H^{1/2}}\|h\|_{H^{-1/2}}.
\end{align*}

This completes the proof of Lemma \ref{8-4}.
\end{proof}
We return to the analysis of the local well-posedness of \eqref{8-1}.
For the term \(\mathcal{R}(v)\) with \(s \geq 1/2\),
\begin{align*}
\| \mathbb{P}_{>N}e^{t\varepsilon\partial_x^2}\mathcal{R}(v) \|_{H^s}
&\lesssim \Big\|\sum_{k_1\neq 0} |k|^s \mathcal{F}_{k}[e^{t\varepsilon\partial_x^2}v] \cdot |k| \mathcal{F}_{-k}[e^{t\varepsilon\partial_x^2}v] \cdot \mathcal{F}_{k}[e^{t\varepsilon\partial_x^2}v] \Big\|_{L^2}\\
&\lesssim \|e^{t\varepsilon\partial_x^2}v\|_{H^s} \Big\| (|k|^{1/2}\mathcal{F}_k[e^{t\varepsilon\partial_x^2}v]) \ast (|k|^{1/2}\mathcal{F}_k[e^{t\varepsilon\partial_x^2}v]) \Big\|_{L^{\infty}} \\
&\lesssim \|e^{t\varepsilon\partial_x^2}v\|_{H^s} \|e^{t\varepsilon\partial_x^2}v\|^2_{H^{1/2}} \lesssim \|e^{t\varepsilon\partial_x^2}v\|^3_{H^s}.
\end{align*}

For the nonlinear terms \(\mathcal{N}_1(v)\) and \(\mathcal{N}_2(v)\) defined in \eqref{9.1}  and \eqref{9.2}, we have the bounds
\begin{align*}
|e^{t\varepsilon\partial_x^2}\mathcal{N}_1(v)|
&\lesssim \sum_{k_1+k_2+k_3=k} |k| |\tilde{\phi}|^{-1} |\mathcal{F}_{k_1}[e^{t\varepsilon \partial_x^2}v] \mathcal{F}_{k_2}[e^{t\varepsilon \partial_x^2}v] \mathcal{F}_{k_3}[e^{t\varepsilon \partial_x^2}v]|,\\
|e^{t\varepsilon\partial_x^2}\mathcal{N}_2(v)|
&\lesssim \sum_{\substack{k_1+k_2+k_3=k \\ j_1+j_2+j_3=k_1}} \frac{|k||k_1|}{|\tilde{\phi}|} |\mathcal{F}_{j_1}[e^{t\varepsilon \partial_x^2}v] \mathcal{F}_{j_2}[e^{t\varepsilon \partial_x^2}v] \mathcal{F}_{j_3}[e^{t\varepsilon \partial_x^2}v] \mathcal{F}_{k_2}[e^{t\varepsilon \partial_x^2}v] \mathcal{F}_{k_3}[e^{t\varepsilon \partial_x^2}v]|.
\end{align*}
Applying Lemmas \ref{0-6} and \ref{8-4}, we obtain the high-frequency estimates
\begin{align*}
\| \mathbb{P}_{>N}e^{t\varepsilon\partial_x^2}\mathcal{N}_1(v) \|_{H^s} &\lesssim N^{-1/2} \big\|e^{t\varepsilon\partial_x^2}v\big\|^3_{H^{s}},\\
\| \mathbb{P}_{>N}e^{t\varepsilon\partial_x^2}\mathcal{N}_2(v) \|_{H^s} &\lesssim \big\|e^{t\varepsilon\partial_x^2}v\big\|^5_{H^{s}}.
\end{align*}
Combining these estimates, we derive the inequality for the high-frequency component:
\begin{align*}
\|\mathbb{P}_{> N}e^{t\varepsilon\partial_x^2}v(t)\|_{H^s}
\lesssim \|v(0)\|_{H^s} + N^{-{1}/{2}} 
\big\|e^{t\varepsilon\partial_x^2}v
\big\|^3_{L^\infty_t H^s_x} + t \left( \big\|e^{t\varepsilon\partial_x^2}v\big\|^3_{L^\infty_t H^s_x} + \big\|e^{t\varepsilon\partial_x^2}v\big\|^5_{L^\infty_t H^s_x} \right).
\end{align*}

For the low-frequency component (\(|k| \leq N\)), by Lemma \ref{7-1} this yields
\begin{align*}
&\Big\| \mathbb{P}_{\leq N} e^{-t\varepsilon k^2} \sum_{k_1+k_2+k_3=k} ik e^{-i\tau Q_2(k,k_1,k_2,k_3)} v_{k_1} v_{k_2} v_{k_3} \Big\|_{H^s} \\
&\lesssim \Big\| \sum_{k_1+k_2+k_3=k} |k|^{s+1} \mathcal{F}_{k_1}[e^{t\varepsilon\partial_x^2}v] \mathcal{F}_{k_2}[e^{t\varepsilon\partial_x^2}v] \mathcal{F}_{k_3}[e^{t\varepsilon\partial_x^2}v] \Big\|_{L^2} \\
&\lesssim N \Big\| \sum_{k_1+k_2+k_3=k} |k|^{s} \mathcal{F}_{k_1}[e^{t\varepsilon\partial_x^2}v] \mathcal{F}_{k_2}[e^{t\varepsilon\partial_x^2}v] \mathcal{F}_{k_3}[e^{t\varepsilon\partial_x^2}v] \Big\|_{L^2} \\
&\lesssim N \ln N \big\|e^{t\varepsilon\partial_x^2}v\big\|^3_{H^s}\lesssim N^2 \big\|e^{t\varepsilon\partial_x^2}v\big\|^3_{H^s}, \quad \text{for} \>s\geq 1/2.
\end{align*}
Thus for \(|k| \leq N\) and \(s \geq 1/2\), we have
\begin{align*}
\|\mathbb{P}_{\leq N}e^{t\varepsilon\partial_x^2}v(t)\|_{H^s}
\lesssim \|v(0)\|_{H^s} + t N^2 \big\|e^{t\varepsilon\partial_x^2}v\big\|^3_{L^\infty_t H^s_x}.
\end{align*}
Combining the low and high-frequency estimates on $[0,T]$, we obtain
\begin{align*}
\|e^{t\varepsilon\partial_x^2} v \|_{L^\infty_t H_x^s}
&\lesssim \|v(0)\|_{H^s} + (N^{-{1}/{2}} + T N^2) \left( \big\|e^{t\varepsilon\partial_x^2}v\big\|^3_{L^\infty_t H^s_x} + \big\|e^{t\varepsilon\partial_x^2}v\big\|^5_{L^\infty_t H^s_x} \right).
\end{align*}
Using the relation \(v(t,x) = e^{t(\partial_x^3-\varepsilon\partial_x^2)}u(t,x)\), we translate this to the original solution
\begin{align*}
\|u\|_{L^\infty_t H^s}
&\lesssim \|u(0)\|_{H^s} + (N^{-{1}/{2}} + T N^2) \left( \big\|u\big\|^3_{L^\infty_t H^s_x} + \big\|u\big\|^5_{L^\infty_t H^s_x} \right).
\end{align*}
 Choosing \(N = T^{-{2}/{5}}\) to balance the terms, satisfying \(T N^2 = N^{-1/2}\) for \(t \in [0,T]\), then
\begin{align*}
\|u\|_{L^\infty_t H^s}
&\leq C \|u(0)\|_{H^s} + C T^{{1}/{5}} \|u\|_{L^\infty_t H^s_x} \left( \big\|u\big\|^2_{L^\infty_t H^s_x} + \big\|u\big\|^4_{L^\infty_t H^s_x} \right).
\end{align*}
To close the estimate, we select \(T > 0\) such that
\begin{align*}
C T^{{1}/{5}} \|u_0\|^2_{H^s} &\leq {1}/{8},\quad C T^{{1}/{5}} \|u_0\|^4_{H^s} \leq {1}/{8},
\end{align*}
which implies that
\begin{align}
C T^{{1}/{5}} \big\|u\big\|^2_{L^\infty_t H^s_x} \leq {1}/{4}, \quad
C T^{{1}/{5}} \big\|u\big\|^4_{L^\infty_t H^s_x} \leq {1}/{4}. \label{eq2}
\end{align}
This yields the uniform bound
\begin{align*}
\|u\|_{L^\infty_t H^s} \leq 2C \|u_0\|_{H^s} \lesssim \|\phi\|_{H^s},
\end{align*}
where the implicit constant is independent of \(\varepsilon\). This establishes local well-posedness with continuous dependence on initial data.

\section{Limit behavior for mKdV-B equation}
Finally, we prove the inviscid limit behavior in $H^s$. It is well known that mKdV equation \eqref{8-2} is completely integrable and has infinite conservation laws, and as a corollary one obtains that let $v$ be a smooth solution to \eqref{8-2} with initial data $v_0$, $k\in \mathbb{Z}_{+}$, we have
\begin{align*}
\sup_{t\in\mathbb{R}}\|v(t)\|_{H^k}\lesssim \|v_0\|_{H^{k}}.
\end{align*}
Let $u$ be a smooth solution of the mKdV-Burgers equation \eqref{8-1} with initial data $\phi$. However, there are less symmetries for \eqref{8-1}. We will expect that the $H^k$ norm of the solution remains dominated for a finite time $T>0$, since the dissipative term behaves well for $t>0$. Now we prove foe for $k=2$ which will suffice for our purpose.

We need the following variant of the Kato-Ponce commutator estimate:
\begin{align}\label{99-4}
\|[\partial_x, f]g\|_{L^2}\lesssim \|f_x\|_{L^{\infty}}\|g\|_{H^{s-1}}+\|f\|_{H^s}\|g\|_{L^{\infty}},\quad s>1/2.
\end{align}
Taking the $H^k$-scalar product with $u$ and using the Kato-Ponce commutator estimate we get
\begin{align*}
\frac{d}{dt}\|u(t)\|_{H^k}^2 &= 2\int_{\mathbb{T}} J_x^k u_t J_x^k u dx \\
&= 2\int_{\mathbb{T}} J_x^k\Big(-\partial_x^{3}u +\varepsilon \partial_x^{2} u + \partial_x
(u^3)\Big)J_x^k u dx\\
&= 2\int_{\mathbb{T}} \varepsilon J_x^k \partial_x^{2} u J_x^k u dx
 + 2\int_{\mathbb{T}} J_x^k\partial_x
(u^3)J_x^k u dx\\
&=-2\varepsilon\|\partial_xu\|^2_{H^k}+6\int_{\mathbb{T}} u^2\partial_x(J_x^ku)J_x^k u dx+6\int_{\mathbb{T}} [J_x^k, u^2]\partial_xuJ_x^k u dx\\
&\lesssim -2\varepsilon\|\partial_xu\|^2_{H^k}+\|\partial_x(u^2)\|_{L^{\infty}}\|u\|^2_{H^k}+\|u_x\|_{L^{\infty}}\|u^2\|_{H^k}\|u\|_{H^k}
\end{align*}
Thus we have
$$\frac{d}{dt}\|u(t)\|_{H^k}^2+\varepsilon\|u\|^2_{H^{k+1}}\lesssim \|u\|^4_{H^k},\quad k>3/2.$$
Taking $k=2$ and integrating the differential equation on $[0,T]$, we obtain
\begin{align}\label{99-3}
\sup_{t \in [0,T]} \| u(t) \|_{H^2} + \varepsilon^{1/2} \left( \int_0^T \| u(\tau) \|_{H^3}^2  d\tau \right)^{1/2} \leq C(T, \|\phi\|_{H^2}).
\end{align}

Assume $u$ is a solution to \eqref{8-1} obtained in the last section and $v$ is a solution to \eqref{8-2}, with initial data $\psi_1, \psi_2 \in H^2(\mathbb{T})$ respectively.
Let $w=u-v$, $w_0=u_0-v_0$, then $w$ satisfies
\begin{equation}
\begin{cases}
w_t + w_{xxx} -\varepsilon \partial_x^{2} u = \partial_x(w(v^2 + u^2 + v u)), & t \in \mathbb{R}_+, x \in \mathbb{T}, \\
w(0) = \phi.
\end{cases}
\end{equation}

We first view $\varepsilon \partial_x^{2} u$ as a perturbation to the difference equation of the mKdV equation.
Taking the $H^s$-scalar product with $w$ we get
\begin{align*}
\frac{d}{dt}\|w(t)\|_{H^s}^2 &= 2\int_{\mathbb{T}} J_x^s w_t J_x^s w dx \\
&= 2\int_{\mathbb{T}} J_x^s\Big(-\partial_x^{3}w +\varepsilon \partial_x^{2} u + \partial_x
(w(v^2 + u^2 + vu))\Big)J_x^s w dx\\
&= 2\int_{\mathbb{T}} J_x^s\varepsilon \partial_x^{2} u J_x^s w dx
 + 2\int_{\mathbb{T}} J_x^s\partial_x
(w(v^2 + u^2 + vu))J_x^s w dx:=I_1+I_2.
\end{align*}
The contribution of the first term can be obtained by integration by parts and Young's inequality,
\begin{align}\label{99-5}
I_1\lesssim \|\varepsilon \partial_x^{2+s} u\|_{L^2}\|J_x^s w\|_{L^2}\leq \frac{\varepsilon^2}{2}\|\partial_x^{2+s} u\|^2_{L^2}+\frac{1}{2}\|w\|^2_{H^s}.
\end{align}

Let $U=v^2 + u^2 + vu$.
Integrating by parts and applying the Kato-Ponce commutator estimate in \eqref{99-4}, we easily estimate the second term by
\begin{align*}
I_2&=
2\int_{\mathbb{T}} J_x^s(U\partial_xw)J_x^s w dx+2\int_{\mathbb{T}} J_x^s(w\partial_xU)J_x^s w dx\\
&=2\int_{\mathbb{T}} U\partial_x(J_x^sw)J_x^s w dx+2\int_{\mathbb{T}} [J_x^s, U]\partial_xwJ_x^s w dx
+2\int_{\mathbb{T}} J_x^s(w\partial_xU)J_x^s w dx\\
&= -\int_{\mathbb{T}} \partial_xU (J_x^sw)^2 dx+2\int_{\mathbb{T}} [J_x^s, U]\partial_xwJ_x^s w dx+2\int_{\mathbb{T}} J_x^s(w\partial_xU)J_x^s w dx\\
&\lesssim \|\partial_xU\|_{L^{\infty}}\|w\|^2_{H^s}+\|w\partial_xU\|_{H^s}\|w\|_{H^s}\\
&\lesssim \|w\|^2_{H^s}(\|\partial_xU\|_{L^{\infty}}+\|\partial_xU\|_{H^s}),
\end{align*}
where the last line by Lemma \ref{7-1}, $\|w\partial_xU\|_{H^s}\lesssim \|w\|_{H^s}\|\partial_xU\|_{H^s}.$
Then we obtain
\begin{align}\label{99-6}
I_2\lesssim (\|u\|^2_{H^{s+1}}+\|v\|^2_{H^{s+1}})\|w\|^2_{H^s}, \quad s>1/2.
\end{align}

Combining the inequalities of $I_1$ and $I_2$ in \eqref{99-5} and \eqref{99-6}, and taking $s=1$, we have
\begin{align*}
\frac{d}{dt}\|w(t)\|_{H^1}^2 \lesssim \varepsilon^2\| u\|^2_{L_t^{\infty}H_x^{3}}+\|w\|^2_{L_t^{\infty}H_x^1}
+(\|u\|^2_{L_t^{\infty}H_x^2}+\|v\|^2_{L_t^{\infty}H_x^2})\|w\|^2_{L_t^{\infty}H_x^1},
\end{align*}
 Integrating the differential equation on $[0,T]$,  we obtain the following inequality
 \begin{align*}
\sup_{t\in[0,T]}\|w(t)\|^2_{H^1}&\lesssim \|w_0\|^2_{H^1}+ \varepsilon^2\int_0^T\|u\|^2_{H^3}d\tau+(\|u\|^2_{L_t^{\infty}H_x^2}+\|v\|^2_{L_t^{\infty}H_x^2}+1)
\int_0^T\|w\|^2_{H_x^1}d\tau.
\end{align*}
Since from \eqref{eq2} and Theorem 1.3,
$$T\|u\|^2_{L_t^{\infty}H_x^2}\leq 1/4, \quad T\|v\|^2_{L_t^{\infty}H_x^2}\leq 1/4,$$
and from \eqref{99-3},
\[
\varepsilon^{1/2} \left(\int_0^T \|u\|_{H^3}^2 d\tau\right)^{1/2} \leq C(T, \|\phi\|_{H^2}).
\]
Thus, for  $\psi_1,\psi_2\in H^2$, we immediately get that there exits $T>0$ suitably small such that
\begin{align*}
\|w(t)\big\|_{L_t^{\infty}H_x^1}\lesssim \|\psi_1-\psi_2\|_{H^1}+\varepsilon^{1/2} C(T,\|\psi_1\|_{H^2},\|\psi_2\|_{H^2} ).
\end{align*}

For fixed $T>0$, we need to prove that $\forall\eta>0$, there exists $0<\varepsilon<1,$ then
$$\|{S}_m^{\varepsilon}(\varphi) - {S}_m(\varphi)\|_{C([0,T];H^s)}<\eta.$$
Fixing this $K$ large enough, for $s\geq 1/2$, we get that
\begin{align}\label{8-8}
\|{S}_m^{\varepsilon}(\mathbb{P}_{\leq K}\varphi) - {S}_m(\mathbb{P}_{\leq K}\varphi)\|_{C([0,T];H^s)}\lesssim \varepsilon^{1/2} C(T,K,\|\varphi\|_{H^s} ).
\end{align}
Similar to the uniform continuity of \eqref{00} argument, we easily obtain that for $u_1, u_2\in H^s$, then
\begin{align}
\|u_1 - u_2\|_{H^s} \leq C(T, \|\phi_1\|_{H^s}, \|\phi_2\|_{H^s}) \|\phi_1 - \phi_2\|_{H^s},
\end{align}
Then it follows from the above uniform continuity
\begin{align*}
&\|S_m^{\varepsilon}(\varphi) - S_m^{\varepsilon}(\mathbb{P}_{\leq K}\varphi)\|_{C([0,T];L^2)}\leq \eta/4,\\
&\|S_m(\varphi) - S_m(\mathbb{P}_{\leq K}\varphi)\|_{C([0,T];L^2)}\leq \eta/4.
\end{align*}
Thus from Theorem \ref{8-6} and \eqref{8-8}, we get
\begin{align*}
\|{S}_m^{\varepsilon}(\varphi) - {S}_m(\varphi)\|_{C([0,T];H^s)}
&\leq \|{S}_m^{\varepsilon}(\varphi) - {S}_m^{\varepsilon}(\mathbb{P}_{\leq K}\varphi)\|_{C([0,T];H^s)} \\
&\quad + \|{S}_m^{\varepsilon}(\mathbb{P}_{\leq K}\varphi) - {S}_m(\mathbb{P}_{\leq K}\varphi)\|_{C([0,T];H^s)}\\
&\quad + \|{S}_m(\mathbb{P}_{\leq K}\varphi) - {S}_m(\varphi)\|_{C([0,T];H^s)}\leq \eta.
\end{align*}
The proof of Theorem \ref{8-5} is completed.
\section*{Appendix}

\setcounter{thm}{0}
\setcounter{equation}{0}
\renewcommand{\theremark}{A.\arabic{remark}}
\renewcommand{\theequation}{A.\arabic{equation}}
\renewcommand{\thethm}{A.\arabic{thm}}

Assume $u_{\varepsilon}$ is a solution to \eqref{00} and $u$ is a solution to \eqref{1.4}. This appendix provides the detailed proof for the estimate of the term $\int_0^t\int_{\mathbb{T}}\partial_x u \cdot w^2  dx  ds$ referenced in Section 4 of the main text. We establish the following inequality:
$$\int_0^t\int_{\mathbb{T}}\partial_xu\cdot w^2dxds\lesssim \varepsilon^2\int_0^T \|\partial_xu_{\varepsilon}(\tau)\|^2_{L^2}d\tau+C(T, \|u\|_{L^2},\|u_{\varepsilon}\|_{L^2})\|w\|^2_{L^2}.$$

 As H\"{o}lder and Young's inequalities are insufficient for our purpose, we develop an alternative approach through variable transformations.

Define $h = e^{t \partial_x^3} u$, $v = e^{t (\partial_x^3 - \varepsilon \partial_x^2)} u_\varepsilon$, $z = e^{t \partial_x^3} w$,
we have the following expressions:
\begin{align}
\partial_t h &=  e^{t \partial_x^3} \partial_x \big(e^{-t \partial_x^3} h\big)^2,\quad
\partial_t v = e^{t (\partial_x^3 - \varepsilon \partial_x^2)} \partial_x \big(e^{-t (\partial_x^3 - \varepsilon \partial_x^2)} v\big)^2,\label{1.1}\\
\partial_t z
&=  \varepsilon \partial_x^2\big(e^{t \varepsilon\partial_x^2}v\big)
+ e^{t\partial_x^3 } \partial_x \Big(\big(e^{-t (\partial_x^3 - \varepsilon \partial_x^2)} v
+e^{-t\partial_x^3} h\big)\cdot e^{-t\partial_x^3} z\Big).\label{1.2}
\end{align}
We note that
\begin{align*}
\int_{\mathbb{T}} \partial_x u w^2 \, dx&= \sum_{k_1 + k_2 + k_3 = 0 } ik_1\hat{u}_{k_1} \hat{w}_{k_2}  \hat{w}_{k_3} = \sum_{k_1 + k_2 + k_3 = 0 } ik_1 e^{it(k_1^3+k_2^3+k_3^3)} \hat{h}_{k_1} \hat{z}_{k_2} \hat{z}_{k_3} \\
&= \sum_{k_1 + k_2 + k_3 = 0 } ik_1 e^{3itk_1k_2k_3} \hat{h}_{k_1} \hat{z}_{k_2} \hat{z}_{k_3}.
\end{align*}
Integration by parts since $\partial_t(\frac{e^{3it k_1k_2k_3}}{3ik_1k_2k_3})$,  we have
\begin{align*}
&\int_0^t\int_{\mathbb{T}} \partial_x u w^2 \, dx ds\\
&=\sum_{k_1 + k_2 + k_3 = 0} \frac{e^{3isk_1k_2k_3}}{3k_2k_3} \hat{h}_{k_1} \hat{z}_{k_2} \hat{z}_{k_3}\Big|^{s=t}_{s=0}\\
&\quad-\int_0^t\sum_{k_1 + k_2 + k_3=0} \frac{e^{3is k_1k_2k_3}}{3k_2k_3} \Big(\partial_s\hat{h}_{k_1}\hat{z}_{k_2} \hat{z}_{k_3}
+\partial_s\hat{z}_{k_2}\hat{h}_{k_1} \hat{z}_{k_3}+\partial_s\hat{z}_{k_3}\hat{h}_{k_1}\hat{z}_{k_2} \Big)ds\\
&=: L_1+L_2+L_3+L_4.
\end{align*}
By H\"{o}lder inequality, we have
\begin{align}\label{00-8}
|L_1|\lesssim \|h\|_{L^2}\|\partial_x^{-1}z\|_{L^2}\|\partial_x^{-1}z\|_{L^{\infty}}
\lesssim \|h\|_{L^2}\|z\|^2_{L^2}=\|u\|_{L^2}\|w\|^2_{L^2}.
\end{align}
For $L_2$, using the definition of $\partial_s\hat{h}$ from equation \eqref{1.1}, and $k_1=-k_2-k_3$, we have
\begin{align*}
L_2
&=-\int_0^t\sum_{k_1 + k_2 + k_3=0} \frac{e^{3is k_1k_2k_3}}{3k_2k_3} \sum_{k_1=k_{\alpha}+k_{\beta}}ik_1 e^{-is(k_1^3-k_{\alpha}^3-k_{\beta}^3)}\hat{h}_{k_{\alpha}}\hat{h}_{k_{\beta}}\hat{z}_{k_2} \hat{z}_{k_3}ds\\
&=\int_0^t\sum_{k_{\alpha}+k_{\beta}+ k_2 + k_3=0} \frac{i(k_2+k_3)e^{-3is \phi
_1}}{3k_2k_3}\hat{h}_{k_{\alpha}}\hat{h}_{k_{\beta}}\hat{z}_{k_2} \hat{z}_{k_3}ds\\
&=\frac{i}{3}\int_0^t\sum_{k_{\alpha}+k_{\beta} + k_2 + k_3 = 0}
e^{-3is \phi_1}\left(\frac{1}{k_2}+\frac{1}{k_3}\right)\hat{h}_{k_{\alpha}}\hat{h}_{k_{\beta}}\hat{z}_{k_2} \hat{z}_{k_3}ds,
\end{align*}
where $\phi_1=-(k_{\alpha}+k_{\beta})k_2k_3+k_{\alpha}k_{\beta}k_1=(k_{\alpha}+k_{\beta})(k_{\alpha}+k_{3})(k_{\beta}+k_3).$

 Renaming the variables $k_1=k_{\alpha}$, $k_2=k_{\beta}$, $k_3=k_2$, $k_4=k_3$.
Since $k_1, k_2, k_3, k_4$ are symmetric, we have the following relation:
\begin{align}\label{0.02}
\frac{1}{k_1}+ \frac{1}{k_2}+ \frac{1}{k_3}+ \frac{1}{k_4}=\frac{-\phi_1}{k_1k_2k_3k_4}.
\end{align}

If $\phi_1=0$, we work with mean zero initial data for KdV, which implies $\mathbb{P}_0v=0$. Consequently, we have $k_{\alpha}+k_{\beta}\neq 0$, leading to $(k_{\alpha}+k_{3})(k_{\beta}+k_3)=0$. Therefore
\begin{align*}
|L_2|&\lesssim \int_0^t\sum_{k_{\alpha}+k_{\beta} + k_2 + k_3 = 0}\hat{h}_{k_{\alpha}}\hat{h}_{k_{\beta}}\hat{z}_{k_2} \hat{z}_{k_3}ds\\
&\lesssim\int_0^t\mathcal{F}_0[h z]\mathcal{F}_0[h z]ds\lesssim t\|h\|^2_{L^2}\|z\|^2_{L^2}\lesssim t\|u\|^2_{L^2}\|w\|^2_{L^2}.
\end{align*}
If $\phi_1\neq0$, we can use integration by parts since $\partial_t(\frac{ie^{-3is \phi_1}}{3\phi_1})$, then $L_2$ can be rewritten as
\begin{align*}
L_2&=\int_0^t\sum_{k_1 + k_2 + k_3 + k_4 = 0} \frac{e^{-3is \phi_1}}{6i}
\frac{\phi_1}{k_1k_2k_3k_4}\hat{h}_{k_1}\hat{h}_{k_2}\hat{z}_{k_3} \hat{z}_{k_4}ds\nonumber\\
&=\frac{1}{18}\sum_{k_1 + k_2 + k_3 + k_4 = 0} \frac{e^{-3is \phi_1}}{k_1k_2k_3k_4}\hat{h}_{k_1}\hat{h}_{k_2}\hat{z}_{k_3} \hat{z}_{k_4}\Big|_{0}^t\nonumber\\
&\quad-\frac{1}{18}\int_0^t\sum_{k_1 + k_2 + k_3 + k_4 = 0}\frac{e^{-3is \phi_1}}{k_1k_2k_3k_4}\Big(
\partial_{s}\hat{h}_{k_1}\hat{h}_{k_2}\hat{z}_{k_3} \hat{z}_{k_4}+\partial_{s}\hat{h}_{k_2}\hat{h}_{k_1}\hat{z}_{k_3} \hat{z}_{k_4}\\&\qquad\qquad\qquad\qquad\qquad\qquad\quad+
\partial_{s}\hat{z}_{k_3}\hat{h}_{k_1}\hat{h}_{k_2} \hat{z}_{k_4}+
\partial_{s}\hat{z}_{k_4}\hat{h}_{k_1}\hat{h}_{k_2} \hat{z}_{k_3} \Big)ds\\
&=:L_2^1+L_2^2+L_2^3+L_2^4+L_2^5.
\end{align*}
By H\"{o}lder inequality, we have
\begin{align*}
|L_2^1|\lesssim \|h\|^2_{L^2}\|z\|^2_{L^2}=\|u\|^2_{L^2}\|w\|^2_{L^2}.
\end{align*}
From the definition of $\partial_t\hat{h}_k$ in \eqref{1.1}, we can derive
\begin{align*}\|\partial_x^{-1}\partial_th\|_{L^1}\lesssim \|e^{-t \partial_x^3}h\cdot e^{-t \partial_x^3}h\|_{L^1}\lesssim \|h\|^2_{L^2}=\|u\|^2_{L^2}.
\end{align*}
Therefore,
\begin{align*}
&|L_{2}^2+L_{2}^3|
\lesssim |L_{2}^2|+|L_{2}^3|\lesssim t\|\partial_x^{-1}\partial_th\|_{L^1}\|\partial_x^{-1}h\|_{L^{\infty}}
\|\partial_x^{-1}z\|^2_{L^{\infty}}\lesssim t\|u\|^3_{L^2}\|w\|^2_{L^2}.
\end{align*}
Substituting the expression for $\partial_t\hat{z}_k$ from \eqref{1.2} into $L_{2}^4$,
\begin{align*}
L_{2}^4&=\frac{1}{18}\int_0^t\sum_{k_1 + k_2 + k_3 + k_4 = 0}\frac{e^{-3is \phi_1}}{k_1k_2k_4} \varepsilon k_3e^{-t\varepsilon k_3^2}\hat{v}_{k_3}\hat{h}_{k_1}\hat{h}_{k_2}\hat{z}_{k_4}ds\\
&\quad-\frac{1}{36}\int_0^t\sum_{k_1 + k_2 + k_{\alpha}+k_{\beta}+k_4 =0}\frac{ ie^{-3is \phi_2}}{k_1k_2k_4} \Big(e^{-t\varepsilon k_{\alpha}^2}\hat{v}_{k_{\alpha}}+ \hat{h}_{k_{\alpha}}\Big)\hat{z}_{k_{\beta}}\hat{h}_{k_1}\hat{h}_{k_2} \hat{z}_{k_4}ds,
\end{align*}
where $\phi_2=\phi_1+k_3k_{\alpha}k_{\beta}$.
Then
\begin{align*}
|L_{2}^4|+|L_{2}^5|
&\lesssim t\varepsilon\|\partial_xu_{\varepsilon}\|_{L^2}\|u\|^2_{L^2}\|w\|_{L^2}+t(\|u_{\varepsilon}\|_{L^2}+\|u\|_{L^2})\|u\|^2_{L^2}\|w\|^2_{L^2}.
\end{align*}
By combining the five inequalities above, we arrive at the following result:
\begin{align}\label{00-9}
|L_2|\lesssim \|u\|^2_{L^2}\|w\|^2_{L^2}+ t\varepsilon\|\partial_xu_{\varepsilon}\|_{L^2}\|u\|^2_{L^2}\|w\|_{L^2}+t(\|u_{\varepsilon}\|_{L^2}+\|u\|_{L^2})\|u\|^2_{L^2}\|w\|^2_{L^2}.
\end{align}

Since $L_3$ is similar to $L_4$, we only estimate one of them. Without loss of generality, we focus on $L_3$ and proceed with the procedure used in the proof of $L_2$.

We substitute the expression for $\partial_s\hat{z}_k$ from \eqref{1.2} into $L_3$, which yields that
\begin{align*}
L_3
&=\int_0^t\sum_{k_1 + k_2 + k_3=0} \frac{e^{3is k_1k_2k_3}}{3k_3} \varepsilon k_2e^{-s\varepsilon k_2^2}\hat{v}_{k_2}\hat{h}_{k_1}\hat{z}_{k_3}ds\nonumber\\
&\quad+\frac{1}{3i}\int_0^t\sum_{k_1 + k_2 + k_3 + k_4 = 0} \frac{e^{-3is \phi_1}}{k_4}\hat{h}_{k_1}\big(e^{-s\varepsilon k_2^2}\hat{v}_{k_2}+ \hat{h}_{k_2}\big)\hat{z}_{k_3}\hat{z}_{k_4}ds.
\end{align*}
Then the second term on the right-hand side of $L_3$ can be bounded by the following expression:
\begin{align*}
&\int_0^t\sum_{k_1 + k_2 + k_3 + k_4 = 0} \frac{e^{-3is \phi_1}}{12i}
\Big(\frac{1}{k_1}+\frac{1}{k_2}+\frac{1}{k_3}+\frac{1}{k_4}\Big)\hat{h}_{k_1}\big(\hat{v}_{k_2}+ \hat{h}_{k_2}\big)\hat{z}_{k_3}\hat{z}_{k_4}ds\nonumber\\
&=-\frac{1}{36}\sum_{k_1 + k_2 + k_3 + k_4 = 0}
\frac{e^{-3is \phi_1}}{k_1k_2k_3k_4}\hat{h}_{k_1}\big(\hat{v}_{k_2}+ \hat{h}_{k_2}\big)\hat{z}_{k_3}\hat{z}_{k_4}\Big|_{0}^t\\
&\quad+\frac{1}{36}\int_0^t\sum_{k_1 + k_2 + k_3 + k_4 = 0}
\frac{e^{-3is \phi_1}}{k_1k_2k_3k_4}\partial_s\Big(\hat{h}_{k_1}\big(\hat{v}_{k_2}+ \hat{h}_{k_2}\big)\hat{z}_{k_3}\hat{z}_{k_4}\Big)ds\\
&=:L_{3}^1+L_{3}^2+L_{3}^3+L_{3}^4+L_{3}^5.
\end{align*}
Following the proof of $L_3$ and by the H\"{o}lder inequality, we have
\begin{align*}
&|L_{3}^1|+|L_{3}^2|+|L_{3}^3|+|L_{3}^4|+|L_{3}^5|\\
&\lesssim  (\|u_{\varepsilon}\|_{L^2}+\|u\|_{L^2})\|u\|_{L^2}\|w\|^2_{L^2}
+\int_0^t\varepsilon \|\partial_xu_{\varepsilon}\|_{L^2}(\|u_{\varepsilon}\|_{L^2}+\|u\|_{L^2})\|u\|_{L^2}\|w\|_{L^2}d\tau\\
&\quad+t(\|u_{\varepsilon}\|^2_{L^2}\|u\|_{L^2}+\|u_{\varepsilon}\|_{L^2}\|u\|^2_{L^2}+\|u\|^3_{L^2})\|w\|^2_{L^2}.
\end{align*}
Then we obtain
\begin{align}
|L_3|&\lesssim t\varepsilon \|\partial_xu_{\varepsilon}\|_{L_t^{\infty}L_x^2}\|u\|_{L_t^{\infty}L_x^2}\|w\|_{L_t^{\infty}L_x^2}
+|L_{3}^1|+|L_{3}^2|+|L_{3}^3|+|L_{3}^4|+|L_{3}^5|\nonumber\\
&\lesssim \int_0^t\varepsilon^2\|\partial_xu_{\varepsilon}\|^2_{L^2}d\tau
+t(\|u_{\varepsilon}\|^2_{L_t^{\infty}L_x^2}\|u\|_{L_t^{\infty}L_x^2}+\|u_{\varepsilon}\|_{L_t^{\infty}L_x^2}\|u\|^2_{L_t^{\infty}L_x^2}
+\|u\|^3_{L_t^{\infty}L_x^2})\|w\|^2_{L_t^{\infty}L_x^2}\nonumber\\
&\quad+(\|u_{\varepsilon}\|_{L_t^{\infty}L_x^2}\|u\|_{L_t^{\infty}L_x^2}+\|u\|_{L_t^{\infty}L_x^2}^2)\|w\|^2_{L_t^{\infty}L_x^2}.\label{00-10}
\end{align}
Combining the results for $L_1$, $L_2$, $L_3$ and $L_4$ from \eqref{00-8}, \eqref{00-9} and \eqref{00-10}, we derive
\begin{align*}
&\int_0^t\int_{\mathbb{T}} \partial_x u w^2 \, dx ds
\lesssim \varepsilon^2\int_0^T \|\partial_xu_{\varepsilon}(\tau)\|^2_{L^2}d\tau+C(T, \|u\|_{L^2},\|u_{\varepsilon}\|_{L^2})\|w\|^2_{L^2}.
\end{align*}

\section*{Acknowledgments}
The authors would like to thank Professor Yifei Wu for helpful discussions and comments. The authors are partially supported by NSFC 12171356.

\end{document}